\documentclass[]{amsart}
\usepackage{xspace}
\usepackage[all]{xy}
\usepackage{amssymb}
\usepackage{stmaryrd}

\usepackage{enumerate}

\theoremstyle{definition}
\newtheorem{definition}{Definition}[section]
\newtheorem{rem}[definition]{Remark}

\theoremstyle{plain}
\newtheorem{lem}[definition]{Lemma}
\newtheorem{prop}[definition]{Proposition}
\newtheorem{thm}[definition]{Theorem}
\newtheorem{cor}[definition]{Corollary}

\newtheorem{ques}[definition]{Question}

\newtheorem{fact}[definition]{Fact}
\newtheorem{cond}[definition]{Conditions}

\newcommand{\RMod}{R\rm{-Mod}}

\newcommand{\RImp}{R\rm{-Imp}}

\newcommand{\ModR}{{\rm Mod-}R}

\newcommand{\Hom}{{\rm Hom}}
\newcommand{\Tor}{{\rm Tor}_1}
\newcommand{\Ext}{{\rm Ext}^1}

\newcommand{\D}{{\rm{D}}}

\newcommand{\tor}{{\rm{t}}}
\newcommand{\s}{{\Theta}}
\newcommand{\T}{{\rm{T}}}

\newcommand{\Sreg}{S_{reg}}
\newcommand{\Div}{{\cal Div}}
\newcommand{\Tf}{\cal T\!f}
\newcommand{\To}{\cal T}

\newcommand{\ann}{{\rm{ann}}}
\newcommand{\rann}{{\mathfrak{r}}}
\newcommand{\lann}{{\mathfrak{l}}}

\newcommand{\cal}{\mathcal}

\renewcommand{\bar}{\overline}

\renewcommand{\phi}{\varphi}

\renewcommand{\to}{\longrightarrow}

\begin{document}

\title{
Torsion-free, divisible, and Mittag-Leffler modules}
\author{Philipp Rothmaler}
\date{August 9, 2012}
\dedicatory{To Leonell}
\thanks{This work was supported in part by PSC-CUNY Award  64595-00 42.} 
\keywords{Mittag-Leffler, pure-projective,  absolutely pure, flat, divisible, torsion-free, and almost projective modules; countably hereditary rings; RD-, B\'ezout, Dedekind, and Ore domains, semi-firs; definable subcategory.}
\subjclass[2010]{16D40, 16B70, 16D80, 16S90} 

\begin{abstract} 
We study (relative) $\cal K$-Mittag-Leffler modules, with emphasis on the class $\cal K$ of absolutely pure modules. A final goal is to describe the $\cal K$-Mittag-Leffler abelian groups as those that are, modulo their torsion part, $\aleph_1$-free, Cor.\ref{AG}. Several more general results of independent interest are derived on the way. In particular, every flat  $\cal K$-Mittag-Leffler module (for $\cal K$ as before) is Mittag-Leffler, Thm.\ref{alreadyML}. A question about the definable subcategories generated by the divisible modules and the torsion-free modules, resp., has been left open, Quest.\ref{ques}.
\end{abstract}

\maketitle

Suppose $R$ is an arbitrary associative ring with $1$ and $M$ a unital left $R$-module. $M$ is said to be \emph{torsion-free} if, for all $r\in R$ and $m\in M$, the annihilation $rm=0$ is possible only if $m$ is a linear combination $\sum r_i m_i$ of certain $m_i\in M$ with coefficients $r_i\in R$ already annihilated by $r$, i.e., such that $r r_i=0$. This may be stated more succinctly thus: $\ann_M(r)\subseteq \rann(r)M$,
i.e., if $rm=0$, then   $\rann(r)|m$---here  $\rann(r)$ denotes the right annihilator of $r$ in $R$. This is the same as the usual definition when no zero-divisors are around.

This definition goes back at least to Hattori, who proved that it is equivalent to  $\Tor(R/rR, M)=0$ for all $r\in R$ \cite[Prop.1]{Hat}.  

Dually, $M$ is said to be \emph{divisible} if a ring element  $r$ divides an element $m\in M$ whenever the left annihilator $\lann(r)$  of $r$ in $R$ annihilates $m$, i.e., if $\lann(r) m=0$ then $r|m$  \cite[Def.3.16]{L}.

This goes back to the same paper where it is proved to be equivalent to  the condition $\Ext(R/Rr, M)=0$ for all $r\in R$ \cite[Prop.$1^\prime$]{Hat}.

The first aim of the paper is to characterize the rings over which these two classes form \emph{definable subcategories}, i.e., classes closed under direct product, direct limit and pure submodule (considered always as full subcategories of the category of all modules).
We prove that the class $\Div=\Div_R$  of divisible right $R$-modules forms a definable subcategory of $\ModR$ if and only if the class  $\Tf$= ${_R\Tf}$ of torsion-free left $R$-modules forms a definable subcategory of $\RMod$ if and only if $R$  is right P-coherent, and, if that is the case, $\Div_R$ and ${_R\Tf}$ are (elementarily) dual to each other (in the sense of Prest and Herzog), Theorem \ref{divtfD}.

This duality implies that, over right P-coherent rings, a left module is Mittag-Leffler with respect to divisible right modules, we say \emph{$\Div$-Mittag-Leffler}, if and only if it is what we call \emph{$\Tf$-atomic}, Corollary \ref{DivML}. $\Tf$-atomicity is a certain finiteness condition on the  finite systems of linear equations that have solutions in the given module; see Sections \ref{F-at} and \ref{KML} for  definitions.  

The final goal of the paper is to characterize $\Div$-Mittag-Leffler abelian groups (and modules over more general rings). We prove that $M$ is  $\Div$-Mittag-Leffler if and only if $M/\T(M)$ is  Mittag-Leffler if and only if it is $\aleph_1$-free, where $\T(M)$ is the torsion part of $M$, Corollary \ref{AG}.
Here is how.

The break-down into  torsion  and  torsion-free cases works for abelian groups, since the torsion part is always a pure subgroup and any class of relativized Mittag-Leffler modules is closed under pure extension---as a matter of fact, over any ring. For the converse, we use that these classes are also closed under pure submodule. What is less straightforward  is whether a class of  relativized Mittag-Leffler modules is closed under taking the largest torsion-free factor module. In other words, if $M$ is, say  $\Div$-Mittag-Leffler, is so also $M/\T(M)$?
  
We pursue this question over a large class of rings for which we are guaranteed that $\T(M)$ is a pure submodule of $M$, namely over semihereditary RD-rings. These are semihereditary rings over which RD-purity (=relative divisibility) is the same as full purity, see below. By  Hattori's work, semiheredity guarantees that $\Tf$ is a torsion-free class of a torsion theory, so that   $M/\T(M)$ is always torsion-free and therefore $\T(M)$  always RD-pure in $M$ (in fact, less suffices,  but for RD-rings there is no difference). The RD property then entails that  $\T(M)$ is, in fact, pure in $M$. 

We will not have to worry about the torsion case much, as, trivially, torsion modules are  $\Div$-Mittag-Leffler, Proposition \ref{torsionisML}---for the simple reason that a divisible module tensored with a torsion module is zero, a fact already noticed by Hattori.

Here is but a few examples of RD-rings: $\mathbb{Z}$, $k[X]$ for any field $k$, serial rings, Pr\"ufer rings, von Neumann regular rings, and the so-called Dedekind prime rings, whose prominent example is the first Weyl algebra $A_1(k)$ over any field $k$ of characteristic $0$.

Recall that over $\mathbb{Z}$, `divisible' is the same as `injective' and as `absolutely pure' (or `fp-injective'). Similarly, `torsion-free' is the same as `flat.'
Denote the class of \emph{flat} left $R$-modules by $\flat$ (or, to be precise, ${_R\flat}$) and that of \emph{absolutely pure} right $R$-modules by $\sharp$ (or $\sharp_R$). We may then restate the aforementioned Corollary \ref{DivML} for $R=\mathbb{Z}$ thus: an abelian group is $\sharp$-Mittag-Leffler if and only if it is $\flat$-atomic. This is actually true over \emph{any} ring, see Fact \ref{MThm}(1).

Over RD-rings the same holds true:  $\Div=\sharp$ and $\Tf=\flat$, and it doesn't matter on which side, as the RD property is two-sided. So we may as well consider $\sharp$-Mittag-Leffler   modules. It  turns out that such modules are already (fully) Mittag-Leffler---provided they are flat, Theorem \ref{alreadyML}. Consequently, over the rings in question, we obtain the final description that $M$ is $\sharp$- (or $\Div$-) Mittag-Leffler if and only if so is $M/\T(M)$, Theorem \ref{thmRD}. In case of  (not necessarily commutative) domains, the RD property entails semi-heredity, 
so we may drop that requirement in our final Theorem \ref{thmRDOre} that describes $\sharp$-Mittag-Leffler modules over RD-Ore domains in terms of almost projectivity of their largest torsion-free factor.

We draw conclusions about the special cases of (not necessarily commutative) Dedekind domains (including the first Weyl algebra, see Remark \ref{3}(1)), B\'ezout domains that are RD and, in particular, one-sided PID's that are RD, Corollary \ref{corBez}, where almost projectivity becomes almost freeness.

We actually restrict our search to  rings over which the torsion theory is hereditary, i.e., over which submodules of torsion modules are torsion. This is the case over RD-domains that are in addition Ore, Lemma \ref{T=t}, for  then, as already noticed by Hattori, we are back in the classical torsion theory. This restriction still includes all noetherian RD-domains and more, while, as a final note, it is not even known if there is any RD-domain that is \emph{not} Ore. 

On the way, in Section \ref{ML}, we collect some general results on flat Mittag-Leffler modules over semi-hereditary rings and, in particular, over von Neumann regular rings and semi-firs, that may be of interest in their own right.

\section{Background}

\subsection{Elementary Duality}\label{elemdual}
We assume the reader familiar with  \emph{elementary duality} as defined by Prest \cite{P}. It constitutes, for a given ring, an anti-isomorphism $\D$ between the lattices of (strictly speaking, equivalence classes of) pp formulas on either side of the ring. We use the same symbol $\D$, no matter which side we apply it to (thus $\D^2=1$ makes sense). The easiest cases are: $D(rx=0)$ is equivalent to the formula $r|x$ (thought of  on the right, i.e., as $\exists y(x=yr)$) and $\D(r|x)$ is equivalent to $rx=0$. On the other side we have that  $\D(xr=0)$ is (equivalent to) $\exists y(x=ry)$ and $\D(\exists y(x=ry))$ is $xr=0$. For the general case and other properties, see  \cite[Sect.1.3]{P2} or \cite{Pre}---or \cite[Ch.1]{habil}, which may serve as a swift introduction to most of the necessary model-theoretic background.

From the anti-isomorphism it is obvious how to extend $\D$ to implications of pp formulas, i.e.,  statements of the form $\forall x (\phi\to \psi)$, where $\phi$ and $\psi$ are pp formulas in the same free variable $x$ (or a tuple of such). Namely, the dual of this implication is declared to be $\forall x (\D\psi\to \D\phi)$. Thinking within the lattice, we often  use $\leq$ instead of the implicational arrow.

Herzog  \cite{Her} showed (among other, more general, things) that a collection $\Sigma$ of such implications has a model if and only if its \emph{dual}  $\D  \Sigma$ does, by which we mean the collection of all the duals of implications in $\Sigma$. An elegant proof of this can be given using character duals, see   \cite{ZZ-H}, \cite{PRZ2},  or \cite[Sect.1.3.3]{P2}, where it is shown that a module $M$ satisfies all the statements in $\Sigma$ if and only if its \emph{character module} 
$M^*=\Hom_\mathbb{Z}(M, \mathbb{Q}/\mathbb{Z})$ satisfies all the statements in $\D\Sigma$.

\subsection{Definable subcategories}
A \emph{definable subcategory}  is a full subcategory of the category of all modules over a given ring that is closed under pure submodules, direct limits and direct products. These are known to be exactly the classes of modules \emph{definable} (i.e., axiomatizable) by sets of implications of (unary) pp formulas as above \cite[Thm.3.4.7]{P2}. For properties and examples see Section \ref{Ivo} and Theorem \ref{divtfD} below and, especially, \cite[Sect.3.4]{P2}. 

We use $\phi\leq_\cal X \psi$ to mean that $\phi(X)\subseteq\psi(X)$ for all $X\in\cal X$; and $\sim_\cal X$ stands for `$\leq_\cal X$ and $\geq_\cal X$.' Then, clearly, a definable subcategory $\cal X$ is axiomatized by the set of all  equivalences  $\forall x(\phi\leftrightarrow \psi)$ that are  true in $\cal X$, i.e., for which $\phi\sim_\cal X\psi$.  And if $\cal X$ wasn't a definable subcategory to begin with,  this set axiomatizes the \emph{definable subcategory generated by $\cal X$}, denoted  $\langle \cal X \rangle$, i.e., the  smallest definable subcategory containing $\cal X$. Note, the relations $\leq_\cal X$ and $\leq_{\langle \cal X \rangle}$ are the same.

It may be most instructive to understand definable subcategories of, say $\RMod$ as the closed sets (or rather classes) of the Galois correspondence between left $R$-modules and implications of pp formulas true in them. More precisely, let $\RImp$  denote the set of all implications of left 1-place pp formulas, i.e., sentences of the form $\forall x (\phi \to \psi)$, where we assume, without loss of generality, that $\forall x (\psi \to \phi)$ is true in any left $R$-module.

Consider the correspondence between (subclasses of)  $\RMod$  and (subsets of)   $\RImp$ 
given by
$$\cal X \mapsto \Sigma({\cal X}) := \{ \sigma\in\RImp : \sigma \mbox{ is true in every } X\in \cal X\},$$
$$\Sigma \mapsto \cal X({\Sigma}) := \{X\in\RMod :   \mbox{ every  } \sigma \in \Sigma \mbox{ is true in  } X  \}.\footnote{Logically speaking, $ \Sigma({\cal X})$ is the implicational theory of $\cal X$, while $\cal X({\Sigma})$ is the model class of $\Sigma$, cf.\ \cite[Ex.3.4.4]{R}.}$$
  
It is easy to check that this is indeed a Galois connection, whose closed subclasses of  $\RMod$ are the definable subcategories.\footnote{The closed subsets of  $\RImp$ are the deductively closed subsets, i.e., the implicational theories of classes of modules.} Further, $\langle \cal X \rangle = \cal X(\Sigma(\cal X))$ and $\Sigma(\cal X)=\Sigma(\langle \cal X \rangle)$ and therefore
$\cal Y\subseteq \cal X \Rightarrow \langle \cal Y \rangle\subseteq \langle \cal X \rangle \Leftrightarrow \Sigma(\cal X)\subseteq\Sigma(\cal Y).$ But $\leq_\cal X$ is the set of all pairs $(\phi, \psi)$ with  $\forall x (\phi \to \psi)\in \Sigma(\cal X)$, so the right hand side of the previous equivalence is the same as saying that the relation $\leq_\cal X$ is contained in the relation $\leq_\cal Y$. We conclude:

\begin{fact}\label{inclusion} Let $\cal  X$ and $\cal Y$ be classes of modules (on the same side).

$ \langle \cal Y \rangle\subseteq \langle \cal X \rangle$ if and only if   $\phi \leq_\cal X \psi$ implies  $\phi \leq_\cal Y \psi$ for all (1-place) pp formulas $\phi$ and $\psi$.
\end{fact}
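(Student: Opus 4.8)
The plan is to read the statement off the Galois correspondence displayed just before the Fact---indeed, most of the argument is already spelled out there. Everything reduces to the single equivalence
\[
\langle\cal Y\rangle\subseteq\langle\cal X\rangle\iff\Sigma(\cal X)\subseteq\Sigma(\cal Y),
\]
which I would establish first, and then unwind what its right-hand side says in terms of the relations $\leq_{\cal X},\leq_{\cal Y}$.

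For the equivalence itself I would use only the formal properties of the Galois connection $\bigl(\cal X\mapsto\Sigma(\cal X),\ \Sigma\mapsto\cal X(\Sigma)\bigr)$ recorded above: both maps are inclusion-reversing, $\langle\cal X\rangle=\cal X(\Sigma(\cal X))$ and $\Sigma(\cal X)=\Sigma(\langle\cal X\rangle)$ (and likewise for $\cal Y$). For ``$\Rightarrow$'': apply $\Sigma$ to $\langle\cal Y\rangle\subseteq\langle\cal X\rangle$ to get $\Sigma(\langle\cal X\rangle)\subseteq\Sigma(\langle\cal Y\rangle)$, then rewrite $\Sigma(\langle\cal X\rangle)$ as $\Sigma(\cal X)$ and $\Sigma(\langle\cal Y\rangle)$ as $\Sigma(\cal Y)$. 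For ``$\Leftarrow$'': apply $\cal X(-)$ to $\Sigma(\cal X)\subseteq\Sigma(\cal Y)$ to get $\cal X(\Sigma(\cal Y))\subseteq\cal X(\Sigma(\cal X))$, which is just $\langle\cal Y\rangle\subseteq\langle\cal X\rangle$.

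It remains to unwind the right-hand side. By definition $\Sigma(\cal X)$ is the set of sentences $\forall x(\phi\to\psi)$ in $\RImp$ that hold throughout $\cal X$, i.e.\ exactly those with $\phi\leq_{\cal X}\psi$; hence $\Sigma(\cal X)\subseteq\Sigma(\cal Y)$ says precisely that $\phi\leq_{\cal X}\psi$ implies $\phi\leq_{\cal Y}\psi$ for every pair $\phi,\psi$ \emph{occurring in $\RImp$}, that is, with $\forall x(\psi\to\phi)$ valid in all modules. The one point that is not pure bookkeeping is why this normalized form already yields the statement of the Fact for arbitrary pp formulas $\phi,\psi$: given such a pair, set $\psi'=\phi\wedge\psi$, which is again pp and satisfies $\psi'\leq\phi$ in every module; since $\phi(X)\subseteq\psi(X)$ iff $\phi(X)\subseteq\psi'(X)$, we have $\phi\leq_{\cal X}\psi\iff\phi\leq_{\cal X}\psi'$ and the same over $\cal Y$, so the implication for $(\phi,\psi')$ delivers the one for $(\phi,\psi)$. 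This little conjunction trick is the only non-formal step, and it is immediate; I do not expect any real obstacle here.
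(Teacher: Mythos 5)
Your proof is correct and is essentially the paper's own argument: the Fact is read off from the Galois connection $(\Sigma, \cal X(-))$ via the chain $\langle\cal Y\rangle\subseteq\langle\cal X\rangle\Leftrightarrow\Sigma(\cal X)\subseteq\Sigma(\cal Y)$, exactly as in the discussion the paper places immediately before the statement. Your extra step justifying the reduction to the normalized pairs in $\RImp$ via $\psi'=\phi\wedge\psi$ is a valid filling-in of what the paper dismisses as ``without loss of generality.''
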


The \emph{dual}, $\D\cal X$, of a definable subcategory $\cal X$ is by definition the definable subcategory (on the other side) given by those axioms $\forall x(\D\psi\rightarrow \D\phi)$ for which $\phi\leq_{\cal X}\psi$ (in other words, one just dualizes all implications that hold in $\cal X$). Clearly, $\phi\leq_{\cal X}\psi$ if and only if $\D\psi\leq_{\D\cal X}\D\phi$. By Section \ref{elemdual}, a module $X$ is in $\cal X$ if and only if the character dual $X^*$ is in $\D\cal X$.

\subsection{Extensions of elementary duality}

In \cite{PRZ2} elementary duality was extended to certain infinitary implications, most prominently (but not only) to sentences of the following forms. 

An \emph{A-sentence} is a sentence of the form 
$\forall x \{(\bigwedge_{i} \psi_i) \to \phi\}$ with $\phi$ and the $\psi_i$ pp formulas (and the conjunction possibly infinitary), 
while an \emph{F-sentence} is one of the form 
$\forall x \{\psi\to (\sum_{i} \phi_i)\}$ with $\psi$ and the $\phi_i$ pp formulas (and the sum possibly infinitary).
The subformulas $\phi, \psi$ etc are assumed to be  in the same free variable $x$ (which may in general be a finite collection, but for all purposes at hand we may restrict ourselves to the case of a single variable).

Here \emph{A} stands for ``absolutely pure" and $F$ for ``flat," the reason for which will become clear in the next section.

Elementary duality is extended to A- and F-sentences as follows. The possibly infinitary sentences $\forall x \{(\bigwedge_{i} \psi_i) \to \phi\}$
and
$\forall x \{\psi\to (\sum_{i} \phi_i)\}$ are defined to be \emph{dual} if so are the pp formulas $\phi$ and $\psi$, as well as the pp formulas $\phi_i$ and $\psi_i$, for all $i$. 
In other words, the dual of the A-sentence $\forall x \{(\bigwedge_{i} \psi_i) \to \phi\}$ is the F-sentence $\forall x \{\D\phi\to (\sum_{i} \D\psi_i)\}$.
Notice, this makes the latter statement one for right modules if the former was one for left modules.
Similarly, the dual of the F-sentence $\forall x \{\psi\to (\sum_{i} \phi_i)\}$ is the A-sentence $\forall x \{(\bigwedge_{i} \D\phi_i) \to \D\psi\}$. (Again, we do not need to specify sides, when we, as usually, let $\D$ act both ways, left to right, as well as right to left, so that $\D^2$ is the identity.) The main tool from \cite[Thm.4.3]{PRZ2}
now states as follows.

\begin{fact}\label{PRZ}
\begin{enumerate}[\upshape(1)]
\item An F-sentence is true in a module $M$ if and only if its dual is true in $M^*=\Hom_\mathbb{Z}(M, \mathbb{Q}/\mathbb{Z})$. 
\item An A-sentence is true in $M$ whenever its dual is true in $M^*$. 
\end{enumerate}
\end{fact}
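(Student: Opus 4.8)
The plan is to reduce both statements to the module-level formulation of elementary duality and then push sums and intersections across the annihilator operation; I take $x$ to be a single variable throughout, as in the paper, with nothing changing in the tuple case. Recall the fact that underlies the character-dual proof of Herzog's theorem quoted above (see \cite[Sect.1.3.3]{P2} or \cite{Her}): for every pp formula $\phi$ and every module $M$, the subgroup $(\D\phi)(M^*)$ of $M^*$ is the annihilator of $\phi(M)\subseteq M$; writing $A^{\circ}=\{f\in M^*: f(a)=0\text{ for all }a\in A\}$ for the annihilator in $M^*$ of a subgroup $A\le M$, this reads $(\D\phi)(M^*)=\phi(M)^{\circ}$. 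Because $\mathbb{Q}/\mathbb{Z}$ is an injective cogenerator of the category of abelian groups, $(-)^{\circ}$ moreover satisfies, for subgroups $A,B\le M$, that $B\subseteq A$ iff $A^{\circ}\subseteq B^{\circ}$ (the nontrivial direction: a point of $B\setminus A$ is separated from $A$ by a map $M\to M/A\to\mathbb{Q}/\mathbb{Z}$), while $\bigl(\sum_i A_i\bigr)^{\circ}=\bigcap_i A_i^{\circ}$ is an equality and $\sum_i A_i^{\circ}\subseteq\bigl(\bigcap_i A_i\bigr)^{\circ}$ is in general proper. It is exactly this asymmetry of $\sum$ and $\bigcap$ under $(-)^{\circ}$ that will make (1) an equivalence and leave (2) one-directional.

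For (1), let $\sigma=\forall x\{\psi\to\sum_i\phi_i\}$ be an F-sentence, with dual A-sentence $\D\sigma=\forall x\{\bigwedge_i\D\phi_i\to\D\psi\}$. Reading the infinitary connectives as subgroup sum and subgroup intersection, $M\models\sigma$ says $\psi(M)\subseteq\sum_i\phi_i(M)$, while $M^*\models\D\sigma$ says $\bigcap_i(\D\phi_i)(M^*)\subseteq(\D\psi)(M^*)$. By the cogenerator criterion the first is equivalent to $\bigl(\sum_i\phi_i(M)\bigr)^{\circ}\subseteq\psi(M)^{\circ}$; by the lattice equality the left side equals $\bigcap_i\phi_i(M)^{\circ}$; and by the module-level duality $\phi_i(M)^{\circ}=(\D\phi_i)(M^*)$ for each $i$ and $\psi(M)^{\circ}=(\D\psi)(M^*)$. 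Substituting, the inclusion becomes $\bigcap_i(\D\phi_i)(M^*)\subseteq(\D\psi)(M^*)$, i.e.\ $M^*\models\D\sigma$. Every step is a biconditional, which proves (1).

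For (2), let $\tau=\forall x\{\bigwedge_i\psi_i\to\phi\}$ be an A-sentence, with dual F-sentence $\D\tau=\forall x\{\D\phi\to\sum_i\D\psi_i\}$, and suppose $M^*\models\D\tau$, i.e.\ $(\D\phi)(M^*)\subseteq\sum_i(\D\psi_i)(M^*)$. By the module-level duality this reads $\phi(M)^{\circ}\subseteq\sum_i\psi_i(M)^{\circ}$; by the lattice inclusion the right side lies inside $\bigl(\bigcap_i\psi_i(M)\bigr)^{\circ}$, so $\phi(M)^{\circ}\subseteq\bigl(\bigcap_i\psi_i(M)\bigr)^{\circ}$; and the cogenerator criterion then gives $\bigcap_i\psi_i(M)\subseteq\phi(M)$, which is $M\models\tau$. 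The argument does not reverse, and indeed the converse of (2) fails in general, precisely because $\sum_i\psi_i(M)^{\circ}$ may be strictly smaller than $\bigl(\bigcap_i\psi_i(M)\bigr)^{\circ}$; this is why (2) is stated only as an implication.

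I expect the one genuinely substantial ingredient to be the module-level formulation of elementary duality, $(\D\phi)(M^*)=\phi(M)^{\circ}$, for an arbitrary pp formula $\phi$ rather than just the atomic and divisibility formulas displayed in Section~\ref{elemdual}: that is the only point where the combinatorics of general pp formulas (matrices of equations, existential quantifiers) must be confronted, everything afterwards being lattice bookkeeping. The remaining care is routine but worth noting: $M^*$ lives on the opposite side of the ring, so $\D\sigma$ and $\D\tau$ are sentences for modules on that side; and the possibly infinite $\sum_i$ and $\bigwedge_i$ are to be interpreted as ordinary subgroup sums and intersections in $M$ (respectively $M^*$), so that nothing infinitary actually escapes the finitary identities for $(-)^{\circ}$ used above.
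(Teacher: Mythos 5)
Your argument is correct, and it is essentially the argument behind the cited source: the paper itself states Fact \ref{PRZ} without proof as a quotation of \cite[Thm.4.3]{PRZ2}, whose proof is exactly this character-dual computation, resting on Herzog's identity $(\D\phi)(M^*)=\phi(M)^{\circ}$ together with the order-reversal of $(-)^{\circ}$ (via $\mathbb{Q}/\mathbb{Z}$ being an injective cogenerator) and the asymmetry $\bigl(\sum_i A_i\bigr)^{\circ}=\bigcap_i A_i^{\circ}$ versus $\sum_i A_i^{\circ}\subseteq\bigl(\bigcap_i A_i\bigr)^{\circ}$. You correctly isolate the one nontrivial input and correctly locate the reason part (2) is only an implication, which matches the paper's remark that Würfel's theorem shows its converse fails.
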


It  follows from W\"urfel's theorem (see the next section) that the converse of (2) is not true.

\subsection{Absolutely pure and flat modules}\label{Ivo}
Denote the class of \emph{flat} left $R$-modules by $\flat$ (or, to be precise, ${_R\flat}$) and that of \emph{absolutely pure} (or \emph{fp-injective}) right $R$-modules by $\sharp$ (or $\sharp_R$). Recall, over right noetherian rings, absolutely pure right modules are injective.

First we have an important characterization of flatness  \cite[1.3(a)]{Zim}, see also  \cite[Thm.2.3.9]{P2}.

\begin{fact}[Zimmermann] \label{flat} The following are equivalent for any left module $M$ over an arbitrary ring $R$.
\begin{enumerate}  [\upshape (i)]
 \item $M$ is flat.
 \item $\phi(M)=\phi({_R}R)M$ for all (unary) pp formulas $\phi$ (for left $R$-modules).
 \item $M$ satisfies all F-sentences of the form $\forall x \{\phi(x)\to (\sum_{s\in\phi({_R}R)} s|x)\}$, where $\phi$ is a unary pp formula.
\end{enumerate}
\end{fact}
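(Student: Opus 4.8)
The plan is to prove $(i)\Leftrightarrow(ii)$ and to note that $(ii)\Leftrightarrow(iii)$ is a reformulation. That reformulation rests on one trivial remark, which I would record first: for \emph{every} left module $N$ and \emph{every} unary pp formula $\phi$ one has $\phi({}_RR)N\subseteq\phi(N)$. Indeed, writing $\phi(x)$ as $\exists y_1,\dots,y_n\,\theta(x,y_1,\dots,y_n)$ with $\theta$ a finite conjunction of $R$-linear equations, any witness $(t_1,\dots,t_n)\in R^n$ for $s\in\phi({}_RR)$ gives the witness $(t_1a,\dots,t_na)$ for $sa\in\phi(N)$; since $\phi(N)$ is a subgroup, all finite sums $\sum_\lambda s_\lambda a_\lambda$ with $s_\lambda\in\phi({}_RR)$ and $a_\lambda\in N$ lie in $\phi(N)$. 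Granting this, $(ii)\Leftrightarrow(iii)$ is immediate: the solution set in $M$ of the (possibly infinitary) F-formula $\sum_{s\in\phi({}_RR)}s\mid x$ is exactly the set of such finite sums, i.e.\ the subgroup $\phi({}_RR)M$, so the F-sentence in $(iii)$ holds in $M$ precisely when $\phi(M)\subseteq\phi({}_RR)M$, which, together with the reverse inclusion just proved, is the equality in $(ii)$.

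For $(i)\Rightarrow(ii)$ I would use the Govorov--Lazard description of a flat module as a directed colimit $M=\varinjlim_i F_i$ of finitely generated free modules, together with the fact that pp formulas commute with directed colimits, so that $\phi(M)=\varinjlim_i\phi(F_i)$ (a witnessing tuple over $M$ already witnesses over some $F_i$, after one pushes far enough along the system to make the finitely many defining equations hold). For $F=R^k$ one has $\phi(R^k)=\phi({}_RR)^k$ because pp formulas commute with finite direct sums, and $\phi({}_RR)^k=\phi({}_RR)\cdot R^k$ because $\phi({}_RR)^k=\sum_j\phi({}_RR)e_j$ while the inclusion $\supseteq$ is the trivial remark. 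Passing to colimits then yields $\phi(M)=\phi({}_RR)M$ (the inclusion $\subseteq$ because an element of $\phi({}_RR)F_i$ maps into $\phi({}_RR)M$, and $\supseteq$ again by the trivial remark).

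The substance lies in $(ii)\Rightarrow(i)$, which I would run through the equational criterion for flatness: $M$ is flat iff every relation $\sum_{i=1}^n a_im_i=0$ with $a_i\in R$ and $m_i\in M$ factors through a relation over $R$, i.e.\ there are $e_1,\dots,e_\ell\in M$ and $b_{ij}\in R$ with $m_i=\sum_j b_{ij}e_j$ for all $i$ and $\sum_i a_ib_{ij}=0$ for all $j$. I would prove this factorization by induction on $n$. For $n=1$ the relation $a_1m_1=0$ says $m_1\in\phi(M)$ for $\phi(x)\equiv(a_1x=0)$, and $(ii)$ rewrites it as $m_1\in\phi({}_RR)M=\rann(a_1)M$, which is the required factorization verbatim. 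For the step from $n-1$ to $n$, apply $(ii)$ to $\phi(x)\equiv\exists y_2,\dots,y_n\,(a_1x+\sum_{i\ge2}a_iy_i=0)$, of which $m_1$ is a solution witnessed by $m_2,\dots,m_n$; this writes $m_1=\sum_\mu s_\mu q_\mu$ where each $s_\mu\in\phi({}_RR)$ comes with witnesses $t_{\mu,2},\dots,t_{\mu,n}\in R$ satisfying $a_1s_\mu+\sum_{i\ge2}a_it_{\mu,i}=0$. Substituting into $a_1m_1=-\sum_{i\ge2}a_im_i$ and cancelling leaves $\sum_{i\ge2}a_i\bigl(m_i-\sum_\mu t_{\mu,i}q_\mu\bigr)=0$, an $(n-1)$-term relation; by induction $m_i-\sum_\mu t_{\mu,i}q_\mu=\sum_\nu d_{\nu,i}e_\nu$ with $\sum_{i\ge2}a_id_{\nu,i}=0$ for all $\nu$. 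Taking $\{q_\mu\}\cup\{e_\nu\}$ as the witness list, with first-coordinate coefficient $s_\mu$ on $q_\mu$ and $0$ on $e_\nu$, and $i$-th-coordinate coefficient ($i\ge2$) $t_{\mu,i}$ on $q_\mu$ and $d_{\nu,i}$ on $e_\nu$, one checks that both systems of relations $\sum_i a_ib_{ij}=0$ hold, which completes the factorization.

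I expect the induction in $(ii)\Rightarrow(i)$ to be essentially the only work: choosing the right single-variable pp formula at each stage is easy, but keeping the bookkeeping straight so that the two witness lists and the two coefficient systems splice into one valid factorization --- in particular, that the spliced relations $\sum_i a_ib_{ij}=0$ hold exactly, with the first-coordinate coefficients on the ``new'' witnesses $e_\nu$ equal to zero --- is where care is needed. The colimit argument for $(i)\Rightarrow(ii)$ and the essentially definitional equivalence $(ii)\Leftrightarrow(iii)$ are routine by comparison.
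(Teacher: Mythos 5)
Your proof is correct. Note that the paper itself gives no argument for this Fact---it is imported from Zimmermann \cite[1.3(a)]{Zim} (see also \cite[Thm.2.3.9]{P2})---so there is no in-paper proof to measure yours against; what you have written is, in substance, the standard proof one finds in those sources: Govorov--Lazard plus commutation of pp formulas with direct limits and finite direct sums for (i)$\Rightarrow$(ii), and the equational criterion for flatness for (ii)$\Rightarrow$(i). The two reductions you call routine do contain the one sidedness point worth making explicit, and you handle it correctly: since the defining equations of a left pp formula carry their coefficients on the left, a witness tuple $(t_1,\dots,t_n)$ in ${}_RR$ for $s$ passes to the witness tuple $(t_1a,\dots,t_na)$ for $sa$, which both gives $\phi({}_RR)N\subseteq\phi(N)$ and makes $\phi({}_RR)$ a right ideal, so that $\sum_{s\in\phi({}_RR)}sM=\phi({}_RR)M$ and (ii)$\Leftrightarrow$(iii) really is just a restatement. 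In the induction for (ii)$\Rightarrow$(i), the choice of $\phi(x)\equiv\exists y_2,\dots,y_n\,(a_1x+\sum_{i\ge2}a_iy_i=0)$ is the right one, and I checked the splicing: with first-coordinate coefficient $s_\mu$ on $q_\mu$ and $0$ on $e_\nu$, the relations $\sum_ia_ib_{ij}=0$ hold exactly as claimed ($a_1s_\mu+\sum_{i\ge2}a_it_{\mu,i}=0$ by the witness property, and $\sum_{i\ge2}a_id_{\nu,i}=0$ by the inductive hypothesis). A cosmetic remark only: the base case $n=1$ is subsumed by the inductive step applied with an empty tail.
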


Both conditions hold also for many-place pp formulas $\phi$ \cite[Thm.2.3.9]{P2}. Notice, if the right ideal  $\phi({_R}R)$ is not finitely generated, the conclusion involves an infinite sum (or, strictly speaking, disjunction), hence the F-statement would be  infinitary in that case.

Dually,   \cite[Prop. 1.3]{PRZ1} yields a similar characterization of absolute purity \cite[Prop.2.3.3]{P2}.

\begin{fact}\label{abspure} The following are equivalent for any right module $N$ over an arbitrary ring $R$.
\begin{enumerate}  [\upshape (i)]
 \item $N$ is absolutely pure.
 \item $\phi(N)=\ann_N\D\phi({_R}R)$ for all (unary) pp formulas $\phi$  (for right $R$-modules).
 \item $N$ satisfies all A-sentences of the form 
 $\forall x \{(\bigwedge_{s\in\D\phi({_R}R)} xs=0) \to \phi(x)\},$
 where $\phi$ is a unary pp formula.
\end{enumerate}
\end{fact}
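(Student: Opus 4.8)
The plan is to run the cycle (i) $\Rightarrow$ (ii), (ii) $\Leftrightarrow$ (iii), (ii) $\Rightarrow$ (i), the middle equivalence being a mere reformulation and essentially all the content sitting in (i) $\Rightarrow$ (ii). Throughout I will use the two standard working forms of absolute purity: $N$ is absolutely pure iff $\Ext(F,N)=0$ for every finitely presented right module $F$, iff $N$ is pure in its injective hull $E(N)$. Fix a unary right pp formula $\phi$ and put it into the normal form $\phi(x)\equiv\exists\bar y\,(xG_1=\bar yG_2)$ with $G_1$ a $1\times k$ row and $G_2$ an $\ell\times k$ matrix over $R$, so that $\phi(N)=\{n\in N:\exists\bar m\in N^\ell,\ nG_1=\bar mG_2\}$; the standard matrix recipe for elementary duality then gives $\D\phi(w)\equiv\exists\bar z\,(G_1\bar z=w\ \wedge\ G_2\bar z=0)$, whence $\D\phi({_R}R)=\{G_1\bar c:\bar c\in R^k,\ G_2\bar c=0\}$.

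The first thing I would record is the universal inclusion $\phi(M)\subseteq\ann_M\D\phi({_R}R)$, valid in \emph{every} module $M$: if $nG_1=\bar mG_2$ and $G_2\bar c=0$, then $n(G_1\bar c)=(nG_1)\bar c=(\bar mG_2)\bar c=\bar m(G_2\bar c)=0$, a one-line associativity check. This already yields (ii) $\Leftrightarrow$ (iii): the A-sentence $\forall x\{(\bigwedge_{s\in\D\phi({_R}R)}xs=0)\to\phi(x)\}$ holds in $N$ precisely when $\ann_N\D\phi({_R}R)\subseteq\phi(N)$, and combined with the inclusion just noted this says $\phi(N)=\ann_N\D\phi({_R}R)$. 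It also disposes of (ii) $\Rightarrow$ (i): writing $E=E(N)$ and applying the universal inclusion to $E$, one gets $\phi(N)\subseteq N\cap\phi(E)\subseteq N\cap\ann_E\D\phi({_R}R)=\ann_N\D\phi({_R}R)=\phi(N)$, so $\phi(N)=N\cap\phi(E)$ for every $\phi$, i.e. $N$ is pure in $E(N)$; and a module pure in its injective hull is absolutely pure, since for any $M$ with $N\le M$ the hull $E(N)$ embeds as a direct summand of $E(M)$ and purity is transitive.

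The heart of the matter is (i) $\Rightarrow$ (ii), i.e. that an absolutely pure $N$ satisfies $\ann_N\D\phi({_R}R)\subseteq\phi(N)$. So let $n\in N$ satisfy $n(G_1\bar c)=0$ for all $\bar c\in R^k$ with $G_2\bar c=0$, and set $v:=nG_1\in N^k$. Then $G_2\bar c\mapsto v\bar c$ is a well-defined $R$-homomorphism $h$ from the submodule $I:=\{G_2\bar c:\bar c\in R^k\}$ of $R^\ell$ into $N$, well-definedness being exactly the hypothesis on $n$. Since $I$ is generated by the $k$ columns of $G_2$ it is finitely generated, so $R^\ell/I$ is finitely presented, and absolute purity of $N$ in the guise $\Ext(R^\ell/I,N)=0$ extends $h$ to $\tilde h\colon R^\ell\to N$. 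Reading off $\bar m:=(\tilde h(e_1),\dots,\tilde h(e_\ell))\in N^\ell$, one checks column by column that $(\bar mG_2)e_j=\tilde h(G_2e_j)=h(G_2e_j)=v e_j=(nG_1)e_j$ for every $j$, hence $nG_1=\bar mG_2$ and $n\in\phi(N)$. I expect this to be the main obstacle: it is the one point where the purely combinatorial annihilator condition defining $\ann_N\D\phi({_R}R)$ has to be recognized as the extension obstruction against a \emph{finitely presented} module — and the bookkeeping of $\D\phi$ and of the rows and columns of $G_1,G_2$ is the only place that needs genuine care.

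Finally, it is worth recording the more conceptual route to (i) $\Rightarrow$ (iii): $N$ is absolutely pure iff its character module $N^*$ is flat; applying Fact \ref{flat}(iii) to $N^*$ and then Fact \ref{PRZ}(1) transports the resulting F-sentences to A-sentences valid in $N^{**}$, and since $N$ embeds purely in $N^{**}$ while A-sentences pass to pure submodules, those A-sentences hold in $N$ — which is exactly (iii) once $\phi$ is matched with $\D\psi$ as $\psi$ ranges over left pp formulas. This does not give the converse on its own (the failure of the converse of Fact \ref{PRZ}(2) is precisely W\"urfel's theorem), so the elementary argument above is still needed for (iii) $\Rightarrow$ (i); but it makes transparent why Fact \ref{abspure} is the honest dual of Fact \ref{flat}.
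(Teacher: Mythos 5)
The paper does not prove this Fact at all: it is quoted from the literature (\cite[Prop.\ 1.3]{PRZ1}, \cite[Prop.2.3.3]{P2}), so there is no in-paper argument to compare against. Your main cycle is correct and is essentially the standard proof from those sources: the universal inclusion $\phi(M)\subseteq\ann_M\D\phi({_R}R)$ settles (ii)$\Leftrightarrow$(iii) and, applied to the injective hull, gives (ii)$\Rightarrow$(i); and the heart, (i)$\Rightarrow$(ii), is exactly the recognition of the annihilator condition as well-definedness of a map $I\to N$ from a finitely generated submodule $I\le R^\ell$, extended via $\Ext(R^\ell/I,N)=0$. Your matrix bookkeeping for $\D\phi$ is consistent with the paper's conventions (it reproduces $\D(\exists y(x=yr))=(rx=0)$ and $\D(xr=0)=(r|x)$), and the reduction of purity in $E(N)$ to unary pp formulas is licensed by \cite[Prop.2.1.6]{P2}, which the paper itself invokes.

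The one genuine error is in your closing ``conceptual route'': the biconditional ``$N$ is absolutely pure iff $N^*$ is flat'' is false over arbitrary rings. Only the implication ``$N^*$ flat $\Rightarrow$ $N$ absolutely pure'' holds unconditionally (Fact \ref{LambWurf}(2)); the forward direction, which is precisely the first step your alternative argument needs, holds for all absolutely pure modules exactly when $R$ is left coherent --- that is W\"urfel's theorem, quoted right after Fact \ref{LambWurf}. So the obstruction to the character-module route sits already in your first step, not only in the failure of the converse of Fact \ref{PRZ}(2) where you located it. Since you explicitly keep the elementary $\Ext$ argument as the actual proof, the Fact is still fully established; just delete or correct that ``iff.''
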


 Here $\D\phi({_R}R)$ is the right ideal defined by $\D\phi$, a `left' formula, in the module ${_R}R$. (Again, both conditions hold equivalently for many-place pp formulas, but \cite[Prop.2.1.6]{P2} allows us to restrict to unary formulas $\phi$ also here.) When the right ideal $\D\phi({_R}R)$ is not finitely generated, the antecedent of that A-sentence involves an infinite conjunction, hence it would be  infinitary in that case.

In view of conditions (iii) of the previous two descriptions,  Fact \ref{PRZ} above gives us the following special case.

\begin{fact}\label{LambWurf}
\begin{enumerate}[\upshape(1)]
 \item {\rm (Lambek).} $M$ is flat if and only if  $M^*$  is absolutely pure.
 \item  {\rm (W\"urfel).}  $M$ is absolutely pure whenever  $M^*$  is flat.
\end{enumerate}
\end{fact}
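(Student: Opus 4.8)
The plan is to read both parts directly off condition (iii) of Facts~\ref{flat} and~\ref{abspure}, together with Fact~\ref{PRZ}. The only ingredient needed beyond those statements is that elementary duality $\D$ restricts to a bijection between (equivalence classes of) unary pp formulas for left modules and those for right modules, acting on the relevant atomic formulas by $\D(s|x)=(xs=0)$ and $\D(xs=0)=(s|x)$ (the easy cases recalled in Section~\ref{elemdual}); this is what makes the list of F-sentences in Fact~\ref{flat}(iii) and the list of A-sentences in Fact~\ref{abspure}(iii) coincide up to $\D$.

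For (1), take $M$ to be a left module, so that $M^*$ is a right module. By Fact~\ref{flat}(iii), $M$ is flat if and only if it satisfies every F-sentence $\sigma_\phi:=\forall x\,\{\phi(x)\to\sum_{s\in\phi({_R}R)} s|x\}$ with $\phi$ a unary left pp formula. The dual of $\sigma_\phi$ is the A-sentence $\D\sigma_\phi=\forall x\,\{(\bigwedge_{s\in\phi({_R}R)} xs=0)\to\D\phi(x)\}$ for right modules. Since $\D^2\phi=\phi$, the index set $\phi({_R}R)$ equals $\D(\D\phi)({_R}R)$, so $\D\sigma_\phi$ is precisely the A-sentence attached in Fact~\ref{abspure}(iii) to the right pp formula $\D\phi$; and as $\phi$ runs through all unary left pp formulas, $\D\phi$ runs through all unary right pp formulas. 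Now Fact~\ref{PRZ}(1) gives that $\sigma_\phi$ holds in $M$ if and only if $\D\sigma_\phi$ holds in $M^*$. Hence $M$ is flat if and only if $M^*$ satisfies every A-sentence of Fact~\ref{abspure}(iii), i.e.\ if and only if $M^*$ is absolutely pure.

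For (2), take $M$ to be a right module, so that $M^*$ is a left module; this time only the one-sided implication of Fact~\ref{PRZ}(2) is available. By Fact~\ref{abspure}(iii), $M$ is absolutely pure if and only if it satisfies every A-sentence $\tau_\phi:=\forall x\,\{(\bigwedge_{s\in\D\phi({_R}R)} xs=0)\to\phi(x)\}$ with $\phi$ a unary right pp formula. The dual of $\tau_\phi$ is the F-sentence $\D\tau_\phi=\forall x\,\{\D\phi(x)\to\sum_{s\in\D\phi({_R}R)} s|x\}$ for left modules, which is exactly the F-sentence attached in Fact~\ref{flat}(iii) to the left pp formula $\D\phi$; and $\D\phi$ runs through all unary left pp formulas as $\phi$ runs through all unary right ones. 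By Fact~\ref{PRZ}(2), $\tau_\phi$ holds in $M$ whenever $\D\tau_\phi$ holds in $M^*$. So if $M^*$ is flat, then by Fact~\ref{flat}(iii) every $\D\tau_\phi$ holds in $M^*$, whence every $\tau_\phi$ holds in $M$, whence $M$ is absolutely pure.

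There is no genuine obstacle here beyond bookkeeping: the one thing to be careful about is the side-switching of $\D$ together with the fact that $\phi\mapsto\D\phi$ is a bijection on unary pp formulas, so that ``for all $\phi$'' on one side becomes ``for all $\D\phi$'' on the other and the subgroup $\phi({_R}R)$ is matched by $\D(\D\phi)({_R}R)$. The asymmetry between (1), a biconditional, and (2), a single implication, is forced by the asymmetry between parts (1) and (2) of Fact~\ref{PRZ}; that the converse of (2) does fail is W\"urfel's observation recorded immediately after the statement.
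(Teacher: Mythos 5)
Your proposal is correct and is exactly the paper's argument: the paper derives Fact \ref{LambWurf} in the one sentence preceding it (``In view of conditions (iii) of the previous two descriptions, Fact \ref{PRZ} above gives us the following special case''), and your write-up just supplies the bookkeeping --- that $\D$ matches the family of F-sentences in Fact \ref{flat}(iii) bijectively with the family of A-sentences in Fact \ref{abspure}(iii) --- which is carried out correctly, including the asymmetry in (2) forced by Fact \ref{PRZ}(2).
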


That the converse of (2) is not true follows from W\"urfel's theorem saying that the character module  of every absolutely pure left $R$-module is flat if and only if $R$ is left coherent. It is this result (and  the simple proof it was given in \cite[Thm.4.4]{PRZ2}) that inspired the search for the characterization given in Theorem \ref{divtfD} below. 
The next, classical, result is the model for that theorem, see \cite[Thm.3.4.24]{P2} (or \cite{P}) for references and proof.

\begin{fact}[Eklof--Sabbagh]\label{ES} 
\begin{enumerate}[\upshape(1)]
\item
 The following are equivalent for any ring $R$.
\begin{enumerate}  [\upshape (i)]
 \item  $\flat = {_R\flat}$ is a definable subcategory. 
 \item $\sharp = \sharp_R$  is a definable subcategory. 
 \item $R$ is right coherent.
\end{enumerate}
\item In this case, $\flat$ and $\sharp$ are dual to each other.
\end{enumerate}
\end{fact}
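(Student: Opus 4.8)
The plan is to settle the flat case (i)$\Leftrightarrow$(iii) first and then to obtain the absolutely pure case (ii) together with the duality in~(2) by the character‑module argument that is already the engine behind Lambek's and W\"urfel's theorems. The one classical ring‑theoretic input I allow myself is the pp‑translation of coherence: \emph{$R$ is right coherent if and only if the right ideal $\phi({_R}R)$ is finitely generated for every $1$‑place pp formula $\phi$ for left $R$‑modules} (equivalently, by Chase's criterion, ${_R}R^{I}$ is flat for all index sets $I$).

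\emph{The flat case.} First observe that $\flat$ is \emph{always} closed under direct limits ($\Tor$ commutes with direct limits) and under pure submodules: if $N$ is pure in a flat module $M$, then $\phi({_R}R)N\subseteq N\cap\phi(M)=\phi(N)$, while any $n\in\phi(N)\subseteq\phi(M)=\phi({_R}R)M$ (Fact~\ref{flat}) solves over $N$ a linear system that is solvable in $M$, hence in $N$, so $n\in\phi({_R}R)N$; thus $\phi(N)=\phi({_R}R)N$ for all $\phi$ and $N\in\flat$ by Zimmermann's criterion. Consequently $\flat$ is a definable subcategory if and only if it is closed under direct products. Applying Zimmermann's criterion to $\prod_iM_i$ (pp formulas commute with products), closure under products reduces to the equality $\phi({_R}R)\cdot\prod_iM_i=\prod_i\phi({_R}R)M_i$; testing this on $M_i={_R}R$ one sees it holds for all $\phi$ exactly when every $\phi({_R}R)$ is finitely generated, i.e.\ exactly when $R$ is right coherent. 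This gives (i)$\Leftrightarrow$(iii). More concretely for (iii)$\Rightarrow$(i): over a right coherent ring each $\phi({_R}R)$ is generated by finitely many $s_1,\dots,s_k$ (it is the image of the kernel of a homomorphism of finitely generated free right modules, hence finitely generated), so the F‑sentence of Fact~\ref{flat}(iii) collapses to the finitary pp‑implication $\forall x\,(\phi(x)\to\exists\bar y\,(x=\sum_j s_jy_j))$, exhibiting $\flat$ as definable.

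\emph{The absolutely pure case and part~(2).} Assume $R$ is right coherent. For a $1$‑place pp formula $\phi$ for right modules, $\D\phi$ is a $1$‑place pp formula for left modules, so $\D\phi({_R}R)=t_1R+\dots+t_lR$ is finitely generated; then the infinitary antecedent $\bigwedge_{s\in\D\phi({_R}R)}xs=0$ of the A‑sentence in Fact~\ref{abspure}(iii) is equivalent to the finite conjunction $\bigwedge_{i}(xt_i=0)$, and that A‑sentence becomes the finitary pp‑implication $\forall x\,(\bigwedge_i(xt_i=0)\to\phi(x))$. Hence $\sharp$ is axiomatized by pp‑implications, so it is a definable subcategory: (iii)$\Rightarrow$(ii). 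Furthermore the finitary pp‑implications just produced for $\sharp$ are, formula by formula, exactly the duals of those produced for $\flat$ (use $\D(\sum_j s_j|x)=\bigwedge_j(xs_j=0)$ and $\D\D\phi({_R}R)=\phi({_R}R)$); since these sets of implications axiomatize the two classes, $\D\flat=\sharp$ and $\D\sharp=\flat$, which is part~(2). (Alternatively, $\flat$ definable $\Rightarrow\D\flat$ definable, and for a right module $N$ one has $N\in\D\flat\iff N^{*}\in\flat\iff N^{*}$ flat, which by W\"urfel's theorem together with its converse — available because $R$ is right coherent — is equivalent to $N$ being absolutely pure.)

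\emph{The last implication and the main obstacle.} For (ii)$\Rightarrow$(iii): if $\sharp$ is a definable subcategory then $\D\sharp$ is a definable subcategory of left modules, and for every left module $M$ we have $M\in\D\sharp\iff M^{*}\in\sharp\iff M^{*}$ absolutely pure $\iff M$ flat, the last step by Lambek's theorem (Fact~\ref{LambWurf}(1)); hence $\D\sharp=\flat$ is definable and (i)$\Rightarrow$(iii) applies. The main obstacle is precisely the ring‑theoretic fact quoted at the outset — the nontrivial half of Chase's theorem, ``direct products of flat left modules are flat $\Rightarrow$ $R$ right coherent'' — whose proof must genuinely exploit a finitely generated right ideal that fails to be finitely presented (equivalently, a pp‑definable $\phi({_R}R)$ that is not finitely generated) in order to violate Zimmermann's equality $\phi(\prod{_R}R)=\phi({_R}R)\cdot\prod{_R}R$. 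Everything else is bookkeeping inside the duality machinery, the only genuine care being to track on which side each class and each of the operators $\D$ and $(-)^{*}$ acts, since $\flat$ lives on the left, $\sharp$ on the right, and both $\D$ and $(-)^{*}$ swap sides.
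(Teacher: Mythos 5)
The paper states this result as a classical fact, citing \cite[Thm.3.4.24]{P2} rather than proving it, and only sketches the key mechanism in the surrounding discussion: under right coherence each $\phi({_R}R)$ is finitely generated, so the F- and A-sentences of Facts \ref{flat}(iii) and \ref{abspure}(iii) collapse to mutually dual finitary pp-implications, yielding definability of both classes and their duality. Your proof is correct and fleshes out exactly this mechanism, supplemented by the standard Chase-type product argument for (i)$\Rightarrow$(iii) and the character-dual reduction of (ii)$\Rightarrow$(iii) to the flat case via Lambek's theorem; it does not diverge from the route the paper indicates.
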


So, if the ring  is right coherent, the duality of the A- and F-sentences exhibited in  clauses (iii) of Facts \ref{flat} and \ref{abspure} readily implies the duality of the corresponding definable categories, for then those sentences are, in fact, finitary (because then the corresponding conjunctions and sums are) \cite[Prop.3.4.24]{P2}.  When  the ring  is not right coherent (and $\flat$ and $\sharp$ no longer constitute definable subcategories),  the control one has over all pp subgroups in flat and absolutely pure modules thanks to clauses (ii) in those facts above  (which is what seems to be missing for divisible and torsion-free modules!), allows one, on the other hand, to prove the same for the corresponding generated definable subcategories  \cite[Cor.12.2]{Her}, see also \cite[Prop.3.4.26]{P2}.

\begin{fact}[Herzog] \label{IvoH} 
 The dual of the definable category generated by  $\flat$ ($ = {_R\flat}$) is the definable category generated by $\sharp$ ($ = \sharp_R$, over any ring $R$).
\end{fact}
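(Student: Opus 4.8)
The plan is to deduce Fact~\ref{IvoH} from Fact~\ref{PRZ} together with the pp-formula characterizations in Facts~\ref{flat} and~\ref{abspure}. The key observation is that the character-module functor $M\mapsto M^*$ converts membership in a definable subcategory into membership in its dual (Section~\ref{elemdual}), and that it interchanges $\flat$ and $\sharp$ in one direction by W\"urfel's half of Fact~\ref{LambWurf}(2) — but not the other. So a naive ``apply $*$ twice'' argument fails, and the work is to see why $\langle\flat\rangle$ and $\langle\sharp\rangle$ nevertheless behave symmetrically.

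First I would unwind what $\langle\flat\rangle$ is in terms of pp-implications. By Fact~\ref{flat}(iii), $\flat$ is the model class of the (possibly infinitary) F-sentences $\sigma_\phi:\ \forall x\{\phi(x)\to(\sum_{s\in\phi({_R}R)}s\,|\,x)\}$ as $\phi$ ranges over unary pp formulas for left modules; equivalently, by Fact~\ref{flat}(ii), for $M\in\flat$ every pp subgroup is determined as $\phi(M)=\phi({_R}R)M$, and in particular $\phi\leq_{\flat}\psi$ holds iff $\phi({_R}R)\subseteq\psi({_R}R)$ (inclusion of the corresponding right ideals — note $_RR$ itself is flat, so it realizes all the relations of $\flat$). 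Similarly, by Fact~\ref{abspure}(ii),(iii), $\sharp$ is cut out by the A-sentences $\tau_\phi:\ \forall x\{(\bigwedge_{s\in\D\phi({_R}R)}xs=0)\to\phi(x)\}$, and $\phi\leq_\sharp\psi$ is again governed by inclusions among right ideals of the form $\D\chi({_R}R)$. The point is that, under elementary duality $\D$, the sentences $\sigma_\phi$ and $\tau_{\D\phi}$ are exactly dual to one another (the sum $\sum_{s\in\phi({_R}R)}s\,|\,x$ dualizes to the conjunction $\bigwedge_{s\in\phi({_R}R)}xs=0$, using $\D(s|x)\equiv xs=0$, and $\phi$ dualizes to $\D\phi$), so the theory axiomatizing $\langle\flat\rangle$ is, term for term, the $\D$-dual of the theory axiomatizing $\langle\sharp\rangle$.

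Having matched the axiom sets, I would then invoke Fact~\ref{PRZ} to push this through the character functor. By Fact~\ref{PRZ}(1), $M\models\sigma_\phi$ for all $\phi$ iff $M^*\models\D\sigma_\phi$ for all $\phi$; combined with the previous paragraph this says $M\in\flat$ iff $M^*\in\langle\sharp\rangle$ — which is already W\"urfel's direction and its (failing) converse made precise at the level of the whole generated class rather than $\sharp$ itself. Symmetrically, Fact~\ref{PRZ}(1) applied to the F-sentences cutting out $\langle\flat\rangle$ on the right-module side gives $N\in\langle\flat\rangle$ (right modules) iff $N^*\in\langle\sharp\rangle$ (left modules). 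Now the standard machinery for duals of definable subcategories (Section~\ref{elemdual}: $X\in\cal X\Leftrightarrow X^*\in\D\cal X$, and $\D\cal X$ is determined by which pp-implications are forced) lets me identify $\D\langle\flat\rangle$ as precisely the definable subcategory whose truthmaking implications are the $\D$-duals of those of $\langle\flat\rangle$; by the axiom-matching this is exactly $\langle\sharp\rangle$. One checks the reverse inclusion the same way, or simply applies $\D$ once more using $\D^2=1$.

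The main obstacle I anticipate is bookkeeping rather than a deep idea: one must be careful that $\D$ really does carry the generating A-sentences of $\sharp$ to the generating F-sentences of $\flat$ \emph{and vice versa} — i.e.\ that the two families $\{\sigma_\phi\}$ and $\{\tau_\psi\}$ are genuinely $\D$-images of each other and not merely ``included in'' one another — and that passing from a class to the definable subcategory it generates commutes with $\D$ (this is where one needs that $\leq_\cal X$ and $\leq_{\langle\cal X\rangle}$ coincide, Fact~\ref{inclusion}, so that $\D$ of the generated class is the generated class of $\D$ applied termwise). A secondary subtlety is the infinitary character of $\sigma_\phi,\tau_\phi$ when the relevant (right) ideals are not finitely generated: this is exactly why one cannot simply cite Fact~\ref{ES} and must instead route the argument through the infinitary duality of Fact~\ref{PRZ} — but since that Fact is stated for A- and F-sentences of precisely this shape, no further work on the infinitary side is needed.
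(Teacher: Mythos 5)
The paper itself gives no proof of Fact~\ref{IvoH} --- it is quoted from Herzog \cite[Cor.12.2]{Her} (see also \cite[Prop.3.4.26]{P2}) --- but the paragraph preceding it tells you what the proof must rest on: the control over \emph{all} pp subgroups of flat and absolutely pure modules provided by clauses (ii) of Facts~\ref{flat} and~\ref{abspure}. Your proposal assembles the right ingredients but does not carry out the decisive step. By the definition of the dual of a definable subcategory and Fact~\ref{inclusion}, the equality $\D\langle\flat\rangle=\langle\sharp\rangle$ says precisely that, for all \emph{finitary} pp formulas, $\phi\leq_\flat\psi$ if and only if $\D\psi\leq_\sharp\D\phi$. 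Your ``axiom-matching'' only shows that the (possibly infinitary) F-sentences cutting out $\flat$ and the A-sentences cutting out $\sharp$ are termwise $\D$-dual; that is a statement about $\flat$ and $\sharp$ themselves, not about the finitary implicational theories $\Sigma(\flat)$ and $\Sigma(\sharp)$ that determine $\langle\flat\rangle$, $\langle\sharp\rangle$ and their duals. Passing from the former to the latter is the entire content of the theorem, not a formal consequence: were this transfer automatic, the identical argument applied to the dual A-/F-sentences for $\Div$ and $\Tf$ would answer Question~\ref{ques} affirmatively for every ring, which the paper explicitly cannot do. The subsequent appeal to Fact~\ref{PRZ}(1) does not repair this: the F-sentences cut out $\flat$, not $\langle\flat\rangle$ (a module of $\langle\flat\rangle\setminus\flat$ need not satisfy them), so the asserted equivalence ``$N\in\langle\flat\rangle$ iff $N^*\in\langle\sharp\rangle$'' is essentially the statement to be proved.

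The fix is within reach of what you wrote, and it is where clauses (ii) enter. You correctly establish $\phi\leq_\flat\psi\Leftrightarrow\phi({_R}R)\subseteq\psi({_R}R)$ (Fact~\ref{flat}(ii) together with flatness of ${_R}R$). The matching statement on the other side, $\chi\leq_\sharp\rho\Leftrightarrow\D\rho({_R}R)\subseteq\D\chi({_R}R)$, you only assert (``governed by inclusions''): the direction $\Leftarrow$ is immediate from Fact~\ref{abspure}(ii), but the direction $\Rightarrow$ requires exhibiting an absolutely pure module detecting these right ideals --- take $({_R}R)^*$, absolutely pure by Lambek, for which character duality gives $\chi\leq_{({_R}R)^*}\rho\Leftrightarrow\D\rho({_R}R)\subseteq\D\chi({_R}R)$. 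With both equivalences in hand, $\D^2=1$ yields $\phi\leq_\flat\psi\Leftrightarrow\phi({_R}R)\subseteq\psi({_R}R)\Leftrightarrow\D\psi\leq_\sharp\D\phi$, and Fact~\ref{inclusion} concludes. Note the asymmetry this computation hides: the inclusion $\D\langle\flat\rangle\subseteq\langle\sharp\rangle$ already follows from Lambek's theorem by the character-module argument of Corollary~\ref{subdual}, whereas the reverse inclusion is exactly where W\"urfel's converse fails and where Fact~\ref{abspure}(ii) is indispensable; any argument that treats the two directions symmetrically via $M\mapsto M^*$, as yours tends to, is glossing over the real point.
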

 
\subsection{Almost projective modules}\label{cd} 
The theme of \cite{EM} is various approximations to freeness of abelian groups and modules. We are mostly interested in the same thing for projectivity, but the terminology tends to be inconsistent or uneven at the least.  Therefore we first fix the notation.

Let  $\kappa$ be a cardinal, $\cal S$ a property of modules, and $\cal F$ a class of modules for which the term `$\cal F$-pure' makes sense. In the way we use this, `$\emptyset$-pure submodule' will mean `submodule' and `($\RMod$)-pure submodule' will mean `pure submodule.'

Sticking to the original usage, we say a module is  $\kappa$-$\cal S$
  if every \emph{$<\kappa$-generated}  submodule (i.e., submodule generated by less than $\kappa$ elements) has property $\cal S$. We need various versions of this with some sort of covering by $\cal F$-pure submodules having property $\cal S$  involved (so far, `$\cal F$-pure' has not yet been given any meaning, which it will be in Definition \ref{F-pure}). Call a module  $\kappa$-$\cal F$-$\cal S$ if every \emph{$<\kappa$-generated}  submodule (or simply every subset of power $<\kappa$) is contained in a $<\kappa$-generated $\cal F$-pure submodule that has property $\cal S$.  (This is not properly a weakening, for it involves an existence statement.) 

We write $\kappa$-$*$-$\cal S$ instead of $\kappa$-($\RMod$)-$\cal S$ and $\kappa$-$\circ$-$\cal S$ instead of $\kappa$-$\emptyset$-$\cal S$. In other words, a module is $\kappa$-$\circ$-$\cal S$
($\kappa$-$*$-$\cal S$) if every  $<\kappa$-generated submodule is contained in a $<\kappa$-generated (pure) submodule with property $\cal S$.

 Trivially $\kappa-\cal S$ implies $\lambda-\cal S$ for all $\lambda\leq \kappa$. But for the $\cal F$-adorned concept this may not be true, simply for the lack of $\cal F$-pure submodules generated by few enough elements (which is no problem when $\kappa > |R|$, as then every subset of power $\lambda<\kappa$ is contained  in a pure submodule of power $\lambda+|R|<\kappa$).
 
 One may think of a certain  class of $<\kappa$-generated $\cal F$-pure submodules with property $\cal S$ as a covering class $\cal C$ for subsets of power $<\kappa$ in the above ($\cal C$ need not  be the class of \emph{all} such submodules!). Dropping any purity entirely and adding a continuity condition on $\cal C$ instead, we arrive at the following definition. 
 
 We say a module $M$ is 
  $\kappa$-c-$\cal S$ 
  (`c' for continuity and covering) if there is a set $\cal C$, call it a $\kappa$-c-$\cal S$ \emph{covering of $M$}, of $<\kappa$-generated submodules  of $M$ with property $\cal S$ such that
  
\mbox{}\hspace{10pt}  (d){\tiny ensity} every subset of $M$ of power $<\kappa$ is contained in a member of $\cal C$,
  
 \mbox{}\hspace{10pt}  (c){\tiny ontinuity} $\cal C$ is closed under unions of chains\footnote{Some authors prefer to stress \emph{well-ordered}, but of course this is redundant: one can always find a cofinal well-ordered chain 
whose union is the same.}
 of length $<\kappa$.

If $\kappa=\aleph_1$, we may confine ourselves to chains of order type $\omega$ in condition (c), for every countable ordinal has cofinality $\omega$.

In \cite{EM}, for $\cal S$ the class of free modules, this is called `$\kappa$-free,' while $\kappa$-$\circ$-free is called  `$\kappa$-free in the weak sense'  \cite[Def.IV.1.1, p.83, and p.84]{EM}.
 Notice, $\kappa$-$\circ$-$\cal S$ is as $\kappa$-$c$-$\cal S$ without the continuity condition (c). 

In the proof of Theorem \ref{alreadyML}   for  uncountable rings we will need the following result from \cite{BT}. (The authors' terminology is different: what we call $\kappa$-c-projective, they simply call `$\kappa$-projective' (following the template of \cite[Def.IV.1.1]{EM}),  while they call `weakly  $\kappa$-projective' what we call $\kappa$-$*$-projective.)

\begin{fact}\label{BT}\mbox{}\cite[Lemma 1.3]{BT}. A module is flat and Mittag-Leffler if and only if it is 
  $\aleph_1$-c-projective. 
  A left module over a left hereditary ring is flat Mittag-Leffler if and only if it is $\aleph_1$-projective.
\end{fact}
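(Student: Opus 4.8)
The plan is to reduce the hereditary case to the general one and to prove the general equivalence using two staples of the Raynaud--Gruson theory of flat Mittag-Leffler modules: first, that a \emph{countably generated} module is flat and Mittag-Leffler if and only if it is projective; second, that in any flat Mittag-Leffler module every countable subset lies in a countably generated \emph{pure} submodule (which is then flat Mittag-Leffler, hence projective, by the first fact). Projective modules are always flat and Mittag-Leffler, and both flatness and the Mittag-Leffler property pass to pure submodules, so all this will be used freely.

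Granting the first equivalence, the second follows quickly. If $R$ is left hereditary then submodules of projectives are projective; so if $M$ is flat Mittag-Leffler, the density of the $\aleph_1$-c-projective covering produced below shows that every countably generated submodule of $M$ lies inside a countably generated projective submodule and hence is itself projective --- that is, $M$ is $\aleph_1$-projective. Conversely, if $M$ is $\aleph_1$-projective, take $\cal C$ to be the set of \emph{all} countably generated submodules of $M$: each is projective by hypothesis, the family is trivially dense, and a union of a countable chain of countably generated submodules is again countably generated, hence again in $\cal C$; so $M$ is $\aleph_1$-c-projective and the first equivalence applies.

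For the first equivalence, the forward direction is the easier one: take $\cal C$ to be the set of all countably generated pure submodules of $M$. Each such $N$ is a pure submodule of the flat Mittag-Leffler module $M$, hence flat Mittag-Leffler, hence projective; density is exactly the second fact recalled above; and a union of a countable chain of countably generated pure submodules is countably generated and pure, so $\cal C$ is closed under countable unions. Thus $M$ is $\aleph_1$-c-projective. For the backward direction, let $\cal C$ be an $\aleph_1$-c-projective covering of $M$. Since every element lies in some member and a countable generating set of the sum of two members lies in a third, $\cal C$ is directed and $M=\bigcup\cal C$; as its members are projective, hence flat, $M$ is flat. For the Mittag-Leffler property I would use the pp-formula criterion: $M$ is Mittag-Leffler if and only if, for every finite tuple $\bar a$ from $M$, the pp-type $\pp^M(\bar a)$ is freely realized, i.e.\ generated, in the entailment order $\leq_R$, by a single pp formula that holds of $\bar a$ in $M$. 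Fix $\bar a$; starting from a member of $\cal C$ containing it and alternately (i) adjoining, for every tuple $\bar b$ already present and every pp formula $\psi$ with $M\models\psi(\bar b)$, a witness for $\psi(\bar b)$ chosen in $M$, and (ii) re-absorbing the result into $\cal C$ by density, build an $\omega$-chain $P_0\subseteq P_1\subseteq\cdots$ in $\cal C$. By continuity its union $P$ again lies in $\cal C$, and by construction $P$ is a pure submodule of $M$ containing $\bar a$. Then $P$ is projective, hence Mittag-Leffler, so $\pp^P(\bar a)$ is freely realized by some $\phi$; purity of $P$ in $M$ gives $\pp^M(\bar a)=\pp^P(\bar a)$, so $\phi$ freely realizes $\pp^M(\bar a)$ as well. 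As $\bar a$ was arbitrary, $M$ is Mittag-Leffler.

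The step I expect to be the main obstacle is the chain construction just used, when $R$ is uncountable: a countably generated submodule then carries up to $|R|$ purity requirements, so a single pass of step (i) threatens to produce an uncountably generated module and the naive bookkeeping collapses. For countable $R$ there are only countably many pp formulas and the construction goes through verbatim; the general case is where the real content of \cite[Lemma 1.3]{BT} should lie, presumably by arranging the approximation to stay within the countably generated realm, or by exploiting that the Mittag-Leffler property is already determined by countable data.
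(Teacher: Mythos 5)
Your forward direction (flat Mittag-Leffler $\Rightarrow$ $\aleph_1$-c-projective) and your reduction of the hereditary-ring statement to the first equivalence are both correct, and the converse half of the hereditary case is exactly the observation the paper records as Corollary \ref{aleph1}: in an $\aleph_1$-projective module the set of \emph{all} countably generated submodules is an $\aleph_1$-c-projective covering. Be aware, though, that the paper gives no proof of this Fact at all --- it is quoted verbatim from \cite[Lemma 1.3]{BT} --- so the real question is whether your argument actually closes the one implication that makes the citation necessary.

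It does not, and you correctly identify where: the implication ``$\aleph_1$-c-projective $\Rightarrow$ Mittag-Leffler'' over an uncountable ring. Your chain construction must, at each stage, adjoin witnesses for \emph{every} pp formula in the pp type of \emph{every} tuple of the current approximation; since there are up to $|R|+\aleph_0$ pp formulas, a single pass already yields a module that need not be countably generated, so it cannot be re-absorbed into the covering family $\cal C$, whose members are countably generated. This is not a removable bookkeeping issue: the members of an $\aleph_1$-c-projective covering are \emph{not} assumed pure in $M$ (that is precisely what distinguishes $\aleph_1$-c-projective from $\aleph_1$-$*$-projective, for which the implication is easy via Fact \ref{MThm}(2)), and extracting from the continuity clause (c) the weaker-than-purity condition that still forces pp types to be finitely generated is the actual content of \cite[Lemma 1.3]{BT}, resting on Herbera and Trlifaj's analysis of dense systems of submodules. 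The paper itself is explicit that it imports this result precisely to handle uncountable rings in Theorem \ref{alreadyML}, and that for countable rings ``the proof simplifies considerably'' --- which is exactly, and only, the case your argument covers.
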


This description of flat Mittag-Leffler modules, over \emph{countable} left hereditary rings (which applies, note,  to the first Weyl algebra over a field of characteristic 0 \cite[Exples.2.32(h)]{L}), was  contained  in \cite[Cor.6.5]{habil}.

Let us isolate the general half of the last statement for further reference. It follows from the simple observation that, in an $\aleph_1$-projective module, the set of all countably generated submodules constitutes an $\aleph_1$-c-projective covering. (For countable rings the result  was contained in \cite[Fact 6.4]{habil}, for strongly non-singular semi-hereditary Goldie rings of arbitrary cardinality in \cite[Prop.9]{AF'}.)

\begin{cor}\label{aleph1}
Over any ring,  $\aleph_1$-projective modules  are flat and Mittag-Leffler (in particular,  $\aleph_1$-free modules are).
\end{cor}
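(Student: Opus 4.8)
The plan is to read the result off Fact~\ref{BT}: since that fact identifies the flat Mittag-Leffler modules as precisely the $\aleph_1$-c-projective ones, it suffices to show that every $\aleph_1$-projective module is $\aleph_1$-c-projective. The parenthetical then follows at once, because free modules are projective, so $\aleph_1$-freeness implies $\aleph_1$-projectivity and the main assertion applies.

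So let $M$ be $\aleph_1$-projective, i.e.\ (in the notation of Section~\ref{cd}, with $\cal S$ the class of projective modules) every countably generated (equivalently, $<\aleph_1$-generated) submodule of $M$ is projective. I would take $\cal C$ to be the set of \emph{all} countably generated submodules of $M$ and check that it is an $\aleph_1$-c-projective covering of $M$. By hypothesis each member of $\cal C$ is a $<\aleph_1$-generated submodule with property ``projective.'' Density (d) is clear: every subset of $M$ of power $<\aleph_1$ is countable and so generates a countably generated submodule, which lies in $\cal C$ and contains it. For continuity (c) it is enough, by the remark at the end of Section~\ref{cd}, to handle a chain in $\cal C$ of order type $\omega$; the union of such a chain is a countable union of countably generated submodules, hence countably generated, hence again in $\cal C$. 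Thus $\cal C$ witnesses that $M$ is $\aleph_1$-c-projective, and Fact~\ref{BT} yields that $M$ is flat and Mittag-Leffler.

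I do not expect any real obstacle: the content sits entirely in Fact~\ref{BT} (and the theory behind it), while what is added here is only the ``simple observation'' indicated just before the corollary. The one point that deserves a second of attention is closure of the covering family under the unions demanded by (c) --- which is exactly why one must take \emph{all} countably generated submodules rather than some sparser distinguished subfamily --- and this is settled by the trivial cardinal arithmetic that a countable union of countably generated modules is countably generated.
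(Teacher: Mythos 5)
Your proof is correct and is essentially the paper's own argument: the paper derives the corollary from Fact~\ref{BT} via exactly the observation that in an $\aleph_1$-projective module the set of \emph{all} countably generated submodules is an $\aleph_1$-c-projective covering. Your verification of (d) and (c) just spells out the details the paper leaves as a ``simple observation.''
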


\subsection{Some ring-theoretic properties}\label{ring}
Following \cite[p.151]{Hat}, call a ring \emph{left PP} (resp.\  \emph{PF}) if every principal left ideal is projective (resp.\ flat). Following \cite[Def.2.1]{MD}, call a ring  \emph{left P-coherent} if every principal left ideal is finitely presented or, equivalently, the left annihilator of any ring element is finitely generated.\footnote{It was pointed out in \cite[before Lemma 4.10]{MD} that Nicholson's `left morphic rings' are left P-coherent.} As `finitely presented + flat = finitely generated projective' (which was also proved in \cite[Lemma 1]{Hat}), we see that `PF + P-coherent = PP.'

A \emph{Dedekind prime ring} is a two-sided hereditary and noetherian prime ring without idempotent two-sided ideals \cite[5.2.10+5.6.3]{MR}.  A \emph{Dedekind domain} is a  two-sided  hereditary  and noetherian domain without idempotent   two-sided ideals  \cite[5.2.11]{MR}.  (Throughout,  \emph{domain} is used to mean, not necessarily commutative, `ring without zero-divisors.')

We will make  repeated use of some classical results on projective modules over semi-hereditary rings. Albrecht  \cite{Alb} showed that every left projective over a left semi-hereditary ring is a direct sum of   finitely generated left ideals, see also  \cite[39.13(2)]{W}. Bass extended this to the other side, by showing that right projectives are direct sums of duals (by homing into the ring) of  finitely generated left ideals \cite{Bas}, see also \cite[Comments, Ch.0]{C} or  \cite[Comments, Ch.1]{C'}. Thus, all of these are free if all finitely generated left ideals are free, which leads to another kind of ring. Given a cardinal number $\lambda$, a left \emph{$\lambda$-fir} is a ring in which every $\leq\lambda$-generated left ideal is free of unique rank \cite[Sect.1.2]{C} or  \cite[Sect.2.2]{C'}. For finite $\lambda$, this turns out to be symmetric, \cite[Thm.1.1.3]{C} or \cite[Thm.2.3.1]{C'}, while for infinite $\lambda$ it is not,  \cite[Sect.1.2]{C} or  \cite[Thm.2.10.3 and thereafter]{C'}. A \emph{semi-fir} is a ring that is an $n$-fir for every natural number $n$. Clearly, this too is  left-right symmetric. Note, a $1$-fir is simply a domain, so all of these rings are domains. Thus, semi-firs are two-sided semi-hereditary domains, while left firs are left hereditary domains. 

Obviously, a left principal ideal domain is a left fir, while a left or right  B\'ezout domain is a semi-fir. In fact, a ring is a left  B\'ezout domain if and only if it is a $2$-fir satisfying the left Ore condition,  \cite[Prop.1.1.3]{C} or \cite[Prop.2.3.17]{C'}.

We quote for later reference, cf.\ \cite[Thm.2.3.11]{C'}.\footnote{Thank you, Mike!}

\begin{fact}[Albrecht/Bass]\label{Alb} 
 Every projective module (left  or right) over a left semi-hereditary ring decomposes into a direct sum of finitely generated projective modules. Over a semi-fir, all  projective modules are free.
 \end{fact}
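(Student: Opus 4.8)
The plan is to reduce everything, via Kaplansky's theorem, to countably generated projective modules, present such a module as a countable direct sum of finitely generated projectives, and note that over a semi-fir those finitely generated pieces are automatically free. The only place left semiheredity enters is through the standard equivalent formulation that finitely generated submodules of projective left $R$-modules are again projective. So, by Kaplansky's theorem, it suffices to treat a countably generated projective left module $P$; I would fix a countable generating sequence $x_1,x_2,\dots$ of $P$ and an embedding of $P$ as a direct summand of a free module $F$.

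For this countable case I would build an increasing chain $0=P_0\leq P_1\leq\cdots$ of finitely generated submodules of $P$, each \emph{pure} in $P$, with $\bigcup_nP_n=P$. Granting the chain, each inclusion $P_n\leq P_{n+1}$ is pure (purity passes to intermediate submodules), while $P_{n+1}$, being a finitely generated submodule of the projective $P$, is finitely generated projective, hence finitely presented; so $P_{n+1}/P_n$ is finitely presented (quotient of a finitely presented module by a finitely generated submodule), and a pure epimorphism onto a finitely presented module splits. Thus $P_{n+1}=P_n\oplus C_{n+1}$ with $C_{n+1}$ finitely generated projective, and with $C_1:=P_1$ one gets $P=\bigcup_nP_n=\bigoplus_nC_n$, a direct sum of finitely generated projective modules. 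The chain itself is produced one step at a time: given $P_n$ finitely generated and pure in $P$, apply the key lemma below to the finitely generated submodule $P_n+Rx_{n+1}$ to obtain a finitely generated pure submodule $P_{n+1}\leq P$ containing it; since $x_{n+1}\in P_{n+1}$ this forces $\bigcup_nP_n=P$.

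The key lemma --- and this is the hard part --- states that over a left semihereditary ring every finitely generated submodule of a projective left module is contained in a finitely generated pure submodule. The approach I would take exploits the embedding $P\leq F$ with $F$ free: a finitely generated $A\leq P$ lies in the finitely generated free direct summand $F_J$ of $F$ spanned by the finitely many basis vectors occurring in a generating set of $A$, and the task is to manufacture, from $F_J$ and the projection $F\to P$, a finitely generated submodule of $P$ that contains $A$ and is pure in $P$ --- keeping everything in sight finitely generated projective (by the equivalent form of the hypothesis above) and exploiting that a finitely presented pure submodule is a direct summand. Controlling purity while staying finitely generated is precisely the delicate point; this is Albrecht's contribution in the left-module setting, and Bass's argument for the right-module statement is parallel, with the duals $\Hom_R({}_RI,{}_RR)$ of finitely generated left ideals $I$ replacing the ideals themselves.

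Finally, for the semi-fir assertion: a semi-fir is two-sided semihereditary, so by the above any projective module decomposes as $P=\bigoplus_iP_i$ with each $P_i$ finitely generated projective, say $P_i\oplus Q_i\cong R^{m_i}$. Then $P_i$ is generated by the $m_i$ images of the standard basis vectors, i.e.\ it is a $\leq m_i$-generated submodule of a free module; since $R$, being a semi-fir, is in particular an $m_i$-fir, every such submodule is free by \cite[Thm.2.3.1]{C'}. Hence $P=\bigoplus_iP_i$ is free.
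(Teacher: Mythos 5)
First, a point of reference: the paper offers no proof of this Fact at all --- it is quoted with citations (Albrecht \cite{Alb}, Bass \cite{Bas}, Cohn \cite[Thm.2.3.11]{C'}) --- so your argument has to stand on its own. Its skeleton is the standard and correct architecture: Kaplansky's theorem, an exhaustive chain $P_0\leq P_1\leq\cdots$ of finitely generated pure (equivalently, since these are finitely presented, direct-summand) submodules, and splitting each link because a pure exact sequence with finitely presented cokernel splits. But there is a genuine hole exactly where you flag ``the hard part'': the key lemma is stated, its setup is described, and it is then attributed to Albrecht rather than proved. Since everything else is routine, that lemma \emph{is} the theorem, and deferring it leaves the proof incomplete. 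For the record it is short: with $F=P\oplus Q$ free and $A\leq P$ finitely generated, put $A$ inside a finitely generated free summand $F_J$, write $F=F_J\oplus F'$, and let $q\colon F\to Q$ be the projection along $P$. Then $q(F_J)$ is a finitely generated submodule of the free module $F$, hence projective by left semiheredity, so $q|_{F_J}$ splits and $F_J=(F_J\cap P)\oplus U$ with $U\cong q(F_J)$. Thus $F_J\cap P$ is a finitely generated direct summand of $F$ lying inside $P$, hence (modular law) a direct summand of $P$ containing $A$. That closes the left-module case.

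The more serious defect is the claim that Bass's right-module statement is ``parallel.'' Semiheredity is not left--right symmetric, so over a left semihereditary ring a finitely generated submodule of a projective \emph{right} module need not be projective; this breaks both your key lemma and the step ``$P_{n+1}$ is a finitely generated submodule of the projective $P$, hence finitely generated projective, hence finitely presented'' on the right-hand side. Bass's theorem needs a genuinely different argument (via the dual basis lemma and the duals $\Hom({}_RI,{}_RR)$ of finitely generated left ideals), which your proposal does not contain, so the right-module half of the Fact remains unproved. The semi-fir paragraph, by contrast, is fine: granting the decomposition, each summand is a $\leq m_i$-generated submodule of a free module, hence free by the $m_i$-fir property.
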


All unexplained ring- or module-theoretic terminology can be found in \cite{C}, \cite{C'}, \cite{L}, \cite{S}, or \cite{W}.

\section{$\cal F$-atomic modules}\label{F-at} 

The significance of $\cal F$-atomic modules lies in the fact that they are exactly the $\cal K$-Mittag-Leffler modules for   $\cal K=\D\langle\cal F\rangle$, the definable subcategory dual (with respect to elementary duality) to the definable subcategory generated by $\cal F$, cf.\  Fact \ref{MThm}(1) below. 

A module is said to be \emph{$\cal F$-atomic}\footnote{Beware, the usage of this term in \cite{habil} is slightly different.} if every finite tuple in it has its pp type $\cal F$-generated by some pp formula---we also say, all pp types realized are \emph{$\cal F$-finitely generated}, i.e., every given tuple $\bar{a}$ in the  $\cal F$-atomic module $A$ satisfies a pp formula $\phi$ that  $\cal F$-implies any other formula  $\bar{a}$ may satisfy in $A$. The latter means that $\phi\leq_{\cal F} \psi$ for every $\psi$ in the pp type of $\bar{a}$. As before,  $\leq_{\cal F}$ is the partial ordering in the lattice of pp formulas restricted to modules in $\cal F$, i.e., $\phi\leq_{\cal F} \psi$ if and only if $\phi(F)\subseteq \psi(F)$ for every $F\in \cal F$.

\subsection{Free realizations}
 $\cal F$-atomic modules, at least in the countably generated case, can be conveniently mapped to modules in $\cal F$ provided they share some pp formula behavior---just like  free realizations do.
Free realizations of formulas were introduced by Prest as a generalization of presentations of structures by generators and relations where  the relations can appear in the form of a pp formula (hence, with existential quantifiers involved) \cite{P}, \cite{P2}. 

We need only one instance of this phenomenon, namely for $\cal F=\Tf$, the class of torsion-free modules. However, the proofs are just the same, so we keep the generality for later reference.
It should be mentioned  that these results, and especially their proofs, go in essence back to \cite{habil}.

\begin{lem}
Suppose  $A$ is an $\cal F$-atomic module generated by $a_0, a_1, \ldots$. Let, for all $i$, the pp type of $(a_0, \ldots, a_i)$ be $\cal F$-generated by the pp formula $\phi_i=\phi_i(x_0, \ldots, x_i)$.\footnote{Note, this alone suffices to make $A$ an $\cal F$-atomic module.} Then $A$ is a \emph{free realization of the sequence $\phi_0, \phi_1, \ldots$ for $\cal F$} in the sense that, whenever a countable sequence $c_0, c_1, \ldots$ in a module $F\in \cal F$ is such that, for all $i<\omega$, the tuple $(c_0, \ldots, c_i)$ satisfies $\phi_i$, then there is a map $A\to F$ sending $a_i$ to $c_i$ for all $i$.
\end{lem}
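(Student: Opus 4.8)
The plan is to build the map $A\to F$ as a direct limit of maps defined on the finitely generated submodules $A_i=\sum_{j\le i}Ra_j$, using at each stage the defining property of $\cal F$-atomicity together with the hypothesis that $(c_0,\dots,c_i)$ satisfies $\phi_i$ in $F\in\cal F$. First I would recall the standard fact (free realization in the sense of Prest) that a map between modules sending a tuple $\bar a$ to a tuple $\bar c$ extends to a well-defined $R$-homomorphism on the submodule generated by $\bar a$ precisely when every pp formula (equivalently, every linear-equation condition, i.e.\ every element of the pp type) satisfied by $\bar a$ is also satisfied by $\bar c$; since $A$ is generated by the $a_i$, giving a map $A\to F$ is the same as giving compatible maps $A_i\to F$ with $a_i\mapsto c_i$.

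So the key step is: for each $i$, the assignment $a_j\mapsto c_j$ ($j\le i$) extends to an $R$-homomorphism $A_i\to F$. By $\cal F$-atomicity, the pp type of $(a_0,\dots,a_i)$ in $A$ is $\cal F$-generated by $\phi_i$, which means $\phi_i\le_{\cal F}\psi$ for every $\psi$ in that pp type. Now $(c_0,\dots,c_i)$ satisfies $\phi_i$ in $F$, and $F\in\cal F$, so by $\phi_i\le_{\cal F}\psi$ the tuple $(c_0,\dots,c_i)$ also satisfies every $\psi$ in the pp type of $(a_0,\dots,a_i)$; in particular it satisfies every equation $\sum_j r_jx_j=0$ that $(a_0,\dots,a_i)$ satisfies. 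Hence the map $a_j\mapsto c_j$ is well defined on $A_i$ and $R$-linear. The only subtlety worth a sentence is that pp formulas, not merely homogeneous linear equations, must be respected for the extension to be canonical/compatible — but respecting all of the pp type (which includes the quantifier-free part) is exactly what we just extracted, so this is automatic.

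Finally I would check compatibility of the $A_i\to F$: the restriction of the map $A_{i+1}\to F$ to $A_i$ sends $a_j\mapsto c_j$ for $j\le i$, hence agrees with the map $A_i\to F$ by the uniqueness just noted (a homomorphism out of $A_i$ is determined by its values on the generators $a_0,\dots,a_i$). Therefore the $A_i\to F$ assemble into a homomorphism on $\bigcup_i A_i=A$, sending $a_i\mapsto c_i$ for all $i$, as required.

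I do not expect a genuine obstacle here; the content is entirely in the bookkeeping. The one point to state carefully is why $\cal F$-atomicity delivers what is needed at each finite stage — namely that "$\phi_i$ $\cal F$-generates the pp type of $(a_0,\dots,a_i)$" forces $(c_0,\dots,c_i)\models\phi_i$ in a module of $\cal F$ to satisfy the \emph{entire} pp type of $(a_0,\dots,a_i)$ — and then to invoke the free-realization criterion for extending a generator assignment to a homomorphism. Everything else is the routine passage to the direct limit over the chain $A_0\subseteq A_1\subseteq\cdots$.
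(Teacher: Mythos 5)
Your proof is correct and is essentially the paper's argument: the whole content is that any relation $\sum_{j\le i} r_j a_j=0$ gives an equation in the pp type of $(a_0,\dots,a_i)$, hence is $\cal F$-implied by $\phi_i$, hence holds of $(c_0,\dots,c_i)$ in $F\in\cal F$, so the generator assignment is well defined. The paper simply defines the map on all of $A$ at once and checks well-definedness; your decomposition into the chain $A_0\subseteq A_1\subseteq\cdots$ with a compatibility check is only extra bookkeeping around the same step.
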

\begin{proof}
All we need to do is verify that the map indicated is well defined. So let $\sum_{j\leq i} r_j a_j =0$ in $A$ for some ring elements $r_j$. Then the pp formula $\sum_{j\leq i} r_j x_j =0$ (an equation in this case) is satisfied by the tuple $(a_0, \ldots, a_i)$ in $A$ and therefore $\cal F$-implied by $\phi_i$. In particular, $\phi_i$ implies $\sum_{j\leq i} r_j x_j =0$ in $F$. Since, in $F$,  the tuple $(c_0, \ldots, c_i)$ satisfies the former, it does satisfy also the latter, hence $\sum_{j\leq i} r_j c_j =0$, as desired.
\end{proof}

\begin{prop}\label{freereal}
 For every tuple $\bar{a}$ in  a countably generated $\cal F$-atomic module  $A$, there is a pp formula $\phi$ that \emph{$(F, \bar{a})$ freely realizes for $\cal F$} in the sense that, whenever a tuple $\bar{c}$ (of the same length) satisfies $\phi$ in a module $F\in \cal F$, then there is a map  $A\to F$ sending $\bar{a}$ to $\bar{c}$.
\end{prop}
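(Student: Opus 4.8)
The plan is to reduce the statement about a finite tuple $\bar{a}$ to the statement about the sequence of all generators, which is handled by the preceding Lemma. Write $A$ as generated by a countable sequence $a_0, a_1, \ldots$, arranged so that the given tuple $\bar{a}$ is an initial segment of that list (or at least so that all its entries occur among the $a_j$ with $j \le n$ for some $n$; reordering finitely many generators is harmless). Since $A$ is $\cal F$-atomic, for each $i$ the pp type of $(a_0, \ldots, a_i)$ is $\cal F$-generated by some pp formula $\phi_i = \phi_i(x_0, \ldots, x_i)$. By the Lemma, $A$ is a free realization of the sequence $\phi_0, \phi_1, \ldots$ for $\cal F$: any countable sequence $c_0, c_1, \ldots$ in a module $F \in \cal F$ satisfying all the $\phi_i$ admits a map $A \to F$ sending $a_i \mapsto c_i$.

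Next I would manufacture the single formula $\phi$ for $\bar{a}$. The natural candidate is $\phi_n$ (with $n$ chosen so that the coordinates of $\bar{a}$ are among $x_0, \ldots, x_n$), possibly composed with the coordinate projection picking out the variables corresponding to $\bar{a}$. The key point to verify: given a tuple $\bar{c}$ in $F \in \cal F$ satisfying $\phi$, I must produce a map $A \to F$ taking $\bar{a}$ to $\bar{c}$. The obstacle is that $\phi = \phi_n$ constrains only the first $n+1$ generators $a_0, \ldots, a_n$, whereas the Lemma wants witnesses $c_i$ for \emph{all} $i < \omega$ satisfying \emph{all} the $\phi_i$. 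So after fixing $c_0, \ldots, c_n$ (the entries of $\bar{c}$, suitably matched up), I need to extend to a full sequence $c_0, c_1, \ldots$ in $F$ still satisfying every $\phi_i$. This is where $\cal F$-atomicity of $A$ and the coherence of the $\phi_i$'s must be used: since $\phi_{i+1}$ is in the pp type of $(a_0, \ldots, a_{i+1})$, the formula $\exists x_{i+1} \phi_{i+1}(x_0, \ldots, x_{i+1})$ lies in the pp type of $(a_0, \ldots, a_i)$, hence is $\cal F$-implied by $\phi_i$; therefore any tuple in $F$ satisfying $\phi_i$ extends by one coordinate to a tuple satisfying $\phi_{i+1}$. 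Iterating this countably often (a routine recursion, choosing each new $c_{i+1}$ as a witness to the relevant existential) produces the required full sequence in $F$ extending $\bar{c}$, and the Lemma then supplies the map $A \to F$ with $a_i \mapsto c_i$, in particular $\bar{a} \mapsto \bar{c}$.

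The only genuinely delicate point, and the one I expect to be the main obstacle, is the bookkeeping that guarantees $\phi = \phi_n$ (restricted to the variables naming $\bar{a}$) really $(F,\bar{a})$-freely realizes $\bar{a}$: one must check both that $\bar{a}$ itself satisfies $\phi$ in $A$ (immediate, since $(a_0,\ldots,a_n)$ satisfies $\phi_n$) and that the extension procedure above never gets stuck, which rests precisely on the implications $\phi_i \leq_{\cal F} \exists x_{i+1}\, \phi_{i+1}$ holding because $\phi_i$ $\cal F$-generates the pp type of $(a_0,\ldots,a_i)$ and that existential projection is a pp formula in that type. Everything else — the reordering of generators, passing between $\phi_n$ and its projection onto the coordinates of $\bar{a}$, and the appeal to the Lemma — is routine. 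I would present the recursion explicitly but keep the verification that each step's formula lies in the appropriate pp type to a single sentence, since it is just the definition of $\cal F$-atomicity together with closure of pp types under existential quantification of tail variables.
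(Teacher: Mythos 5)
Your proposal is correct and follows essentially the same route as the paper: list $\bar{a}$ as an initial segment of a generating sequence, take $\phi$ to be the $\cal F$-generator $\phi_n$ of the pp type of that initial segment, use the implications $\phi_i\leq_{\cal F}\exists x_{i+1}\,\phi_{i+1}$ (which hold by $\cal F$-atomicity) to extend $\bar{c}$ recursively to a full sequence in $F$ satisfying all the $\phi_i$, and then invoke the preceding Lemma. The only cosmetic difference is that the paper simply prepends the tuple $\bar{a}$ to the generator list, so no coordinate projection is needed.
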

\begin{proof}
Let $\bar{a}=(a_0, \ldots, a_k)$ and write the generators in question as $a_{k+1}, a_{k+2}, \ldots$. 
By $\cal F$-atomicity, we choose $(i+1)$-place pp formulas $\phi_i$ as in the lemma, for all $i<\omega$. As the pp formula $\exists x_{i} \phi_{i}$ is satisfied by the $i$-tuple $(a_0, \ldots, a_{i-1})$,  whose pp type is $\cal F$-generated by $\phi_{i-1}$, we have $\phi_{i-1}\leq_{\cal F}\exists x_{i} \phi_{i}$ for all $i$. Consequently,  the formulas $\exists x_{k+1} x_{k+2} \ldots  x_{i}\phi_{i}$,  for all $i>k$, are $\cal F$-implied by  $\phi:=\phi_k$. 

This allows us to choose a sequence $c_{k+1}, c_{k+2}, \ldots$ in $F$ in such a way that $c_0, c_1, \ldots$ plays the same role in $F$ as it does in the lemma, with the extra information that it starts with the tuple $\bar{c}$. Therefore, the map the lemma yields maps $\bar{a}=(a_0, \ldots, a_k)$ to $\bar{c}=(c_0, \ldots, c_k)$ (entry by entry), just as claimed.
\end{proof}

Note, by no means has the module $A$ itself to be in $\cal F$.

\subsection{Pure projectivity}
We give a similar application of the above lemma showing that countably generated  $\cal F$-atomic modules are projective with respect to pure epimorphisms emanating from members of  $\cal F$. This result is a variant of a generalization given in \cite[Lemma 3.9]{habil}  of Raynaud and Gruson's well known result  that countably generated Mittag-Leffler modules are pure-projective---see \cite[Prop.1.3.26]{P2}, whose model-theoretic proof is the one given in \cite[Lemma 3.9]{habil}, reproduced here with the obvious adjustments.

\begin{prop}\label{Fpureproj}
Every pure epimorphism from a module from $\cal F$ onto a countably generated $\cal F$-atomic module splits.
\end{prop}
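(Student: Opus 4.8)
Let $f\colon F\to A$ be the given pure epimorphism, with $F\in\cal F$ and $A$ countably generated and $\cal F$-atomic. The plan is to split $f$ by lifting a generating sequence of $A$ back along $f$ in a way that stays compatible with the free-realization data supplied by the free-realization Lemma above. So fix generators $a_0,a_1,\dots$ of $A$ and, by $\cal F$-atomicity, pp formulas $\phi_i=\phi_i(x_0,\dots,x_i)$ that $\cal F$-generate the pp type of $(a_0,\dots,a_i)$, exactly as in that Lemma. By the Lemma it then suffices to produce a \emph{single} sequence $c_0,c_1,\dots$ in $F$ with $f(c_i)=a_i$ and $F\models\phi_i(c_0,\dots,c_i)$ for all $i$: the Lemma yields a homomorphism $g\colon A\to F$ with $g(a_i)=c_i$, whence $fg$ fixes every generator $a_i$, so $fg=\mathrm{id}_A$ and $g$ is the desired section.

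The one input about purity I would use, repeatedly, is the standard fact that a pure epimorphism $f\colon F\to A$ reflects pp formulas along preimages: if $\theta$ is a pp formula and $A\models\theta(\bar b)$, then some preimage tuple $\bar c\in F$ (i.e.\ $f(\bar c)=\bar b$) satisfies $\theta(\bar c)$ in $F$ --- one lifts the parameters and the existential witnesses arbitrarily, observes that the resulting discrepancy lies in $\ker f$, and removes it using the purity of $\ker f$ in $F$. Granting this, I construct the $c_i$ by induction on $i$. The base case $i=0$ is immediate, since $A\models\phi_0(a_0)$ gives $c_0\in F$ with $f(c_0)=a_0$ and $F\models\phi_0(c_0)$. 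For the step, assume $c_0,\dots,c_n\in F$ are chosen with $f(c_j)=a_j$ and $F\models\phi_n(c_0,\dots,c_n)$. Since $F\in\cal F$ and $\phi_n\leq_{\cal F}\exists x_{n+1}\,\phi_{n+1}$ (this $\cal F$-implication is recorded in the proof of Proposition \ref{freereal}), there is $d\in F$ with $F\models\phi_{n+1}(c_0,\dots,c_n,d)$. Applying $f$ gives $A\models\phi_{n+1}(a_0,\dots,a_n,f(d))$; as the solutions $x_{n+1}$ of $\phi_{n+1}(a_0,\dots,a_n,x_{n+1})$ in $A$ form a coset of the subgroup $\phi_{n+1}(0,\dots,0,x_{n+1})(A)$ and $a_{n+1}$ is one such solution, we get $a_{n+1}-f(d)\in\phi_{n+1}(0,\dots,0,x_{n+1})(A)$. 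Now apply purity once more, this time to the one-variable pp formula $\phi_{n+1}(0,\dots,0,x_{n+1})$ and the element $a_{n+1}-f(d)$, obtaining $e\in F$ with $f(e)=a_{n+1}-f(d)$ and $F\models\phi_{n+1}(0,\dots,0,e)$. Setting $c_{n+1}:=d+e$ and using that pp-solution sets are subgroups, we get $F\models\phi_{n+1}(c_0,\dots,c_n,c_{n+1})$ and $f(c_{n+1})=a_{n+1}$, closing the induction.

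The main obstacle --- really the only one --- is coherence across stages: purity alone provides, for each fixed $i$ separately, a lift of $(a_0,\dots,a_i)$ satisfying $\phi_i$ in $F$, but nothing forces the lift at stage $i+1$ to restrict to the one at stage $i$, whereas the Lemma demands one global sequence. The inductive correction above is precisely what bridges this: from a good lift $(c_0,\dots,c_n)$ I first grab \emph{any} $\phi_{n+1}$-witness $d$ over it inside $F$ (possible because $F\in\cal F$), then perturb $d$ by an element of the homogeneous pp-set whose $f$-image is the needed correction $a_{n+1}-f(d)$ (possible because $f$ is a pure epimorphism). I expect the write-up to be essentially that of \cite[Lemma 3.9]{habil}, with ``Mittag-Leffler'' replaced by ``$\cal F$-atomic'' throughout; note that $\cal F$-atomicity enters only to supply the formulas $\phi_i$, and the hypothesis $F\in\cal F$ only through the $\cal F$-implications $\phi_n\leq_{\cal F}\exists x_{n+1}\phi_{n+1}$.
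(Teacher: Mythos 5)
Your proof is correct and is essentially the paper's own argument: the same inductive lifting of the generators along the pure epimorphism, using the $\cal F$-implications $\phi_n\leq_{\cal F}\exists x_{n+1}\phi_{n+1}$ to find a witness in $F$ and then correcting it by a purity-supplied preimage of the discrepancy in the homogeneous pp subgroup $\phi_{n+1}(\bar 0,F)$, before invoking the free-realization lemma. The only difference is cosmetic (you add the correction term where the paper subtracts it).
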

\begin{proof} Let $g: F\to A$ be a pure epimorphism with  $F\in\cal F$ and $A$ countably generated $\cal F$-atomic. Choose generators $a_i$ of $A$ and pp formulas $\phi_i$ as in the hypothesis of the lemma. Note that, as in the proof of the previous proposition, this yields 
$$\phi_{i}\leq_{\cal F}\exists x_{i+1} \phi_{i+1}$$ 
for all $i$.

To invoke the lemma in order to obtain a splitting map of $g$, we need to choose $g$-preimages $c_i$ of $a_i$ in $F$ so that the resulting sequence $c_0, c_1, \ldots$ satisfies the formulas $\phi_i$ just as $F$ does in the statement of the lemma.

For $c_0$ we take any $g$-preimage of $a_0$ in
$\phi_0(F)$\,.
(We use purity without ado to pick preimages in the same pp subgroups.)  Having chosen a preimage $\bar{c}_i$ of $\bar{a}_i$ in
$\phi_i(F)$\,, apply the implication displayed above to obtain $b_{i+1}$ satisfying
$\phi_{i+1}(\bar{c}_i,z)$ in $F$ and then any preimage $b'_{i+1}$ of
$g(b_{i+1})-a_{i+1}$ in $\phi_{i+1}(\bar{0},F)$\,. (This is possible,
since, by additivity of pp formulas,  \mbox{$F\models \phi_{i+1}(\bar{c}_i,b_{i+1})$} implies
$A\models\phi_{i+1}(\bar{a}_i,g(b_{i+1}))$\, which, together with
$A\models\phi_{i+1}(\bar{a}_i,a_{i+1})$ yields
$A\models\phi_{i+1}(\bar{0},g(b_{i+1})-a_{i+1})$\,.) Then, setting
$c_{i+1}=b_{i+1}-b'_{i+1}$\,, we have $g(c_{i+1})=a_{i+1}$ and
$F\models\phi_{i+1}(\bar{c}_i,c_{i+1})$ by additivity again.
\end{proof}

By Fact \ref{MThm}(1) below, setting $\cal F=\RMod$ (or $\cal K=\ModR$) this yields at once

\begin{cor}[Raynaud--Gruson]\label{RGc.g.}\mbox{}
Every countably generated Mittag-Leffler module is pure-projective. (Hence every  countably generated flat Mittag-Leffler module is projective.)
\end{cor}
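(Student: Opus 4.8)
The plan is to derive Corollary \ref{RGc.g.} directly from Proposition \ref{Fpureproj} by specializing the class $\cal F$ to the class $\RMod$ of all left $R$-modules. First I would invoke Fact \ref{MThm}(1), which (for the choice $\cal K=\ModR$, equivalently $\cal F=\RMod$) identifies the $\RMod$-atomic modules with the $\ModR$-Mittag-Leffler modules; but $\ModR$-Mittag-Leffler is just ``Mittag-Leffler'' in the unrelativized sense, and ``$\RMod$-atomic'' means that every finite tuple satisfies a pp formula that implies (in all modules) every other pp formula it satisfies---which is precisely the usual model-theoretic characterization of Mittag-Leffler. So a module is Mittag-Leffler if and only if it is $\RMod$-atomic.

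Next I would observe that every module is a pure-epimorphic image of a projective (hence free) module: take a free module $F$ with a surjection $g\colon F\twoheadrightarrow A$ onto the given countably generated Mittag-Leffler module $A$. If $A$ is countable, $F$ may be taken countably generated as well. Since $F$ is free and $A$ is flat (being Mittag-Leffler is not needed for this last point---actually we do not even need flatness here), the surjection need not be pure in general; so instead I would pass to a pure surjection. The cleanest route: write $A$ as a direct limit of its finitely generated (equivalently, finitely presented, since $A$ is Mittag-Leffler countably generated... no) --- better, just use that any module is the pure-epimorphic image of a direct sum of finitely presented modules via the canonical map $\bigoplus_{(P,p)} P \to A$ over all maps $p\colon P\to A$ from finitely presented $P$; for $A$ countably generated this can be arranged countably generated, and such a direct sum of finitely presented modules lies in $\cal F=\RMod$ trivially. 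Then Proposition \ref{Fpureproj} applies with $\cal F=\RMod$: the pure epimorphism from this module (which is in $\RMod$, and is pure-projective being a direct sum of finitely presenteds) onto the $\RMod$-atomic module $A$ splits, so $A$ is a direct summand of a pure-projective, hence pure-projective.

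For the parenthetical second sentence, I would add: if moreover $A$ is flat, then being pure-projective it is a summand of a direct sum of finitely presented modules, and a flat summand of such is a summand of a projective (by the flatness of each finitely presented summand forcing it to be finitely generated projective, via `finitely presented + flat = finitely generated projective' recalled in Section \ref{ring}), hence projective. Alternatively, and more slickly, invoke that a flat pure-projective module is projective directly.

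The only real subtlety, and thus the main obstacle, is the passage from an arbitrary surjection onto $A$ to a \emph{pure} surjection from a module in $\cal F=\RMod$ that is itself pure-projective (so that splitting over it gives pure-projectivity of $A$). Since $\cal F=\RMod$ is everything, membership in $\cal F$ is automatic, so the issue reduces entirely to producing a pure epimorphism onto $A$ with pure-projective source---which is exactly the statement that $A$ has a pure-projective ``cover'' in the weak sense, and this is the standard fact that the canonical map from the direct sum of all finitely presented modules mapping into $A$ is a pure epimorphism. With that in hand, Proposition \ref{Fpureproj} does the rest and no further calculation is required.
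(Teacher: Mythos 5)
Your proposal is correct and is essentially the paper's own argument: the paper derives the corollary in one line by setting $\cal F=\RMod$ in Proposition \ref{Fpureproj} (via Fact \ref{MThm}(1)), reading off that every pure epimorphism onto a countably generated Mittag-Leffler module splits, which is precisely pure-projectivity; your explicit construction of a pure epimorphism from a direct sum of finitely presented modules is just a concrete instance of the same specialization. (One small slip: flatness of $A$ does not force the finitely presented summands to be flat, so your first justification of the parenthetical claim does not work as stated, but your fallback---a flat pure-projective module is projective, e.g.\ because the canonical surjection from a free module is pure and therefore splits---is exactly the ``flat $+$ pure-projective $=$ projective'' equation the paper invokes.)
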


\section{Mittag-Leffler modules}\label{ML}
\subsection{Relativized Mittag-Leffler modules}\label{KML}
Suppose ${\cal K}$ is a family of right $R$-modules. Following \cite[Sect.2.2]{habil}, a \emph{${\cal
K}$-Mittag-Leffler module} 
 is
a left $R$-module $M$ for which the canonical map
$(\prod_I N_i) \otimes M\longrightarrow\prod_I (N_i\otimes M)$
is a monomorphism for all subfamilies $\{N_i : i\in I\}$ of ${\cal K}$\,. In case  ${\cal K}=\ModR$ these are the  \emph{Mittag-Leffler modules} introduced by Raynaud and Gruson in \cite{RG}. Another precursor is Goodearl's \cite{Goo}, which investigates, under different (actually no specific) name, the case  ${\cal K}=\flat_R$.

 Note that the hypothesis of (1) in the next fact simply means that $\phi\leq_{\cal F} \psi$ if and only if $\D\psi\leq_{\cal K} \D\phi$. Since $\flat_R$ and $\{R_R\}$ generate the same definable subcategory (by Fact \ref{flat}), as a consequence of (1) one also obtains Goodearl's result that $\flat_R$-Mittag-Leffler and $R_R$-Mittag-Leffler are the same \cite[Thm.1]{Goo}.

\begin{fact} \label{MThm} 
\mbox{}\cite[Main Thm.\ 2.2 and Cor.\ 2.4]{habil}{\rm , see also \cite{MLII}.}
\begin{enumerate}[\upshape(1)]
\item Suppose $\cal K$ and $\cal F$ are classes of, respectively, right and left modules over the same ring that generate mutually dual definable subcategories. 
 
 Then a left  module is ${\cal K}$-Mittag-Leffler if and only if it is $\cal F$-atomic.\\ In particular,    $\sharp$-Mittag-Leffler is the same as $\flat$-atomic and $\flat$-Mittag-Leffler is the same as $\sharp$-atomic.
 \item A module is ${\cal K}$-Mittag-Leffler if and only if every finite subset is contained in a pure submodule that is ${\cal K}$-Mittag-Leffler.
 \item The class  of  $\cal K$-Mittag-Leffler modules is closed under pure extensions and pure submodules.
\item A direct sum of modules is ${\cal K}$-Mittag-Leffler if and only if so is every direct summand.
\end{enumerate}
\end{fact}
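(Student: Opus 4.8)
The plan is to prove part (4), taking items (1)--(3) --- quoted from \cite{habil} --- as given (item (1) is the deepest of the four and the one I would not reprove here). Write $M=\bigoplus_{j\in J}M_j$, fix an arbitrary subfamily $\{N_i:i\in I\}$ of $\cal K$, put $P=\prod_{i\in I}N_i$, and let $\lambda_M\colon P\otimes M\to\prod_{i\in I}(N_i\otimes M)$ be the canonical map $p\otimes m\mapsto(p_i\otimes m)_i$; by definition $M$ is $\cal K$-Mittag-Leffler precisely when $\lambda_M$ is monic for every such subfamily.

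The implication ``$M$ is $\cal K$-Mittag-Leffler $\Rightarrow$ each $M_j$ is'' is immediate from part (3): a direct summand of $M$ is a pure submodule, so in fact every direct summand of $M$ is $\cal K$-Mittag-Leffler. (Alternatively, fixing $j$ and writing $M=M_j\oplus M'$: both $P\otimes(-)$ and $\prod_i(N_i\otimes(-))$ send this finite biproduct to a biproduct, so $\lambda_M=\lambda_{M_j}\oplus\lambda_{M'}$, and a retract of a monomorphism is a monomorphism.)

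For the converse, assume each $M_j$ is $\cal K$-Mittag-Leffler. The key observation is that, under the natural isomorphisms $P\otimes\bigoplus_jM_j\cong\bigoplus_j(P\otimes M_j)$ and $\prod_i\bigl(N_i\otimes\bigoplus_jM_j\bigr)\cong\prod_i\bigoplus_j(N_i\otimes M_j)$, the canonical map $\lambda_M$ becomes the composite
\[
\bigoplus_j(P\otimes M_j)\ \xrightarrow{\ \bigoplus_j\lambda_{M_j}\ }\ \bigoplus_j\prod_i(N_i\otimes M_j)\ \xrightarrow{\ \iota\ }\ \prod_i\bigoplus_j(N_i\otimes M_j),
\]
where $\iota$ is the canonical ``sum-into-product'' comparison map. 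I would verify this factorization by chasing a simple tensor $p\otimes(m_j)_j$ (note $(m_j)_j$ has finite support) through both routes: each sends it to the family $i\mapsto(p_i\otimes m_j)_j$, and this suffices since every map involved is additive. Now $\iota$ is injective for arbitrary modules $A_{ij}$ in place of the $N_i\otimes M_j$ --- an element of $\bigoplus_j\prod_iA_{ij}$ is a finitely supported family $(x_j)_{j\in J}$, which is completely determined by its image $i\mapsto(x_j(i))_j$ in $\prod_i\bigoplus_jA_{ij}$ --- while each $\lambda_{M_j}$ is monic by hypothesis, hence so is $\bigoplus_j\lambda_{M_j}$ and therefore so is the composite. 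Thus $\lambda_M$ is monic for every subfamily of $\cal K$, i.e.\ $M$ is $\cal K$-Mittag-Leffler, as required.

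The only genuine content here is the factorization triangle, and I expect the diagram chase confirming that it commutes --- i.e.\ making the two natural isomorphisms explicit enough to see they agree on simple tensors --- to be the single point demanding care rather than ideas. If one prefers to sidestep it, the converse also follows from (2) and (3) alone: by (2) it is enough to put each finite subset of $M$ into a pure submodule that is $\cal K$-Mittag-Leffler; any finite subset lies in $\bigoplus_{k\in J_0}M_k$ for a finite $J_0$, a direct summand (hence pure submodule) of $M$, and this finite direct sum is $\cal K$-Mittag-Leffler by induction on $|J_0|$ using closure under pure extensions from (3), applied to the split (so pure) exact sequence $0\to M_j\to\bigoplus_{k\in J_0}M_k\to\bigoplus_{k\in J_0\setminus\{j\}}M_k\to 0$.
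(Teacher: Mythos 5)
Your argument for part (4) is correct. Note, though, that the paper itself offers no proof of this Fact: it is quoted wholesale from \cite{habil}, and the only remark made about its proof is that parts (2)--(4) are ``immediate consequences'' of part (1), whose proof is in turn deferred to \cite{ML} and \cite[Thm.1.3.22]{P2} with the observation that one merely replaces $\leq$ by $\leq_{\cal K}$ and $\leq_{\cal F}$ throughout. So there is nothing to compare step-by-step. Of your two routes, the fallback via (2) and (3) --- finite subsets sit in finite sub-sums, which are pure and are $\cal K$-Mittag-Leffler by induction on the number of summands using closure under pure extensions --- is the one closest in spirit to the paper's ``immediate consequences'' remark. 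Your primary route, factoring the canonical map $\lambda_M$ as $\iota\circ\bigoplus_j\lambda_{M_j}$ with $\iota$ the injective sum-into-product comparison, is a clean, self-contained tensor-product computation that does not lean on (1)--(3) at all for the converse direction; it buys independence from the relativized machinery at the cost of the (routine but necessary) diagram chase you flag. Both directions check out: the simple-tensor computation you describe does show the two routes agree, $\iota$ is injective for arbitrary $A_{ij}$, and a direct sum of monomorphisms of modules is a monomorphism, so the composite, hence $\lambda_M$, is monic. The forward direction via (3) (direct summands are pure submodules) is exactly right, and even covers the stronger reading of ``every direct summand.''
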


It is not too hard to derive from this that if  $N$ is a finitely generated pure submodule of a $\cal K$-Mittag-Leffler module $M$, then  $M/N$ too is  $\cal K$-Mittag-Leffler \cite[Cor.2.4(e)]{habil}. This is no longer true for arbitrary pure submodules $N$, which is why we have to make an effort and restrict to certain rings in order to  prove that, if $M$ is $\cal K$-Mittag-Leffler,  so too is $M/\T(M)$ (with $\T(M)$ the torsion part).

The proofs of part (1) of the above fact (the others are immediate consequences) as published in \cite[Thm.2.2]{ML} and \cite[Thm.1.3.22]{P2} are executed 
only for the classical case $\cal K=\ModR$, they work, however, verbatim the same in general if  only $\leq$ in $\ModR$ gets replaced by $\leq_{\cal K}$ and in $\RMod$ by $\leq_{\cal F}$, as was done in \cite[Main Thm. 2.2]{habil}.

\subsection{Finding submodules}
The next fact goes back to the original paper of Raynaud and Gruson.

\begin{fact}\label{RG}
\mbox{}{\rm \cite[ Thm.2.2.1]{RG}, see also \cite[Thm.3.12+Rem.6.1]{habil} and \cite[Thm.1.3.27]{P2}.} 
\begin{enumerate}[\upshape(1)]
 \item  A module is Mittag-Leffler if and only if it is $\aleph_1$-$*$-pure-projective, i.e., iff every countable (finite suffices) subset is contained in a countably generated  pure-projective pure submodule.
 \item  A flat module is Mittag-Leffler if and only if it is $\aleph_1$-$*$-projective, i.e., iff every countable (finite) subset is contained in a countably generated projective pure submodule.
\end{enumerate}
\end{fact}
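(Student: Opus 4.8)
I would prove (1) in full and obtain (2) as a corollary, so let me describe (1) first. For its easy direction, recall that a pure-projective module $P$ is Mittag-Leffler: writing $P$ as a direct summand of $\bigoplus_i P_i$ with the $P_i$ finitely presented, every finite tuple of $P$ lies in a finite partial sum $\bigoplus_{i\in E}P_i$, which is finitely presented and pure in $\bigoplus_i P_i$, so the pp type of the tuple — computed anywhere, since $P$ too is pure there — is the finitely generated type it carries in that finitely presented module; thus $P$ has all pp types finitely generated, which by Fact~\ref{MThm}(1) (the case $\cal K=\ModR$, $\cal F=\RMod$) means $P$ is Mittag-Leffler. Consequently, if every finite subset of $M$ lies in a (countably generated) pure-projective pure submodule, then $M$ is Mittag-Leffler by Fact~\ref{MThm}(2).

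For the hard direction of (1), let $M$ be Mittag-Leffler, i.e.\ $\RMod$-atomic: the pp type of every finite tuple $\bar a$ of $M$ is generated by a single pp formula $\phi_{\bar a}=\exists\bar z\,\theta_{\bar a}(\bar x,\bar z)$, meaning $\phi_{\bar a}$ implies, \emph{in every module}, every pp formula that $\bar a$ satisfies in $M$. Given a countable $S\subseteq M$, I would carry out a Skolem-style closure: fix such a $\phi_{\bar a}$ for each finite tuple $\bar a$, put $G_0=S$, and, given a countable set $G_n$, for each finite tuple $\bar a$ from $G_n$ choose a witness tuple $\bar c_{\bar a}$ in $M$ with $M\models\theta_{\bar a}(\bar a,\bar c_{\bar a})$, letting $G_{n+1}$ be $G_n$ augmented by the entries of all the $\bar c_{\bar a}$; since $G_n$ is countable and each $\phi_{\bar a}$ has finitely many quantifiers, $G_{n+1}$ is again countable. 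Then $N:=\sum_{g\in\bigcup_n G_n}Rg$ is a countably generated submodule containing $S$, and it is pure in $M$: a tuple $\bar b$ from $N$ is an $R$-linear image of a tuple $\bar a$ contained in some $G_n$, so if a pp formula $\psi$ holds of $\bar b$ in $M$, the pp formula $\psi'$ obtained by substituting the relevant linear combinations (so that $\bar b\in\psi(X)$ iff $\bar a\in\psi'(X)$ in every module $X$) lies in the pp type of $\bar a$ in $M$; hence $\phi_{\bar a}\leq\psi'$ holds in all modules, and since the construction put $\bar a$ into $\phi_{\bar a}(N)$, we get $\bar a\in\psi'(N)$, i.e.\ $\bar b\in\psi(N)$. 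Finally $N$, being pure in the Mittag-Leffler module $M$, is Mittag-Leffler by Fact~\ref{MThm}(3), and, being countably generated, pure-projective by Corollary~\ref{RGc.g.} — which is what (1) claims. The ``finite suffices'' clause is automatic: the easy direction already shows the finite-subset form of the condition implies Mittag-Leffler, while Mittag-Leffler implies the countable-subset form. For (2): a projective module is flat, pure-projective and hence (by the above) Mittag-Leffler, so $\Leftarrow$ follows from Fact~\ref{MThm}(2) (the flatness of $M$ being hypothesised); for $\Rightarrow$, (1) supplies, around any finite set, a countably generated pure-projective pure submodule $N$, which is flat as a pure submodule of the flat module $M$, so in a short exact sequence $0\to K\to F\to N\to 0$ with $F$ free, $K$ is pure in $F$ (flatness of $N$) and the sequence splits (pure-projectivity of $N$), making $N$ projective.

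The step I expect to be the crux is keeping the closure $\bigcup_n G_n$ countable (equivalently, $N$ countably generated) when $R$ is uncountable: there are then $|R|$-many pp formulas, so a naive closure adding a witness for \emph{every} pp formula a tuple satisfies would not remain countable. The Mittag-Leffler hypothesis is used precisely here — it suffices to add witnesses for the \emph{single} pp-type-generating formula $\phi_{\bar a}$ of each tuple, and the universal validity of the implication $\phi_{\bar a}\leq\psi$ (in particular its validity inside the small module $N$) is exactly what transfers satisfaction of \emph{all} pp formulas from $M$ down to $N$, securing purity.
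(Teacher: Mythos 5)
Your proposal is correct and follows essentially the same route as the paper: the paper proves this via the witness-adjoining closure argument of Proposition \ref{K-RG} (whose footnote notes that for $\cal F=\RMod$ the single generating formula yields full purity of the countably generated submodule $N$, exactly your crux), then invokes Corollary \ref{RGc.g.} for pure-projectivity and the identity flat${}+{}$pure-projective${}={}$projective for part (2). The only cosmetic difference is that you also spell out the easy direction (pure-projective $\Rightarrow$ Mittag-Leffler via finitely generated pp types), which the paper leaves to Fact \ref{MThm}.
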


Of course, (2) follows from (1) utilizing the equation  flat+pure-projective = projective. (Curiously, it implies that projective modules are  $\aleph_1$-$*$-projective, which has been known since Kaplansky's theorem.)

A model-theoretic  proof of (1)  was given in \cite[Lemma 3.11]{habil}. A simplified version, \cite[Thm.1.3.27]{P2}, carries over one to one to the case of  $\cal K$-modules.  For the convenience of the reader I  indicate the proof, which is a standard algebraic/model-theoretic argument of adjoining solutions (as used, e.g., in order to produce algebraically closed fields/saturated models, see e.g.\ \cite[Sect.12.1]{R}). Purity has to be replaced however by the following weaker concept.

\begin{definition}\label{F-pure}
A submodule $N$ of an $\cal F$-atomic module $M$ is  \emph{$\cal F$-pure} if, for every tuple $\bar{a}$ in $N$, some $\cal F$-generator of  its pp type in $M$ is also contained in its pp type in $N$.
\end{definition}

 In other words,  there is an  $\cal F$-generator $\phi_{\bar{a}}$  of  the pp type of $\bar{a}$ in $M$ such that $\bar{a}\in \phi_{\bar{a}}(N)$, i.e., we demand purity only for the pp formulas that are $\cal F$-generators of pp types in $M$ that are realized in $N$.

Clearly, an $\cal F$-pure submodule of an $\cal F$-atomic module is  $\cal F$-atomic. We are ready to generalize Fact \ref{RG} to the relativized case. 

\begin{prop}\label{K-RG} Suppose $\cal K$ and $\cal F$ are classes of, respectively, right and left $R$-modules that generate mutually dual definable subcategories.

A module is  $\cal K$-Mittag Leffler if and only if every countable subset  is contained in a countably generated $\cal F$-pure submodule that is  $\cal K$-Mittag-Leffler.

Equivalently, a module is  $\cal F$-atomic  if and only if it is, in the terminology of Section \ref{cd},  $\aleph_1$-$\cal F$-`$\cal F$-atomic'.
\end{prop}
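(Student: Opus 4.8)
The plan is to prove both directions of the biconditional, following the template of the standard proof of Fact \ref{RG}(1) (as in \cite[Thm.1.3.27]{P2}, \cite[Lemma 3.11]{habil}), adapting it by replacing full purity with $\cal F$-purity and ordinary pp types with the $\cal F$-relativized ordering $\leq_{\cal F}$. By Fact \ref{MThm}(1), ``$\cal K$-Mittag-Leffler'' and ``$\cal F$-atomic'' name the same class under the stated duality hypothesis, so the two displayed statements are literally the same assertion; I will argue in the $\cal F$-atomic formulation throughout and only invoke the Mittag-Leffler language for the phrasing.

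For the easy direction ($\Leftarrow$): suppose every countable (equivalently, every finite) subset of $M$ sits inside a countably generated $\cal F$-pure submodule $N$ that is $\cal F$-atomic. Given a finite tuple $\bar a$ in $M$, pick such an $N$ containing the entries of $\bar a$. Since $N$ is $\cal F$-atomic, the pp type of $\bar a$ in $N$ is $\cal F$-generated by some $\phi$. Since $N$ is $\cal F$-pure in $M$, an $\cal F$-generator $\phi_{\bar a}$ of the pp type of $\bar a$ in $M$ is already in the pp type of $\bar a$ in $N$; comparing the two generators and using that both $\cal F$-generate within their respective types, one checks $\phi_{\bar a}$ $\cal F$-generates the pp type of $\bar a$ in $M$ — essentially because pp type in $N$ is contained in pp type in $M$, and $\cal F$-purity forces the $\cal F$-closure of what $N$ sees to capture everything. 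Hence $M$ is $\cal F$-atomic.

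For the hard direction ($\Rightarrow$): assume $M$ is $\cal F$-atomic (equivalently $\cal K$-Mittag-Leffler) and let $X_0\subseteq M$ be countable. Build an increasing chain $X_0\subseteq X_1\subseteq\cdots$ of countable subsets whose union $N$ generates the desired submodule. At each step, for every tuple $\bar a$ from $X_n$, let $\phi_{\bar a}$ be a fixed $\cal F$-generator of its pp type in $M$; since $\bar a$ satisfies $\phi_{\bar a}$ in $M$, adjoin to $X_{n+1}$ finitely many witnesses for the existential quantifiers in $\phi_{\bar a}$ realizing it on $\bar a$. There are only countably many tuples from $X_n$, each contributing finitely many new elements, so $X_{n+1}$ is countable. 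Let $N$ be the submodule generated by $\bigcup_n X_n$; it is countably generated, contains $X_0$, and is $\cal F$-pure in $M$: any tuple $\bar a$ of $N$ lies in some $X_n$, whence at stage $n+1$ we put witnesses making $\bar a\in\phi_{\bar a}(N)$, and $\phi_{\bar a}$ $\cal F$-generates the pp type of $\bar a$ in $M$. Finally $N$ is $\cal F$-atomic because it is an $\cal F$-pure submodule of the $\cal F$-atomic module $M$ (the remark just before Prop.\ref{K-RG}), equivalently $\cal K$-Mittag-Leffler by Fact \ref{MThm}(1) or (3).

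The main obstacle is the bookkeeping in the chain construction: one must verify that ``adjoining witnesses for $\cal F$-generators'' genuinely yields an $\cal F$-pure submodule, i.e., that the $\cal F$-generator chosen for $\bar a$ at the level of $M$ is the same formula one needs to be satisfied inside $N$, and that closing under the module operations generated by $\bigcup_n X_n$ does not destroy this (it does not, since pp formulas are closed under the relevant substitutions and any new tuple of $N$ already appears in some $X_n$ together with its witnesses). One should also note the reduction ``countable subset $\leftrightarrow$ finite subset'': in one direction it is trivial, and in the other it uses that a countable subset is an increasing union of finite ones together with the continuity afforded by taking the union of the countably generated $\cal F$-pure submodules produced for each finite piece — or, more simply, one just runs the construction above starting from the given countable $X_0$ directly, which is why I phrased it that way.
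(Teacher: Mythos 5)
Your construction is the same as the paper's (iterated adjunction of witnesses to the $\cal F$-generators of the pp types, repeated $\omega$ times, then passing to the generated submodule), and the easy direction is indeed immediate: by Definition \ref{F-pure}, $\cal F$-purity of $N$ in $M$ already asserts that the pp type in $M$ of every tuple from $N$ has an $\cal F$-generator, so $M$ is $\cal F$-atomic as soon as every finite tuple lies in such an $N$ --- no comparison of the two generators is needed there.

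The one step that fails as written is your verification that $N$ is $\cal F$-pure (and $\cal F$-atomic): you assert that ``any tuple $\bar a$ of $N$ lies in some $X_n$.'' It does not --- $N$ is the submodule \emph{generated} by $\bigcup_n X_n$, so a typical element of $N$ is a linear combination $\sum_j r_j a_j$ with the $a_j$ in some $X_n$, and such an element need not belong to any $X_m$; hence no witnesses were ever adjoined for it and your chain argument says nothing about its pp type. What is needed, and what the paper supplies by citing \cite[Lemma 1.5]{habil} (see also \cite[Fact 2.4]{PR}), is the separate reduction: if every tuple \emph{from a generating set} $B$ of $N$ has its pp type in $M$ $\cal F$-generated by a formula it already satisfies in $N$, then the same holds for every tuple of $N$. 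Concretely, for $\bar b=S\bar a$ with $\bar a$ a tuple from $B$ and $S$ a matrix over $R$, one passes from an $\cal F$-generator $\phi$ of the type of $\bar a$ to the formula $\exists \bar y\,(\bar x=S\bar y\wedge\phi(\bar y))$, which $\cal F$-generates the type of $\bar b$ in $M$ and is satisfied by $\bar b$ in $N$. Your parenthetical ``pp formulas are closed under the relevant substitutions'' points at exactly this, but it is contradicted by the false containment claim in the same sentence and is never carried out; state and prove (or cite) this reduction and the argument is complete.
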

\begin{proof} 
In view of Fact \ref{MThm}(1), the two statements are indeed equivalent. We work with the latter.  So let $A$ be a countable subset of an $\cal F$-atomic module $M$. For each (of the countably many) finite tuples in $A$ adjoin witnesses to the $\cal F$-generating formula of its pp type in $M$ and denote the resulting set by $A'$. Repeat the process $\omega$ times and take the union, $B$ say, of the resulting ascending chain of subsets of $M$---a countable set. Let $N$ be the submodule of $M$ generated by $B$. If $\bar{a}$ is a tuple in $B$, it is contained in one of the sets on the way, hence the witnesses are contained in the next one, and certainly in $N$. Therefore, if $p$ is the pp type of $\bar{a}$  in $M$ and $q$ is that in $N$, then $q$ is contained\footnote{This is the only difference with  \cite[Thm.1.3.27]{P2}, where, because of $\cal F=\RMod$, we have in fact $p=q$, which yields (full) purity of $N$ in $M$.}  in $p$ and it  contains the  $\cal F$-generating formula of $p$.

We have verified $\cal F$-finite generation only for pp types of tuples $\bar{a}$ \emph{in} $B$, however, it was observed already in \cite[Lemma 1.5]{habil} that this implies the same for any tuple in the submodule \emph{generated} by $B$ (see also \cite[Fact 2.4]{PR}).
Consequently, $N$ itself is  $\cal F$-atomic. 
\end{proof}

\subsection{The flat case: finding flat submodules}
We now use the flatness criterion Fact \ref{flat} with no further mention. It turns out that here too it suffices to check it on generators.

\begin{lem}
Let $N$ be a module generated by a set $A$. For  $N$ to be flat it suffices that for every tuple $\bar{a}$ of generators from $A$ and every matching pp formula $\phi(\bar{x})$ it satisfies in $N$, the submodule of $R^{l(\bar{x})}_R$ defined by $\phi$ in $_RR$ divides  $\bar{a}$  in $N$, i.e., $\phi(_RR)|\bar{a}$ in $N$.
\end{lem}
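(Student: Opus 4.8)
The plan is to use Zimmermann's criterion (Fact~\ref{flat}) in the form (ii): $N$ is flat iff $\phi(N)=\phi({}_RR)N$ for every unary pp formula $\phi$, or, what amounts to the same by passing to tuples, $\phi(N)\subseteq\phi({}_RR)N$ for every pp formula $\phi(\bar x)$ in a tuple $\bar x$ of variables. The hypothesis gives us exactly this containment, but only when the witnessing element lies in the distinguished generating set $A$ rather than being an arbitrary element of $N$. So the whole content is an elementary-amalgamation/closure argument reducing an arbitrary tuple in $N$ to a tuple of generators.

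First I would fix a pp formula $\phi(\bar x)$, say with $l(\bar x)=n$, and a tuple $\bar b\in\phi(N)$. Since $A$ generates $N$, each entry of $\bar b$ is an $R$-linear combination of finitely many elements $a_1,\dots,a_m$ of $A$; collect these into a tuple $\bar a$ of generators, so that $\bar b=\bar a T$ for a suitable $m\times n$ matrix $T$ over $R$ (acting on the right, matching the left-module convention). Now consider the pp formula $\psi(\bar y):=\exists\bar x\,(\bar x=\bar y T\wedge\phi(\bar x))$, a pp formula in the $m$-tuple $\bar y$; since $\bar b=\bar a T$ and $N\models\phi(\bar b)$, we have $N\models\psi(\bar a)$, i.e.\ $\bar a\in\psi(N)$ with $\bar a$ a tuple of generators from $A$. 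By the hypothesis applied to $\bar a$ and $\psi$, we get $\psi({}_RR)\mid\bar a$ in $N$; that is, there are tuples $\bar s_1,\dots,\bar s_k\in\psi({}_RR)$ and elements $r_1,\dots,r_k\in R$ (or rather, in the module-theoretic formulation, a finite sum $\bar a=\sum_j \bar s_j r_j$ with $r_j\in N$)—more precisely $\bar a\in\psi({}_RR)N$.

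The concluding step is then purely formal: pp formulas are preserved under the matrix map $\bar y\mapsto\bar y T$, so $\psi({}_RR)T\subseteq\phi({}_RR)$ (compute: if $\bar s\in\psi({}_RR)$ then by definition of $\psi$ in ${}_RR$ there is $\bar t$ with $\bar t=\bar s T$ and ${}_RR\models\phi(\bar t)$, i.e.\ $\bar s T\in\phi({}_RR)$). Applying $T$ on the right to the expression $\bar a\in\psi({}_RR)N$ yields $\bar b=\bar a T\in\psi({}_RR)T\,N\subseteq\phi({}_RR)N$, as required. Running this for every $\phi$ (and noting the reduction from many-place to one-place formulas is harmless, or simply invoking the many-place version of Fact~\ref{flat} as the excerpt permits) gives $\phi(N)\subseteq\phi({}_RR)N$ for all $\phi$, hence $\phi(N)=\phi({}_RR)N$ since the reverse inclusion is automatic, and $N$ is flat.

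The only mild obstacle is bookkeeping: being careful that $\psi$ really is a pp formula (it is—an existentially quantified conjunction of a system of linear equations $\bar x=\bar y T$ with the pp formula $\phi$), and keeping the sides straight, since elements of the left module are written as left-linear combinations of generators while the ``divides'' relation $\phi({}_RR)\mid\bar a$ is the one from Fact~\ref{flat}(ii). There is no real mathematical difficulty beyond the routine fact that pp formulas push forward along $R$-linear maps, which underlies the passage between $\bar a$ and $\bar b=\bar a T$ in both $N$ and ${}_RR$.
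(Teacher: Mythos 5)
Your proof is correct and follows essentially the same route as the paper: you write the given tuple as a combination $\bar b=\bar a T$ of generators, substitute to form the pp formula $\psi(\bar y)$ (equivalent to $\phi(\bar y T)$) satisfied by $\bar a$, apply the hypothesis to $\bar a$ and $\psi$, and push the resulting divisibility forward along $T$ using that pp formulas are preserved by such maps. The paper runs the identical substitution-and-push-forward argument, merely phrased in the other direction (starting from a single element $a=\bar s\,\bar a$ and a unary $\psi$, since unary formulas suffice in Zimmermann's criterion).
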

\begin{proof}
Let $a=\sum_k s_k a_k$ with the $a_k$ in $A$. Write the $s_k$ as a tuple $\bar{s}$ (a row vector) and the $a_k$ as a tuple $\bar{a}$ (a column vector of the same length) and $a$ as $a=\bar{s} \, \bar{a}$. Consider a pp formula $\psi$ that $a$ satisfies in $N$. We need to show $\psi(_RR)|a$ (it is enough for flatness for this to be true for 1-place pp formulas $\psi$; the hypothesis on the generators we do require  for tuples though!).

Consider the pp formula $\phi(\bar{x})$ given by $\psi(\bar{s} \, \bar{x})$. Since $\bar{a}$ satisfies this formula in $N$, by hypothesis, there are  tuples $\bar{r}_i\in \phi(_RR)$ and elements $c_i\in N$ such that $\bar{a}= \sum_i\bar{r}_i \,  c_i$. But then $a=\bar{s} \, \bar{a}=\bar{s}( \sum_i\bar{r}_i  \,  c_i)= \sum_i (\bar{s} \,  \bar{r}_i ) c_i$. It remains to notice that each $\bar{s}  \, \bar{r}_i $ is in $\psi(_RR)$.
\end{proof}

\begin{prop}\label{flatK-RG}
Every countable subset of a flat $\sharp$-Mittag-Leffler  module is contained in a countably generated\/ $\flat$-pure submodule that is  flat and $\sharp$-Mittag-Leffler. That is, every  flat $\sharp$-Mittag-Leffler  module is $\aleph_1$-\,$\flat$-`flat  $\sharp$-Mittag-Leffler' (and conversely).
\end{prop}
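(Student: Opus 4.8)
The plan is to combine the relativized Raynaud--Gruson argument of Proposition \ref{K-RG} (applied with $\cal K=\sharp$, $\cal F=\flat$, legitimate by Fact \ref{IvoH}) with the flatness criterion just proved in the preceding Lemma, so that the countably generated $\flat$-pure submodule produced by the former can be shown, at the same time, to be flat. So let $M$ be flat and $\sharp$-Mittag-Leffler---equivalently, by Fact \ref{MThm}(1), flat and $\flat$-atomic---and let $A_0$ be a countable subset of $M$. First I would run the closure process from the proof of Proposition \ref{K-RG}: build an ascending $\omega$-chain of countable subsets $A_0\subseteq A_1\subseteq\cdots$ of $M$ where, at each stage, for every finite tuple already present one adjoins witnesses realizing, in $M$, the $\flat$-generating pp formula of that tuple's pp type. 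Put $B=\bigcup_n A_n$ and let $N$ be the submodule of $M$ generated by $B$. As in that proof, $N$ is $\flat$-pure in $M$ and $\flat$-atomic, hence $\sharp$-Mittag-Leffler.

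The new point is that $N$ is flat, and this is exactly where the preceding Lemma enters: by that Lemma it suffices to check, for every tuple $\bar a$ of generators from $B$ and every pp formula $\phi(\bar x)$ that $\bar a$ satisfies in $N$, that $\phi({}_RR)\mid\bar a$ holds in $N$. Here I would use flatness of the ambient module $M$ together with the $\flat$-purity built into $N$. Namely, let $\phi_{\bar a}$ be the $\flat$-generating formula of the pp type of $\bar a$ in $M$ that was used in the construction; by $\flat$-purity we may take it so that $\bar a\in\phi_{\bar a}(N)$, and the witnesses exhibiting this were adjoined to $B$, hence lie in $N$. Now $\phi_{\bar a}\leq_\flat\phi$ for every $\phi$ in the pp type of $\bar a$ in $M$, in particular for every $\phi$ that $\bar a$ satisfies in $N$ (since such $\phi$ is then satisfied by $\bar a$ in $M$ as well, $N$ being a submodule). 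On the other hand, since $M$ is flat, Fact \ref{flat}(iii) gives $\phi_{\bar a}\leq_\flat\bigl(\sum_{s\in\phi_{\bar a}({}_RR)}s\mid x\bigr)$-type divisibility; more directly, one argues that $\phi_{\bar a}(M)=\phi_{\bar a}({}_RR)M$ and traces the witnesses of $\bar a\in\phi_{\bar a}(N)$ back to a divisibility expression with coefficients from $\phi_{\bar a}({}_RR)$ and elements that, by the construction of $B$, may be taken inside $N$. Feeding $\phi_{\bar a}\leq_\flat\phi$ through, the coefficient tuples land in $\phi({}_RR)$, giving $\phi({}_RR)\mid\bar a$ in $N$, as the Lemma requires.

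More carefully, to make the last step legitimate one must ensure that the divisibility witnesses for $\bar a\in\phi_{\bar a}(N)$ can themselves be chosen inside $N$ and not merely inside $M$; this is handled by the same bookkeeping that produces $\flat$-purity---after adjoining the witnesses to the $\flat$-generating formula one simply continues the closure process so that the divisibility data these witnesses require are present too, exactly the kind of ``adjoin solutions and iterate'' argument used in Proposition \ref{K-RG}. Once $\phi_{\bar a}({}_RR)\mid\bar a$ in $N$ is available, composing with $\phi_{\bar a}\leq_\flat\phi$ as above yields $\phi({}_RR)\mid\bar a$ in $N$ for every $\phi$ realized at $\bar a$, so the Lemma applies and $N$ is flat. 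The converse direction is immediate: if every countable subset of $M$ sits in a countably generated flat $\sharp$-Mittag-Leffler pure (a fortiori $\flat$-pure) submodule, then $M$ is flat (flatness is of finite character, or: a direct limit of flat modules is flat) and $\sharp$-Mittag-Leffler by Fact \ref{MThm}(2).

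\textbf{Main obstacle.} I expect the delicate point to be the interleaving of the two closure processes---adjoining pp-type witnesses (for $\flat$-purity and $\flat$-atomicity) \emph{and} adjoining the divisibility witnesses coming from flatness of $M$---so that the single countable submodule $N$ enjoys all three properties simultaneously; keeping $N$ countable while guaranteeing that the flatness criterion of the preceding Lemma can be verified \emph{internally to $N$} is the crux, everything else being a routine application of Proposition \ref{K-RG} and Fact \ref{flat}.
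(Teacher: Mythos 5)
Your proposal is correct and follows essentially the same route as the paper: run the closure process of Proposition \ref{K-RG} while making sure the flatness witnesses $\bar a=\sum_i \bar r_i c_i$ with $\bar r_i\in\phi_{\bar a}({}_RR)$ end up inside $N$, then push $\bar r_i$ into $\psi({}_RR)$ via $\phi_{\bar a}\leq_\flat\psi$. The only (cosmetic) difference is that the paper avoids your ``interleaving'' worry by replacing the $\flat$-generator $\phi$ outright with the $\flat$-equivalent divisibility formula $\exists\bar y\,(x=\sum_i r_iy_i)$, so that the witnesses adjoined for the generating formula \emph{are} the divisibility witnesses and a single closure process suffices.
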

\begin{proof}
 If $M$ is flat in the proof of Proposition \ref{K-RG}, we may work with certain particular  $\flat$-generators of types. Namely, let $\phi$ be a  $\flat$-generator of $p$. Let's work with 1-types first, so write $a$ instead of $\bar{a}$. Since $M$ is flat, $a$ satisfies a pp formula $\phi_a$ of the form $x=\exists\bar{y}\sum_i r_i y_i$ with $r_i\in\phi(_RR)$, i.e., $a=\sum_i r_i c_i$ for some $c_i\in M$. This formula $\phi_a$, of course, implies $\phi$ (everywhere), but being in  $p$, it is also $\flat$-implied by $\phi$. Consequently, it is  $\flat$-equivalent to $\phi$, and we may simply work with it instead of $\phi$. Then, in the terminology of the proof of the proposition, $N$ contains the witnesses $c_i$ and is $\flat$-atomic  (hence $\sharp$-Mittag-Leffler). To see that it is flat, consider (an arbitrary) $a\in N$ as before. Let $a\in \psi(N)$. We have to show $\psi(_RR)|a$ in $N$. As $\psi \in p$, the formula $\phi_a$ $\flat$-implies $\psi$. In particular, $\phi(_RR)=\phi_a(_RR)\subseteq\psi(_RR)$. Hence $r_i\in \psi(_RR)$, and thus the above representation of $a$ as $a=\sum_i r_i c_i$ shows that  $\psi(_RR)|a$ in $N$. 
 The obvious adjustments necessary for the treatment of tuples are left to the reader.
\end{proof}

\subsection{Flat $\sharp$-Mittag-Leffler modules are Mittag-Leffler}\label{sharp}

\begin{prop}\label{almostproj} Every countable subset of a  flat $\sharp$-Mittag-Leffler module is contained in a countably generated projective $\flat$-pure submodule. Hence,  a  flat module is $\sharp$-Mittag-Leffler if and only if it is $\aleph_1$-\,$\flat$-projective.
\end{prop}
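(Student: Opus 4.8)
The plan is to bootstrap Proposition~\ref{flatK-RG} into a statement about \emph{projective} $\flat$-pure submodules by using the structure theory of flat Mittag-Leffler modules. First I would note that the second sentence follows from the first exactly as in Proposition~\ref{flatK-RG}: the forward direction is the content of the first sentence, and the converse holds because $\aleph_1$-$\flat$-projectivity gives, in particular, that every countable subset lies in a countably generated flat (being projective) $\flat$-pure submodule, and a union argument together with Fact~\ref{MThm}(2) (or Proposition~\ref{K-RG}) recovers $\sharp$-Mittag-Lefflerness, while flatness is inherited from the ambient module. So the real work is the first sentence.

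For the first sentence I would start from Proposition~\ref{flatK-RG}: given a countable subset of a flat $\sharp$-Mittag-Leffler module $M$, it is contained in a countably generated $\flat$-pure submodule $N$ that is itself flat and $\sharp$-Mittag-Leffler. Now $N$ is a countably generated flat $\sharp$-Mittag-Leffler module; equivalently (Fact~\ref{MThm}(1)) a countably generated flat $\flat$-atomic module. But $\flat$-atomic is weaker than atomic only in that we restrict the partial order to flat modules --- and here the module $N$ under scrutiny \emph{is} flat. The key observation is that for a \emph{flat} module, being $\flat$-atomic already forces it to be Mittag-Leffler: indeed, if $\bar a$ is a tuple in the flat module $N$ with pp-type $\flat$-generated by $\phi$, then (as in the proof of Proposition~\ref{flatK-RG}, via Fact~\ref{flat}) $\phi$ is $\flat$-equivalent to a formula $\phi_{\bar a}$ of divisibility type $\exists\bar y\,(\bar x=\sum_i \bar r_i y_i)$ with $\bar r_i\in\phi({_R}R)$, and since $N$ is flat this $\phi_{\bar a}$ actually generates the pp-type of $\bar a$ in $N$ over \emph{all} modules, not just flat ones; hence $N$ is atomic, i.e.\ Mittag-Leffler. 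Then a countably generated flat Mittag-Leffler module is projective by Corollary~\ref{RGc.g.} (Raynaud--Gruson). So $N$ is the desired countably generated projective $\flat$-pure submodule.

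The step I expect to be the main obstacle is the claim that, in the flat module $N$, the $\flat$-generator $\phi_{\bar a}$ of divisibility type is in fact a full generator of the pp-type of $\bar a$ in $N$ --- one has to make sure that every pp formula $\psi$ satisfied by $\bar a$ in $N$ is implied by $\phi_{\bar a}$ \emph{in $N$}, not merely $\flat$-implied; this is precisely the argument already carried out at the end of the proof of Proposition~\ref{flatK-RG} showing $\psi({_R}R)\mid\bar a$ in $N$, now read as: $\phi_{\bar a}(N)\subseteq\psi(N)$ because $\bar a\in\phi_{\bar a}(N)$ already exhibits $\bar a$ in $\psi(N)$, and by homogeneity/additivity this propagates to all of $\phi_{\bar a}(N)$. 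Once this is in hand, atomicity of $N$ is immediate, and the rest is a citation of Corollary~\ref{RGc.g.}. The only other thing to watch is the passage from generators to arbitrary tuples, which is handled exactly by the lemma preceding Proposition~\ref{flatK-RG} together with the remark from \cite[Lemma 1.5]{habil} used in the proof of Proposition~\ref{K-RG}; I would simply invoke those and leave the bookkeeping to the reader.
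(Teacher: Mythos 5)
Your proof is correct, but it reaches the projectivity of the countably generated piece by a genuinely different route than the paper. The paper takes the countably generated flat $\sharp$-Mittag-Leffler module $C$ produced by Proposition \ref{flatK-RG}, chooses a free cover $g\colon B\to C$ (pure because $C$ is flat, with $B\in\flat$), and splits it by Proposition \ref{Fpureproj} applied with $\cal F=\flat$; you instead upgrade $\flat$-atomicity of $C$ to full atomicity and then quote Raynaud--Gruson (Corollary \ref{RGc.g.}), i.e.\ Proposition \ref{Fpureproj} with $\cal F=\RMod$. Your key claim checks out: if $\phi$ $\flat$-generates the pp type $p$ of $\bar a$ in the flat module $N$, then Fact \ref{flat}(ii) writes $\bar a=\sum_i\bar r_i c_i$ with $\bar r_i\in\phi({_R}R)$, and for any $\psi\in p$ one has $\bar r_i\in\phi({_R}R)\subseteq\psi({_R}R)$ (evaluate the $\flat$-implication at the flat module ${_R}R$), whence $\phi_{\bar a}(M)=\sum_i\bar r_i M\subseteq\psi({_R}R)M\subseteq\psi(M)$ for \emph{every} module $M$ --- this, rather than the somewhat vague ``homogeneity/additivity'' propagation you invoke, is the precise reason $\phi_{\bar a}$ is an absolute generator. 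Note what your lemma buys: it nowhere uses countable generation, so it proves outright that every flat $\flat$-atomic module is atomic, i.e.\ it gives Theorem \ref{alreadyML} directly and bypasses the $\aleph_1$-c-projective covering argument and Fact \ref{BT} entirely; you should state it in that generality. Two small citation points: for the converse in the second sentence, Fact \ref{MThm}(2) is not quite the right reference (it needs genuinely pure submodules, whereas you only have $\flat$-pure ones), so rely on Proposition \ref{K-RG} as you also suggest; and the paper's own proof glosses over that direction just as briefly as you do.
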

\begin{proof} 
Let $C$ be a flat $\sharp$-Mittag-Leffler and choose an epimorphism $g: B \to C$ from a free module $B$. By flatness, $g$ is pure. Since $\sharp$-Mittag-Leffler is the same as $\flat$-atomic,
if $C$ is countably generated, Proposition \ref{Fpureproj} (for $\cal F=\flat$) yields  a section of $g$, showing that $C$ is a direct summand of $B$, hence projective.

By Proposition \ref{flatK-RG}, every countable subset of a flat $\sharp$-Mittag-Leffler is contained in a countably generated  $\flat$-pure  submodule which is flat and $\sharp$-Mittag-Leffler and hence, as just shown,  projective.
\end{proof}
 
  \begin{cor}
  \begin{enumerate}[\upshape(1)]
 \item A countably generated  flat module is $\sharp$-Mittag-Leffler if and only if it is projective.
 \item A   flat  left module over a left hereditary ring is $\sharp$-Mittag-Leffler if and only if it is $\aleph_1$-projective.
\end{enumerate}
 \end{cor}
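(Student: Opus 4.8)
The plan is to read off both parts from Proposition \ref{almostproj}, using only the elementary observation that being Mittag-Leffler (i.e.\ $\ModR$-Mittag-Leffler) is the strongest of the relativized conditions, so in particular every Mittag-Leffler module is $\sharp$-Mittag-Leffler.

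For (1), the ``if'' direction: a free module is a direct sum of copies of $R$, each finitely presented and hence Mittag-Leffler, so by Fact \ref{MThm}(4) free modules are Mittag-Leffler, and so are their direct summands; thus every projective module is Mittag-Leffler, a fortiori $\sharp$-Mittag-Leffler. For the ``only if'' direction, let $M$ be countably generated, flat and $\sharp$-Mittag-Leffler and fix a countable generating set $A$ of $M$. By Proposition \ref{almostproj} (indeed already by the first paragraph of its proof), $A$ is contained in a countably generated projective submodule $N$ of $M$; since $A$ generates $M$ we get $N=M$, so $M$ is projective.

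For (2), the ``if'' direction is Corollary \ref{aleph1}: an $\aleph_1$-projective module is flat and Mittag-Leffler, hence $\sharp$-Mittag-Leffler. For the ``only if'' direction, let $R$ be left hereditary and $M$ a flat $\sharp$-Mittag-Leffler left module; I must show every countably generated submodule $M_0$ of $M$ is projective. Picking a countable generating set of $M_0$ and applying Proposition \ref{almostproj} inside $M$, I obtain a countably generated projective submodule $N$ of $M$ with $M_0\subseteq N$. Over a left hereditary ring every submodule of a projective module is projective, so $M_0$ is projective; hence $M$ is $\aleph_1$-projective.

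The only step needing care is this last use of left heredity: Proposition \ref{almostproj} places $M_0$ \emph{inside} a countably generated projective submodule of $M$, but does not by itself make $M_0$ projective; the passage from one to the other is precisely where the hypothesis ``submodules of projectives are projective'' enters. Everything else is bookkeeping.
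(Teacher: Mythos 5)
Your proposal is correct and is essentially the derivation the paper intends: part (1) is exactly the first paragraph of the proof of Proposition \ref{almostproj} (plus the standard fact that projectives are Mittag-Leffler, hence $\sharp$-Mittag-Leffler), and part (2) is Proposition \ref{almostproj} combined with heredity of projectivity over left hereditary rings for one direction and Corollary \ref{aleph1} for the other. Your closing remark correctly identifies the one place where left heredity is genuinely used.
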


\begin{thm}\label{alreadyML} 
 Every  flat $\sharp$-Mittag-Leffler module is  Mittag-Leffler.
\end{thm}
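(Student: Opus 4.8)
The plan is to bootstrap from the countably generated case to the general case using the characterization of Mittag-Leffler modules in Fact~\ref{RG}(2). Let $M$ be a flat $\sharp$-Mittag-Leffler module. We must show $M$ is Mittag-Leffler, and since $M$ is flat, by Fact~\ref{RG}(2) it suffices to show that $M$ is $\aleph_1$-$*$-projective, i.e.\ that every countable (equivalently, finite) subset of $M$ is contained in a countably generated \emph{pure} submodule that is projective.

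First I would invoke Proposition~\ref{almostproj}: every countable subset of $M$ is contained in a countably generated projective $\flat$-pure submodule $N$. The gap to be bridged is that $N$ is only guaranteed $\flat$-pure in $M$, not fully pure, whereas Fact~\ref{RG}(2) requires a pure submodule. The key observation is that $N$, being projective over a ring whose relevant principal-ideal behaviour is controlled, is already flat, so the pp-subgroup criterion (Fact~\ref{flat}(ii)) pins down $\phi(N)=\phi(\,_RR)N$ for \emph{all} pp formulas $\phi$; meanwhile $M$ is flat too, so $\phi(M)=\phi(\,_RR)M$. Combined with the $\flat$-purity (which already gives the inclusion $\phi(N)\supseteq\phi_{\bar a}(N)$ for the chosen $\flat$-generator $\phi_{\bar a}$ of the type of $\bar a$ in $M$), I would argue that for a tuple $\bar a$ from $N$ and any pp formula $\psi$ with $\bar a\in\psi(M)$, one has $\bar a\in\psi(\,_RR)M\cap N$; but the $\flat$-generator $\phi_{\bar a}$ of the type of $\bar a$ satisfies $\phi_{\bar a}\leq_\flat\psi$, hence $\phi_{\bar a}(\,_RR)\subseteq\psi(\,_RR)$, and since $\bar a\in\phi_{\bar a}(N)=\phi_{\bar a}(\,_RR)N\subseteq\psi(\,_RR)N=\psi(N)$ we conclude $\bar a\in\psi(N)$. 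Thus the pp type of $\bar a$ in $N$ equals that in $M$, i.e.\ $N$ is in fact a pure submodule of $M$.

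Having upgraded $\flat$-purity to full purity for these countably generated projective submodules, I would then conclude: every countable subset of $M$ sits inside a countably generated projective (hence pure-projective) pure submodule, so $M$ is $\aleph_1$-$*$-projective, and by Fact~\ref{RG}(2) $M$ is Mittag-Leffler. For the uncountable-ring subtlety, one may instead route through Fact~\ref{BT}: Proposition~\ref{almostproj} says $M$ is $\aleph_1$-$\flat$-projective, and the purity upgrade just described shows this covering is by genuinely pure submodules, giving an $\aleph_1$-$\mathrm{c}$-projective covering (the countably generated projective submodules form a continuous, cofinal system), whence $M$ is flat Mittag-Leffler by Fact~\ref{BT}.

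The main obstacle is exactly the purity upgrade: one must be careful that the $\flat$-generator of the pp type of $\bar a$ in $M$ interacts correctly with an \emph{arbitrary} pp formula $\psi$ in the type, and that flatness of $N$ (not merely of $M$) is genuinely available --- this is where Proposition~\ref{almostproj}'s conclusion that $N$ is \emph{projective}, not just flat $\sharp$-Mittag-Leffler, does the work, since projective modules are flat. Once flatness of $N$ is in hand, Fact~\ref{flat}(ii) for both $M$ and $N$ makes the pp-subgroup computation routine, and the rest is assembling the covering and citing Fact~\ref{RG}(2) (or Fact~\ref{BT}).
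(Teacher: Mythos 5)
Your argument is correct, and the decisive new step --- upgrading the $\flat$-purity of the countably generated projective submodules supplied by Proposition~\ref{almostproj} to genuine purity --- is sound: if $\phi_{\bar a}$ is a $\flat$-generator of the pp type of $\bar a$ in $M$ with $\bar a\in\phi_{\bar a}(N)$, and $\psi$ is any formula in that type, then $\phi_{\bar a}\leq_\flat\psi$ gives $\phi_{\bar a}({}_RR)\subseteq\psi({}_RR)$ because ${}_RR$ is flat, and flatness of $N$ (projective implies flat, and Fact~\ref{flat}(ii) holds for tuples) gives $\bar a\in\phi_{\bar a}(N)=\phi_{\bar a}({}_RR)N\subseteq\psi({}_RR)N\subseteq\psi(N)$; note only flatness of $N$, not of $M$, is used here. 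This is, however, a genuinely different assembly from the paper's. The paper never claims full purity of the covering submodules: it takes the class of countably generated projective $\flat$-pure submodules from Proposition~\ref{flatK-RG}, verifies the density and continuity conditions (d) and (c) of Section~\ref{cd}, and invokes the Braun--Trlifaj criterion Fact~\ref{BT} to get $\aleph_1$-c-projectivity --- the continuity condition is exactly what substitutes for the missing purity, and the paper remarks separately that over countable rings one has honest pure submodules and can fall back on Fact~\ref{MThm}(2). Your purity upgrade bypasses Fact~\ref{BT} altogether and lets you quote the classical Raynaud--Gruson criterion Fact~\ref{RG}(2) uniformly in the cardinality of the ring; the computation behind the upgrade is in fact already implicit in the paper's proof of Proposition~\ref{flatK-RG}, where the same chain of inclusions is run for $\psi$ in the type of $\bar a$ in $N$ rather than in $M$ (to establish flatness of $N$), so what you gain is a streamlining: one criterion instead of two and no case distinction on $|R|$. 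What the paper's route retains is independence from the purity question, which is the relevant concern in the general $\cal F$-pure setting where no such upgrade is available.
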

\begin{proof} Recall,  $\sharp$-Mittag-Leffler  is the same as $\flat$-atomic.

By Fact \ref{BT} we only need to find a covering class $\cal C$ of countably generated projective modules satisfying conditions (c) and (d) from Section \ref{cd} (in order to derive  $\aleph_1$-c-projectivity). If we take for $\cal C$ the class of  countably generated flat $\sharp$-Mittag-Leffler  $\flat$-pure submodules as constructed in the proof of Proposition \ref{flatK-RG}, we have (d) by the  proposition, and  every member of $\cal C$ is projective by the previous corollary.

In order to verify (c), consider a chain $C_0\subseteq C_1\subseteq C_2\subseteq \ldots$  in $\cal C$ (order type $\omega$ suffices) with union  $C$. We have to show $C$ is in $\cal C$. For that, in turn, it suffices to verify that  it contains all witnesses to the particular  $\flat$-generators of types of tuples from $C$  as employed in the proof of Proposition \ref{flatK-RG}. However, this is clear, since every such tuple is contained already in some $C_n$, which itself enjoys that property.
\end{proof}

Note that for countable rings the proof simplifies considerably, as then every countable set is contained in a countably generated \emph{pure} submodule that inherits all the properties in question. In the proof of Theorem \ref{alreadyML} we then have enough pure submodules that are projective, thus guaranteeing that the module in question is Mittag-Leffler  (by Fact \ref{MThm}(2)).

\subsection{Semi-hereditary rings}\label{s-hrings}
Now that we know flat $\sharp$-Mittag-Leffler modules  are  Mittag-Leffler (in the usual sense, i.e., with $\cal K=\RMod$), we can make use of some old results from  \cite[Ch.6]{habil} on flat Mittag-Leffler modules, which we record  here for reference---indicating the proofs if never  published elsewhere. 

We are going to derive some more equivalences to being flat and Mittag-Leffler. 

\begin{cond}\mbox{}\label{cond}
Consider the following conditions on a module $F$.

\begin{enumerate}  [\upshape  (A)]
\item    $F$ is flat and $\sharp$-Mittag-Leffler.
 \item  $F$ is flat and Mittag-Leffler.
 \item  $F$ is $\aleph_1$-$*$-projective.
 \item  $F$ is $\aleph_0$-$*$-projective.
 \item $F$ is $\aleph_1$-projective.
  \item $F$ is $\aleph_0$-projective.
\end{enumerate} 

Then {\rm (A) -- (C)} are equivalent and implied by any of {\rm (D) or (E)};  trivially, {\rm (E)} implies {\rm (F)}.
\end{cond}
\begin{proof}
 
We know already about the first three.
By Fact \ref{MThm}(2), given an infinite $\kappa$, every $\kappa$-$*$-`$\cal K$-Mittag-Leffler' module is $\cal K$-Mittag-Leffler. In particular, $\aleph_0$-$*$-pure-projective modules are Mittag-Leffler, and  $\aleph_0$-$*$-projectives are flat Mittag-Leffler.
For (E) see Corollary \ref{aleph1}.
\end{proof}

Next we characterize the rings over which flat Mittag-Leffler is the same as $\aleph_1$-projective. 
 They turn out to be the 
 \emph{left countably hereditary} rings, i.e., rings whose countably generated left ideals are projective. (In \cite[\S 2.1]{C'} these are called \emph{left $\aleph_0$-hereditary rings}.)

\cite[Ex.1.3.12]{C} (or \cite[Ex.2.3.9]{C'}) asks to show that a semi-fir is a left fir if and only if it is  left hereditary. The  definition just given allows one to add an intermediate layer (which gives away our solution to the exercise).
 
\begin{rem}\label{aleph0fir}
 A semi-fir is a left $\aleph_0$-fir if and only if it is left countably hereditary.
\end{rem}
\begin{proof}
 By definition,  a left $\aleph_0$-fir is a ring all of whose countably generated left ideals are free. But over a semi-fir, projective=free  by the last statement of Fact \ref{Alb}. 
 \end{proof}

If the ring itself is  countable 
 (or has only countably generated left ideals),  countable heredity is the same as  heredity. For that case
 the next characterization  was contained in \cite[Cor.6.6]{habil}. It implies that all von Neumann regular rings are countably hereditary, see Remark \ref{c-hered-reg} below. Property (ii)  was proved for regular rings in  \cite[Cor.6.8]{habil}.
 
\begin{prop}\label{c-hered} The following are equivalent over any ring.
\begin{enumerate}  [\upshape (i)]
 \item  $R$ is left countably hereditary, i.e.,  $_RR$ is $\aleph_1$-projective.
 \item Every projective left  $R$-module is  $\aleph_1$-projective.
 \item A   left  $R$-module is $\aleph_1$-$*$-projective if and only if it is $\aleph_1$-projective.
 \item A   left  $R$-module is flat and Mittag-Leffler  if and only if it is $\aleph_1$-projective.
 \item The property  of being  flat and Mittag-Leffler  is hereditary in $\RMod$  (i.e., inherited by submodules).
\end{enumerate} 
\end{prop}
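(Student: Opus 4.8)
The plan is to establish the cycle $(i)\Rightarrow(ii)\Rightarrow(iv)\Rightarrow(v)\Rightarrow(i)$, together with $(iii)\Leftrightarrow(iv)$, the latter being immediate from Conditions \ref{cond}: there ``flat and Mittag-Leffler'' and ``$\aleph_1$-$*$-projective'' are shown to coincide as properties of a single module, over any ring, so the biconditionals $(iii)$ and $(iv)$ say the same thing. It is worth keeping in mind that in each of these biconditionals only one half carries content, the other being universally true: $\aleph_1$-projective $\Rightarrow$ flat and Mittag-Leffler by Corollary \ref{aleph1}; flat and Mittag-Leffler $\Rightarrow$ $\aleph_1$-$*$-projective by Fact \ref{RG}(2); and projective $\Rightarrow$ flat and Mittag-Leffler always (projective modules being $\aleph_1$-$*$-projective, hence Mittag-Leffler by Fact \ref{RG}(1)).

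The soft implications go as follows. For $(ii)\Rightarrow(iv)$: assuming $(ii)$, let $M$ be flat and Mittag-Leffler and $N\leq M$ countably generated, say $N=RX$ with $X$ countable. By Fact \ref{RG}(2), $X$ lies in a countably generated projective (pure) submodule $P\leq M$; then $N=RX\subseteq P$, and by $(ii)$ the projective module $P$ is $\aleph_1$-projective, so its countably generated submodule $N$ is projective. Hence $M$ is $\aleph_1$-projective, and with Corollary \ref{aleph1} this yields $(iv)$. For $(iv)\Rightarrow(v)$: assuming $(iv)$, a flat Mittag-Leffler module $M$ is $\aleph_1$-projective; if $N\leq M$, then every countably generated submodule of $N$ is a countably generated submodule of $M$, hence projective, so $N$ is again $\aleph_1$-projective, hence flat and Mittag-Leffler by Corollary \ref{aleph1} --- i.e. $(v)$ holds. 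For $(v)\Rightarrow(i)$: the free module $_RR$ is flat and Mittag-Leffler, so by $(v)$ so is every left ideal; a countably generated flat Mittag-Leffler module is projective by Corollary \ref{RGc.g.}, so every countably generated left ideal is projective, which is $(i)$.

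It remains to prove $(i)\Rightarrow(ii)$, the crux. A projective module is a direct summand of a free module $R^{(I)}$, and a countably generated submodule of it is --- its generators having finite support --- contained in $R^{(J)}$ for some countable $J$; so it suffices to show, over a ring with all countably generated left ideals projective, that every countably generated submodule $N$ of $R^{(\omega)}=\bigoplus_{n\geq 1}Re_n$ is projective. If $N$ is contained in finitely many coordinates, $N\leq R^{c}$, I would induct on $c$: writing $\pi\colon R^{c}\to R$ for the last-coordinate projection and $N'=N\cap R^{c-1}=\ker(\pi|_N)$, the quotient $N/N'\cong\pi(N)$ is a countably generated left ideal (being a quotient of the countably generated $N$), hence projective by $(i)$; so $N\cong N'\oplus\pi(N)$, whence $N'$ is a direct summand of $N$ and therefore countably generated, hence projective by the inductive hypothesis, and so is $N$. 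For $N$ whose support meets all of $\omega$, one exhausts $N$ by the continuous chain $N_n=N\cap(Re_1\oplus\dots\oplus Re_n)$, each $N_{n+1}/N_n$ embedding as a left ideal of $R$; provided these quotients are countably generated they are projective by $(i)$, and then $N\cong\bigoplus_n N_{n+1}/N_n$ is projective. Making this last point rigorous is exactly the delicate step, and I would handle it by invoking Cohn's treatment of left $\aleph_0$-hereditary rings, \cite[\S2.1]{C'}, which is precisely the general case of the statement.

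The main obstacle, then, is the infinite-rank case of this lemma on submodules of $R^{(\omega)}$: the naive coordinate filtration does not by itself guarantee that $N\cap(Re_1\oplus\dots\oplus Re_n)$ is countably generated, so one needs either Cohn's more careful argument (a Pontryagin-type criterion built from the filtration by finitely generated submodules) or an explicit citation to it. Everything else is bookkeeping with the facts already assembled above.
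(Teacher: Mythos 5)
Your treatment of the soft implications coincides with the paper's: (iii)$\Leftrightarrow$(iv) via Conditions \ref{cond}, (ii)$\Rightarrow$(iii)/(iv) by placing a countably generated submodule inside a countably generated projective pure submodule furnished by Fact \ref{RG}(2), (iv)$\Rightarrow$(v) because $\aleph_1$-projectivity obviously passes to submodules, and (v)$\Rightarrow$(i) via Corollary \ref{RGc.g.}; all of this is correct and is exactly how the paper proceeds.

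The divergence is at (i)$\Rightarrow$(ii), and you have correctly located the only real difficulty of the whole proposition there. The paper argues via Albrecht's theorem (quoted before Fact \ref{Alb}): a projective decomposes as $\bigoplus_i P_i$ with each $P_i$ a finitely generated left ideal, and then a Baer--Kaplansky-style shuffle is sketched on a countable family of elements. Your route --- reduce by finite support to $N\le R^{(\omega)}$ and filter by $N_n=N\cap(Re_1\oplus\dots\oplus Re_n)$ --- is the more standard one, and your finite-rank induction is complete and correct. But you should not outsource the infinite-rank step to \cite[\S 2.1]{C'}: the paper cites that source only for the terminology ``$\aleph_0$-hereditary,'' and the point you flag as delicate can be closed in a few lines using only semi-heredity, which (i) already gives you. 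Write $N=\bigcup_k N_k$ with $N_k$ finitely generated, say $N_k\le R^{m_k}$. The image of $N_k$ in $R^{m_k}/R^{n}\cong R^{m_k-n}$ is a finitely generated submodule of a free module, hence finitely generated projective over a semi-hereditary ring, so the surjection $N_k\to N_k/(N_k\cap R^{n})$ splits and $N_k\cap R^{n}$ is a direct summand of $N_k$, in particular finitely generated. Therefore $N\cap R^{n}=\bigcup_k(N_k\cap R^{n})$ is countably generated; its image under the top-coordinate projection is then a countably generated left ideal, projective by (i), so each inclusion $N_n\le N_{n+1}$ splits and $N=\bigoplus_n N_{n+1}/N_n$ is projective. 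With that paragraph inserted, your proof is complete and self-contained, and arguably more explicit at the crux than the paper's own sketch.
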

\begin{proof}
Assume (i). Since $R$ is left semi-hereditary if and only if  $_RR$ is $\aleph_0$-projective, by  Albrecht's result quoted before Fact \ref{Alb}, every projective left $R$-module decomposes into a direct sum of finitely generated ideals. By (i), these are projective---even $\aleph_0$-projective. An old argument (going back at least  to Baer 1937) shows that any such direct sum $P=\bigoplus_i P_i$ is  $\aleph_1$-projective. Namely, if $c_0, c_1, \ldots$ are countably many elements in $P$, choose a direct summand $P_{i_0}$ of $P$ containing $c_0$, write $P=P_{i_0}\oplus Q_1$ and $c_1=c_1' + q_1$ accordingly and choose a direct summand $P_{i_1}$ of $Q_1$ containing $q_1$ and so on. Eventually, all $c_i$ will be contained in the direct summand $P'=\bigoplus_{k<\omega} P_{i_k}$ of $P$, all of whose summands $P_{i_k}$ are finitely generated  and  $\aleph_0$-projective. Then the submodules generated by any of the elements $c_0$, $c_i'$, or $q_i$ are also projective, hence so is their direct sum, which is the submodule generated by the original $c_i$. 
This proves (ii) (and the converse is trivial). 

The direction from left to right in (iii) is  easily derived from (ii). The other direction is always true by Corollary \ref{aleph1}.

 (iii) $\Leftrightarrow$ (iv) is trivial,  while  (iv) $\Rightarrow$ (v) follows from the fact that being $\aleph_1$-projective is obviously hereditary.

Assuming (v), notice that every countably generated left ideal must be  flat and Mittag-Leffler, hence (flat and pure-) projective. So (i) follows.
 \end{proof}

The next result is known for  hereditary rings  \cite[Thm.7.58]{L}, but its proof works equally well for  countably hereditary rings.

\begin{cor}\label{noeth}
 A left countably hereditary ring of finite uniform (=Goldie) dimension is left noetherian. In particular, a left  countably hereditary left Ore domain is left noetherian. Thus, a left countably hereditary left B\'ezout domain is a left principal ideal domain.
\end{cor}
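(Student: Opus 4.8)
The plan is to prove the equivalent statement that every left ideal of $R$ is finitely generated, arguing by contradiction. First I would observe that if some left ideal of $R$ fails to be finitely generated, then inside it one can build a strictly ascending chain $\mathfrak{a}_1 \subsetneq \mathfrak{a}_2 \subsetneq \cdots$ of finitely generated left ideals (at stage $n$ simply choose a new generator lying outside $\mathfrak{a}_n$), whose union $\mathfrak{a} := \bigcup_n \mathfrak{a}_n$ is a countably generated left ideal. The hypothesis that $R$ is left countably hereditary then says precisely that $\mathfrak{a}$ is projective.

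Next I would feed this $\mathfrak{a}$ into the Albrecht/Bass structure theorem. A left countably hereditary ring is in particular left semi-hereditary (its finitely generated left ideals, being countably generated, are projective), so Fact~\ref{Alb} decomposes the projective module $\mathfrak{a}$ as a direct sum $\bigoplus_{\alpha\in A} P_\alpha$ of nonzero finitely generated projective left modules. This is where finite uniform (Goldie) dimension enters: $\mathfrak{a}$ is a submodule of ${}_R R$, hence has uniform dimension at most that of ${}_R R$, while each nonzero $P_\alpha$ raises the uniform dimension of the sum by at least one; so $A$ must be finite, and therefore $\mathfrak{a}$ is finitely generated. Since a finitely generated submodule of the union of an ascending chain already lies in one term of the chain, this forces $\mathfrak{a} = \mathfrak{a}_n$ for some $n$, contradicting $\mathfrak{a}_n \subsetneq \mathfrak{a}_{n+1} \subseteq \mathfrak{a}$. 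That contradiction yields that $R$ is left noetherian.

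The two corollaries I would then read off directly. A left Ore domain is left uniform --- any two nonzero left ideals meet nontrivially, by the Ore condition --- so it has left uniform dimension $1$, and the first part applies to give that a left countably hereditary left Ore domain is left noetherian. A left B\'ezout domain satisfies the left Ore condition (it is a $2$-fir with that condition, as recalled in Section~\ref{ring}), so it too falls under the theorem and is left noetherian; combining noetherianity with the B\'ezout property, every left ideal is finitely generated, hence principal, so the ring is a left principal ideal domain.

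The step I expect to require the most care is not any single computation but getting the logical level right: the classical hereditary case, \cite[Thm.7.58]{L}, runs along the same lines, and the only place countable heredity is genuinely needed --- as opposed to plain semi-heredity --- is in knowing that the \emph{union} $\bigcup_n \mathfrak{a}_n$, which a priori need not be finitely generated, is projective; everything after that is just the Albrecht/Bass decomposition together with the uniform-dimension bound. So the ``obstacle'', such as it is, lies in recognising that this is exactly the hypothesis one must, and may, assume.
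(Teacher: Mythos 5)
Your proof is correct and follows essentially the same route as the paper, which simply points to the proof of \cite[Thm.7.58]{L} (observing that only \emph{countably} generated left ideals need be known projective) and cites \cite[Prop.7.60]{L} for the step that a projective left ideal in a ring of finite left uniform dimension is finitely generated---the step you instead establish directly via the Albrecht/Bass decomposition together with the uniform-dimension count. Your derivation of the Ore and B\'ezout consequences also matches the paper's.
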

\begin{proof}
 In the proof of   \cite[Thm.7.58, p.267]{L}, it suffices to show that every \emph{countably} generated  left ideal $I$ is projective and then, by \cite[Prop.7.60]{L}, finitely generated (because of finite uniform dimension). 
 
 For the last statement  note that a left  B\'ezout domain is left Ore  \cite[Prop.II.1.8]{S} and, if also left noetherian, a left PID. 
\end{proof}

\begin{fact}\label{*} 
\begin{enumerate}[\upshape(1)]
  \item \mbox{}\cite[Prop.6.2+Cor.6.3]{habil}. The above conditions {\rm (A) -- (D)} are equivalent  for every left module  over a left or right semi-hereditary ring (and imply  {\rm (F)} over a left semi-hereditary ring).
\item  \mbox{}\cite[Cor.6.5  (for countable rings)]{habil}. The above conditions {\rm (A) -- (E)} are equivalent (and imply  {\rm (F)}) for every left module  over a left countably hereditary ring.
\end{enumerate}
\end{fact}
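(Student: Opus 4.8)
The plan is to bolt two small implications onto the web of equivalences already available. Recall from Conditions \ref{cond} that (A) $\Leftrightarrow$ (B) $\Leftrightarrow$ (C) over any ring --- (A) $\Rightarrow$ (B) being Theorem \ref{alreadyML} and (B) $\Leftrightarrow$ (C) being Fact \ref{RG}(2) --- and that, over any ring, each of (D), (E) implies (A)--(C), by Fact \ref{MThm}(2) and Corollary \ref{aleph1} respectively. So for part (1) it suffices to prove, over a left or right semi-hereditary ring, the implication (C) $\Rightarrow$ (D), plus (D) $\Rightarrow$ (F) over a left semi-hereditary ring; and for part (2) it suffices to splice (E) into the circle over a left countably hereditary ring.

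For (C) $\Rightarrow$ (D): assume $F$ is $\aleph_1$-$*$-projective and let $U$ be a finite subset of $F$. By hypothesis $U$ is contained in a countably generated projective \emph{pure} submodule $N$ of $F$. Since the ring is semi-hereditary on one side, Fact \ref{Alb} --- Albrecht/Bass, applied to $R$, or in the right semi-hereditary case to $R^{op}$, the pertinent conclusion being about \emph{left} projectives, which $F$ and $N$ are --- supplies a decomposition $N = \bigoplus_i N_i$ with every $N_i$ finitely generated projective. The finite set $U$ then already lies inside a finite partial sum $N_0 = \bigoplus_{i \in I_0} N_i$; this $N_0$ is finitely generated projective and a direct summand, hence pure, in $N$, and, purity being transitive, pure in $F$. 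Thus $F$ is $\aleph_0$-$*$-projective. For (D) $\Rightarrow$ (F) over a left semi-hereditary ring: given a finitely generated submodule $U$ of $F$, condition (D) places it inside a finitely generated projective submodule $N'$; since over a left semi-hereditary ring every finitely generated submodule of a projective module is projective (the classical extension of the defining property from finitely generated ideals), $U$ is projective, i.e., $F$ is $\aleph_0$-projective. With the equivalences quoted above this proves (1).

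For (2), first note that a left countably hereditary ring is left semi-hereditary (its finitely generated left ideals are countably generated, hence projective), so part (1) already delivers the equivalence of (A)--(D) and the implication to (F). It remains to see (B) $\Leftrightarrow$ (E) over such a ring: (E) $\Rightarrow$ (B) holds over any ring by Corollary \ref{aleph1}, while (B) $\Rightarrow$ (E) --- flat Mittag-Leffler modules are $\aleph_1$-projective over a left countably hereditary ring --- is exactly the equivalence (i) $\Leftrightarrow$ (iv) of Proposition \ref{c-hered}. Hence (A)--(E) are all equivalent over a left countably hereditary ring, and (E) $\Rightarrow$ (F) is immediate (or use (D) $\Rightarrow$ (F)).

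I expect the one genuinely substantive step to be the refinement, in (C) $\Rightarrow$ (D), of a countably generated projective pure submodule to a finitely generated projective pure submodule: this is exactly where the Albrecht/Bass structure theorem (Fact \ref{Alb}) earns its keep, and the two points to watch are that it must be applied to \emph{left} modules over a ring that may be semi-hereditary on only one side, and that purity really does propagate from the chosen finite summand through $N$ up to $F$. The rest is reassembly of implications already in the excerpt.
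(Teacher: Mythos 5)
Your proposal is correct and follows essentially the same route as the paper: (C) $\Rightarrow$ (D) via the Albrecht/Bass decomposition (Fact \ref{Alb}) of the countably generated projective pure submodule into finitely generated projectives, with a finite partial sum being a direct summand and hence pure, and part (2) via Proposition \ref{c-hered} together with Corollary \ref{aleph1}. Your explicit remarks that left countably hereditary implies left semi-hereditary and that (D) $\Rightarrow$ (F) uses the standard fact that finitely generated submodules of projectives over left semi-hereditary rings are projective merely spell out what the paper leaves implicit.
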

 \begin{proof} (1) (D) $\Rightarrow$ (A)  $\Leftrightarrow$ (B)  $\Leftrightarrow$ (C)  have been mentioned already. For (C) $\Rightarrow$ (D), note that every finite subset is contained in a (countably generated) projective pure submodule, which, being over a left or right semi-hereditary ring, decomposes into a direct sum of finitely generated projectives, Fact \ref{Alb}. So, every finite subset of $M$ is contained in a finitely generated projective submodule which is a direct summand of a pure submodule, so itself pure in $M$. Clearly, (F) is implied by (D) over a left semi-hereditary ring.
 
 (2) Obviously, (E) is implied by (C) (and (iii) of the proposition). Invoke Corollary \ref{aleph1} for the converse.
\end{proof}

Part (3) below, at least  for countable PIDs,  was contained in  \cite[after Cor.6.5]{habil}.

\begin{cor}\label{Bezout}\mbox{}
Denote by {\rm (A$^\prime$) -- (F$^\prime$)} the conditions from Conditions \ref{cond} with `projective' replaced by `free' (wherever possible).

\begin{enumerate}[\upshape(1)]
  \item   Conditions {\rm (A$^\prime$) -- (D$^\prime$)} are equivalent and imply  {\rm (F$^\prime$)} for every left module  over a semi-fir.
\item   Conditions {\rm (A$^\prime$) -- (E$^\prime$)} are equivalent (and imply  {\rm (F$^\prime$)}) for every left module  over a left $\aleph_0$-fir.
\item A left module over a left countably hereditary left or right B\'ezout domain (in particular, over a left or right principal ideal domain) is flat Mittag-Leffler if and only if it is $\aleph_1$-free.
\end{enumerate}

\end{cor}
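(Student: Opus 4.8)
The plan is to derive all three parts as near-immediate consequences of the results already assembled, chaining Fact~\ref{*}, Remark~\ref{aleph0fir}, Corollary~\ref{Bezout} itself (parts (1) and (2)), and the noetherian collapse from Corollary~\ref{noeth}. The primed conditions (A$'$)--(F$'$) differ from (A)--(F) only in that ``projective'' becomes ``free'' in (C$'$), (D$'$), (E$'$), (F$'$), while (A$'$) and (B$'$) are unchanged (flatness and Mittag-Lefflerness make no reference to projectivity). So the content to be proved is exactly that, over the rings in question, the relevant projective submodules/ideals can be taken to be \emph{free}, and that is precisely what ``fir'' statements deliver.

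For part~(1): over a semi-fir, all projective modules are free by the last sentence of Fact~\ref{Alb}. Hence in the equivalence (A)--(D) of Fact~\ref{*}(1) (a semi-fir is a two-sided semi-hereditary domain, so that fact applies), every occurrence of ``projective'' may be upgraded to ``free'': the finitely generated projective pure submodule produced in the proof of (C)$\Rightarrow$(D) is free, the countably generated projective pure submodule in (C) is free, and so on. This gives (A$'$) $\Leftrightarrow$ (B$'$) $\Leftrightarrow$ (C$'$) $\Leftrightarrow$ (D$'$), and (D$'$) $\Rightarrow$ (F$'$) since a direct sum of finitely generated free modules is free. For part~(2): a left $\aleph_0$-fir is, in particular, a semi-fir (it is an $n$-fir for every finite $n$), so part~(1) applies and gives (A$'$)--(D$'$); and by Remark~\ref{aleph0fir} a left $\aleph_0$-fir is left countably hereditary, so Fact~\ref{*}(2) adds condition (E) to the list of equivalents. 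Again, over a semi-fir (E), i.e.\ $\aleph_1$-projectivity, becomes (E$'$), $\aleph_1$-freeness, since the finitely generated projective direct summands appearing in the Baer-style argument (reproduced in the proof of Proposition~\ref{c-hered}) are free.

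For part~(3): a left or right B\'ezout domain is a semi-fir (stated in Section~\ref{ring}), so by part~(1) the conditions (A$'$)--(D$'$) are already equivalent over such a ring. Adding the hypothesis ``left countably hereditary'' and invoking Corollary~\ref{noeth}, the ring is in fact a left principal ideal domain; a left PID is certainly a left $\aleph_0$-fir (every countably generated---indeed every---left ideal is free), so part~(2) applies and (E$'$), i.e.\ $\aleph_1$-free, joins the list. Thus over a left countably hereditary left or right B\'ezout domain, ``flat Mittag-Leffler'' $=$ (B$'$) $=$ (E$'$) $=$ ``$\aleph_1$-free,'' which is the assertion. The parenthetical ``in particular, over a left or right PID'' is immediate since a PID is trivially left countably hereditary.

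The only genuine wrinkle---and the step I would be most careful with---is checking that the ``projective $\Rightarrow$ free'' substitution is legitimate at \emph{every} spot in the imported proofs, not merely in the statements. In Fact~\ref{*}(1) this is clean because Albrecht/Bass (Fact~\ref{Alb}) already decomposes the relevant projectives into finitely generated ones before freeness is even needed, and over a semi-fir all projectives are free outright; in Fact~\ref{*}(2) / Proposition~\ref{c-hered}(ii) one must note that the finitely generated left ideals $P_i$ occurring in the direct-sum decomposition of a projective are free over a semi-fir (being finitely generated projective), so the resulting $\aleph_1$-projective module is genuinely $\aleph_1$-\emph{free}. No obstacle of substance remains; the proof is a bookkeeping exercise, which is why I would keep it to the few lines sketched above rather than re-running the arguments of Section~\ref{ML}.
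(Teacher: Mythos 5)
Your argument is essentially the paper's: parts (1) and (2) are obtained from Fact~\ref{*} by the substitution ``projective $=$ free'' over semi-firs (Fact~\ref{Alb}), with Remark~\ref{aleph0fir} supplying the bridge between ``left countably hereditary'' and ``left $\aleph_0$-fir,'' and your care about where the substitution is applied inside the imported proofs is exactly the right (and only) point to check. The one place you deviate is part (3): you route through Corollary~\ref{noeth} to conclude the ring is a left PID before invoking part (2). That corollary, as stated, needs the \emph{left} Ore condition (via left B\'ezout or finite left uniform dimension), so it does not cover the ``left countably hereditary \emph{right} B\'ezout'' branch of the hypothesis. The detour is unnecessary: a one-sided B\'ezout domain is a semi-fir, and a left countably hereditary semi-fir is already a left $\aleph_0$-fir by Remark~\ref{aleph0fir}, so (3) is directly a special case of (2) --- which is how the paper argues. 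With that one justification repaired, your proof is correct and coincides with the paper's.
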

\begin{proof}
By Fact \ref{Alb}, projective=free over semi-firs, so (1) and (2) follow from   Fact \ref{*} (use Remark \ref{aleph0fir} for (2)). (3) is a special case of  (2), since one-sided B\'ezout domains are semi-firs, and  
 left PIDs are left hereditary and  B\'ezout.
\end{proof}
 
 (Remember, by Corollary \ref{noeth}, a left countably hereditary left B\'ezout domain is already a left PID.)
 
 The special case for the ring $\mathbb{Z}$, i.e., for abelian groups, was  derived from Pontryagin's Criterion in \cite[Prop.7]{AF}.
 
\begin{cor}[Azumaya--Facchini]\mbox{}
 
 An abelian group is torsion-free and Mittag-Leffler if and only if it is $\aleph_1$-free.
\end{cor}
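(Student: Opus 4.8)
The plan is to deduce this from Corollary \ref{Bezout}(3) applied to the ring $R=\mathbb{Z}$. First I would recall that $\mathbb{Z}$ is a principal ideal domain, hence in particular a left (equivalently, two-sided) B\'ezout domain, and it is hereditary, so a fortiori left countably hereditary. Thus $\mathbb{Z}$ satisfies the hypothesis of Corollary \ref{Bezout}(3), and that corollary tells us that an abelian group is flat Mittag-Leffler if and only if it is $\aleph_1$-free.

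The only remaining point is to replace ``flat'' by ``torsion-free.'' Over $\mathbb{Z}$ these two notions coincide: a $\mathbb{Z}$-module is flat if and only if it is torsion-free, which is the classical fact recalled in the introduction (``Similarly, `torsion-free' is the same as `flat'\,''), and which is also an instance of Hattori's characterization $\Tor(\mathbb{Z}/r\mathbb{Z},M)=0$ for all $r$, since $\mathbb{Z}$ has no zero-divisors so the Hattori notion of torsion-free agrees with the usual one. Substituting ``torsion-free'' for ``flat'' in the statement of Corollary \ref{Bezout}(3) for $R=\mathbb{Z}$ then yields exactly the claim: an abelian group is torsion-free and Mittag-Leffler if and only if it is $\aleph_1$-free.

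There is essentially no obstacle here; the statement is a direct specialization. If one wanted to avoid invoking the full strength of Corollary \ref{Bezout}, one could instead argue: a torsion-free (= flat) Mittag-Leffler abelian group is, by Theorem \ref{alreadyML} together with Fact \ref{BT} (in the hereditary case), $\aleph_1$-projective, and over $\mathbb{Z}$ projective finitely generated modules are free, so $\aleph_1$-projective means $\aleph_1$-free; conversely, $\aleph_1$-free groups are flat and Mittag-Leffler by Corollary \ref{aleph1}. But since Corollary \ref{Bezout}(3) already packages all of this, the cleanest route is simply to cite it for $\mathbb{Z}$, noting that the original source of this special case is Pontryagin's Criterion via \cite[Prop.7]{AF}.
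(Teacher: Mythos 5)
Your proof is correct and follows exactly the paper's route: the corollary is stated as the immediate specialization of Corollary \ref{Bezout}(3) to $R=\mathbb{Z}$ (a PID, hence a countably hereditary B\'ezout domain), using that flat and torsion-free coincide for abelian groups. The paper adds only the historical remark that this special case was originally obtained from Pontryagin's Criterion in \cite[Prop.7]{AF}, which you also note.
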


\begin{rem} It is not hard to see directly from Fact \ref{MThm} and the definition of $\flat$-atomicity that 
any such group is, in Baer's terminology, a homogeneous  torsion-free group of null type, i.e., all characteristics are equivalent to the characteristic $(0,0,0, \ldots)$, cf.\ \cite[Ch.XIII]{F}. (This is also known as   type $\mathbb{Z}$.)
 \end{rem}

\subsection{Von Neumann regular rings}

Regular rings are characterized by the fact that all modules are flat and also by the fact that all modules are absolutely pure (on either side). 

\begin{rem}\label{c-hered-reg}
 Since pure submodules of Mittag-Leffler modules are Mittag-Leffler, Proposition \ref{c-hered} gives another proof of the known fact  that  von Neumann regular rings are countably  hereditary, cf.\ \cite[Exples.2.32(e)]{L}. In particular, countable regular rings are hereditary \cite[Exples.2.32(e)]{L}, and projectives over regular rings are $\aleph_1$-projective  \cite[Cor.6.8]{habil}. 
\end{rem}

\begin{fact} \label{reg} \mbox{}\cite[Thm.6.7]{habil}.
 Over a regular ring, the following concepts are all the same: Mittag-Leffler, flat Mittag-Leffler, $\sharp$-Mittag-Leffler, $\flat$-Mittag-Leffler, $\aleph_1$-$*$-projective,  $\aleph_0$-$*$-projective, $\aleph_1$-projective,  $\aleph_0$-projective.
\end{fact}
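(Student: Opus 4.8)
The plan is to reduce the whole list to results already in hand, with a single observation about purity over regular rings supplying the last link.

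\emph{Collapsing the four Mittag-Leffler notions.} Over a (von Neumann) regular ring $R$ every left and every right module is flat, and every such module is absolutely pure; thus $\flat_R=\ModR=\sharp_R$. Hence ``flat'' imposes no restriction, so flat Mittag-Leffler coincides with Mittag-Leffler, and, the defining families being literally equal, $\flat$-Mittag-Leffler and $\sharp$-Mittag-Leffler also coincide with $\ModR$-Mittag-Leffler, i.e.\ with Mittag-Leffler. (One could equally invoke Fact \ref{MThm}(1): over $R$ every left module is both flat and absolutely pure, so $\flat$-atomic $=\sharp$-atomic $=\RMod$-atomic $=$ Mittag-Leffler.)

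\emph{Bringing in the almost-projectivity conditions.} A regular ring is left and right semi-hereditary, every finitely generated one-sided ideal being a direct summand and hence projective, so Fact \ref{*}(1) makes conditions (A)--(D) of Conditions \ref{cond} equivalent for every left $R$-module and shows they imply (F); in particular $\aleph_1$-$*$-projective, $\aleph_0$-$*$-projective and flat Mittag-Leffler coincide, and all imply $\aleph_0$-projective. Since moreover a regular ring is left countably hereditary (Remark \ref{c-hered-reg}), Fact \ref{*}(2) -- equivalently Proposition \ref{c-hered}(iv) -- adjoins $\aleph_1$-projective to the list of equivalents. At this point every entry of Fact \ref{reg} except possibly ``$\aleph_0$-projective'' is known to agree with the rest, and $\aleph_0$-projective is implied by them; it remains only to see the converse.

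\emph{Closing the circle.} The missing implication -- and the one place where regularity is used beyond semi- and countable heredity -- rests on the observation that \emph{over a regular ring every submodule of every module is pure.} Indeed, over $R$ every finitely presented module $L$ is flat, hence projective (finitely presented $+$ flat $=$ finitely generated projective, \cite[Lemma 1]{Hat}), so $\Hom(L,-)$ is exact; since a short exact sequence is pure precisely when $\Hom(L,-)$ stays exact on it for every finitely presented $L$, every short exact sequence of $R$-modules is pure. (Alternatively, $R$ is an RD-ring and $rM\cap N=rN$ for every $r\in R$ and every submodule $N\le M$, because $rR=eR$ for an idempotent $e$ with $er=r$, whence every submodule is already RD-pure.) Given this, if $M$ is $\aleph_0$-projective then each finitely generated submodule $N\le M$ is projective by hypothesis and pure by the observation, so $N$ itself witnesses that $M$ is $\aleph_0$-$*$-projective; thus (F) implies (D), closing the circle. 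I expect this purity observation to be the only genuinely non-bookkeeping point, and -- finitely presented modules being projective over $R$ -- it is immediate; the rest is just assembling Facts \ref{*}, \ref{c-hered}, \ref{MThm} and the remarks quoted above.
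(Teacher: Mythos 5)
Your proposal is correct and follows essentially the same route as the paper: reduce (A)--(E) to Fact \ref{*}(2) via countable heredity of regular rings, and close the remaining gap ($\aleph_0$-projective back to the rest) by observing that over a regular ring every embedding is pure, so the unstarred conditions entail the starred ones. The paper's proof is just a terser version of yours, leaving the collapse of the four Mittag-Leffler notions (via $\flat=\ModR=\sharp$) and the justification of ``every embedding is pure'' implicit.
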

\begin{proof}
 As regular rings are countably hereditary, all but the last property is taken care of by Fact \ref{*}(2), and we have to verify only that it implies any of the others.
 But every embedding being pure, 
  the unstarred properties entail the starred ones.
\end{proof}

\section{Torsion-free and divisible modules}
Recall that a submodule $M$  is \emph{RD-pure} in  a left $R$-module $N$ if $M$ is \emph{relatively divisible} in $N$ in the sense that $rN\cap M=rM$ for every $r\in R$.

Torsion-free and divisible modules (as defined in the introduction)  play the same role with respect to RD-pure-exact sequences as flat and absolutely pure modules play with respect to pure-exact sequences. Namely, a module is torsion-free (resp., divisible) if and only if every short exact sequence ending (resp., beginning) in it is RD-pure \cite[Prop.3]{Hat}.

What does elementary  duality have to say about this? The first answer to this question is on the level of individual modules and their character duals, derived as a special case of Fact \ref{PRZ}(1) (which can also be seen directly from the Ext and Tor descriptions, as in \cite[Prop.1.4]{DF}).

\begin{prop}\label{newL}
\begin{enumerate}[\upshape(1)]
 \item A module $M$ is  torsion-free  if and only if  its character module $M^*=\Hom_\mathbb{Z}(M, \mathbb{Q}/\mathbb{Z})$ is  divisible.
 \item A module  $M$ is divisible whenever  its character module  $M^*$  is torsion-free.
\end{enumerate}
\end{prop}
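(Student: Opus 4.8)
The plan is to derive this proposition directly from the extended elementary duality of Fact \ref{PRZ} applied to the A- and F-sentences that characterize, respectively, divisibility and torsion-freeness. The first order of business is to write down such sentences. Torsion-freeness of a left module $M$ was defined by the condition $\ann_M(r)\subseteq\rann(r)M$, i.e., by the family of implications $\forall x\,(rx=0\to \rann(r)\mid x)$, one for each $r\in R$; since $\rann(r)$ need not be finitely generated, the consequent $\rann(r)\mid x$ is in general an infinite sum $\sum_{s\in\rann(r)} s\mid x$, so this is an F-sentence. Dually, divisibility of a right module $M$ is given by $\forall x\,(\lann(r)x=0\to r\mid x)$, one for each $r$, where the antecedent $\lann(r)x=0$ is the (possibly infinitary) conjunction $\bigwedge_{s\in\lann(r)} xs=0$; this is an A-sentence.

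The key step is to check that, under elementary duality, the torsion-free F-sentences and the divisible A-sentences correspond to each other. Using the basic identities from Section \ref{elemdual}, $\D(rx=0)$ is $r\mid x$ and $\D(s\mid x)$ is $sx=0$. So the dual of the F-sentence $\forall x\,(rx=0\to \sum_{s\in\rann(r)} s\mid x)$ is the A-sentence $\forall x\,(\bigwedge_{s\in\rann(r)} sx=0\to r\mid x)$ — on the other side of the ring. It remains to observe that, as $s$ ranges over the right annihilator $\rann(r)$ of $r$ (computed in $R$ as a left module over itself, say), the set $\{s : sx=0\}$ in the conjunction is exactly the set over which divisibility with respect to $r$ quantifies on the opposite side, i.e., the left annihilator of $r$; this is just the left/right bookkeeping and requires only that $sr=0\iff s\in\rann(r)\iff r\in\lann(s)$, i.e., that the two annihilator conditions match after swapping sides. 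Granting this, torsion-free for left modules and divisible for right modules are axiomatized by mutually dual families of sentences.

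Now Fact \ref{PRZ} finishes both parts. For (1): if $M$ is torsion-free, it satisfies all the torsion-free F-sentences; by Fact \ref{PRZ}(1), $M^*$ satisfies their duals, which are exactly the divisibility A-sentences, so $M^*$ is divisible. Conversely, if $M^*$ is divisible, it satisfies the divisibility A-sentences; since those are F-sentences' duals, Fact \ref{PRZ}(1) — the full biconditional for F-sentences — gives that $M$ satisfies the torsion-free F-sentences, hence is torsion-free. For (2): if $M^*$ is torsion-free, it satisfies the torsion-free F-sentences; dualizing, $M^{**}$ satisfies the divisibility A-sentences, but one still needs $M$ itself, so here one instead reads the divisibility A-sentences for $M$, whose duals are the torsion-free F-sentences, and applies Fact \ref{PRZ}(2): an A-sentence is true in $M$ whenever its dual is true in $M^*$ — and by hypothesis $M^*$ satisfies those torsion-free F-sentences. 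Hence $M$ is divisible. Note one does not get the converse of (2), matching the remark after Fact \ref{PRZ} and W\"urfel's phenomenon.

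The main obstacle is purely bookkeeping: making the side-switching in the annihilator conditions precise, and being careful that the conjunction/sum index sets in the A- and F-sentences are literally the dual pp formulas (not merely cofinal or equivalent families), so that the duality of sentences is on the nose rather than up to logical equivalence over the relevant class. Once the two axiom families are identified as duals, everything is a one-line invocation of Fact \ref{PRZ}; alternatively, as the authors note, the whole statement can be seen from the $\Tor$/$\Ext$ descriptions of torsion-free and divisible together with the standard isomorphisms $\Tor_1(N,M)^*\cong\Ext^1(N,M^*)$, which is the route taken in \cite[Prop.1.4]{DF}.
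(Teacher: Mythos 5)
Your proposal is essentially the paper's own proof: the paper's argument is precisely the one-line instruction to combine Fact \ref{PRZ} with the observation that torsion-freeness and divisibility are axiomatized by mutually dual families of F- and A-sentences, namely the special cases of clauses (iii) of Facts \ref{flat} and \ref{abspure} in which $\phi$ is $rx=0$, resp.\ $r|x$. One correction to the bookkeeping, which you rightly single out as the only delicate point but then get backwards: the dual of the left-module F-sentence $\forall x\,(rx=0\to\sum_{s\in\rann(r)}s|x)$ is $\forall x\,\bigl((\bigwedge_{s\in\rann(r)}xs=0)\to r|x\bigr)$, whose conjunction is indexed by the \emph{right} annihilator $\rann(r)=\{s: rs=0\}$, not by $\lann(r)$; and this is exactly right-module divisibility, because the mirror image of the left-module condition ``$\lann(r)m=0$ implies $r|m$'' is ``$m\,\rann(r)=0$ implies $r|m$.'' Your asserted equivalence ``$sr=0\iff s\in\rann(r)$'' is false, and no reconciliation between $\lann$ and $\rann$ is needed: the same ideal $\rann(r)$ indexes both the sum in the F-sentence and the conjunction in its dual A-sentence, so the duality of the two axiom families holds on the nose. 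With that slip repaired, the rest is, as you say, a direct invocation of Fact \ref{PRZ}(1) for part (1) and Fact \ref{PRZ}(2) for part (2).
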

\begin{proof}
 Use Fact \ref{PRZ}(1)  and the duality of the F- and A-sentences axiomatizing the two concepts as given in clauses (iii) of Facts \ref{flat} and \ref{abspure}.
\end{proof}

In analogy with W\"urfel's theorem as stated after Fact \ref{LambWurf}, we will see that the converse of (2) is not true in general. Moreover, we are going to exhibit for which rings exactly it holds.

\begin{cor}\label{subdual}
 The definable subcategory generated by $\Tf$ is contained in the dual of the definable subcategory generated by $\Div$.
\end{cor}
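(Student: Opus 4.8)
The plan is to first reduce the statement to a containment of single modules. Since $\D\langle\Div\rangle$ is, by the very definition of the dual, a definable subcategory of $\RMod$, and $\langle\Tf\rangle$ is by definition the smallest definable subcategory containing $\Tf$, it suffices to show $\Tf\subseteq\D\langle\Div\rangle$; the desired inclusion $\langle\Tf\rangle\subseteq\D\langle\Div\rangle$ then follows automatically.

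To prove $\Tf\subseteq\D\langle\Div\rangle$ I would take an arbitrary torsion-free left module $M$. By Proposition \ref{newL}(1) its character module $M^*=\Hom_\mathbb{Z}(M,\mathbb{Q}/\mathbb{Z})$ is divisible, so in particular $M^*\in\Div\subseteq\langle\Div\rangle$. Now I invoke the character-module formulation of duality recorded at the end of the subsection on definable subcategories: for any definable subcategory $\cal X$, a module $X$ lies in $\cal X$ if and only if $X^*$ lies in $\D\cal X$. Applying this with $\cal X=\D\langle\Div\rangle$, so that $\D\cal X=\langle\Div\rangle$ because $\D^2=1$, we conclude that $M\in\D\langle\Div\rangle$ precisely because $M^*\in\langle\Div\rangle$. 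This finishes the argument.

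If one prefers to argue syntactically rather than via character modules, there is an equivalent route through Fact \ref{inclusion}: one checks that each defining axiom $\forall x(\D\psi\to\D\phi)$ of $\D\langle\Div\rangle$, arising from an inequality $\phi\leq_{\Div}\psi$ of pp formulas for right modules, holds in every $M\in\Tf$. Indeed $M^*$ is divisible, hence satisfies $\forall x(\phi\to\psi)$; as this is a finitary pp implication, it is simultaneously an A-sentence and an F-sentence, and Fact \ref{PRZ} transfers it to $M\models\forall x(\D\psi\to\D\phi)$.

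I do not expect a genuine obstacle here: the whole content sits in Proposition \ref{newL}(1) together with the character-dual description of definable duality. The one place to be slightly careful is that $\Div$ need not itself be a definable subcategory, so the equivalence ``$X\in\cal X\Leftrightarrow X^*\in\D\cal X$'' must be applied to the definable subcategory $\D\langle\Div\rangle$ (equivalently, to $\langle\Div\rangle$), not to $\Div$ directly. It is also worth stressing that only the one-sided inclusion is obtained, because Proposition \ref{newL}(2) provides merely one implication---mirroring the fact, noted after Fact \ref{LambWurf}, that the converse of W\"urfel's statement fails; the reverse containment, and hence an actual duality between $\langle\Tf\rangle$ and $\langle\Div\rangle$, is exactly what will later force a hypothesis such as P-coherence on the ring.
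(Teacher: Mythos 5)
Your argument is correct and rests on exactly the same two ingredients as the paper's proof: Proposition \ref{newL}(1) together with the character-dual formulation of elementary duality (and your closing caveats about one-sidedness and about not applying the criterion to $\Div$ itself are well placed). Your alternative syntactic route through Fact \ref{inclusion} is in fact verbatim the paper's proof, and your primary module-level version --- establishing $\Tf\subseteq\D\langle\Div\rangle$ and invoking minimality of $\langle\Tf\rangle$ --- is just a clean semantic repackaging of it.
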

\begin{proof}
As  $\D\psi\leq_{M}\D\phi$ if and only if $\phi\leq_{M^*}\psi$ (see Section \ref{elemdual}), the first part of the proposition shows that $\phi\leq_{\Div}\psi$  implies  $\D\psi\leq_{\Tf}\D\phi$. Hence  $\D\psi\leq_{\D\langle\Div\rangle}\D\phi$   implies  $\D\psi\leq_{\Tf}\D\phi$. Fact \ref{inclusion} now yields $\langle\Tf\rangle\subseteq \D\langle\Div\rangle$.
\end{proof}

\begin{cor}
 Every $\Div$-Mittag-Leffler module is  $\Tf$-atomic.
\end{cor}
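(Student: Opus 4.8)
The plan is to deduce this immediately from Corollary \ref{subdual} together with Fact \ref{MThm}(1), with no new construction needed. First I would apply Fact \ref{MThm}(1) with $\cal K=\Div$ and $\cal F=\D\langle\Div\rangle$: these two classes generate the mutually dual definable subcategories $\langle\Div\rangle$ and $\D\langle\Div\rangle$ (using $\D^2=1$), so the fact tells us that a left module $M$ is $\Div$-Mittag-Leffler if and only if it is $\D\langle\Div\rangle$-atomic.

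Next I would compare this atomicity with $\Tf$-atomicity. By Corollary \ref{subdual} we have $\langle\Tf\rangle\subseteq\D\langle\Div\rangle$. Since passing to a smaller class of modules can only enlarge the associated preorder on pp formulas, this yields the inclusion of relations $\leq_{\D\langle\Div\rangle}\ \subseteq\ \leq_{\Tf}$ (recall that $\leq_{\langle\Tf\rangle}$ and $\leq_{\Tf}$ coincide). Consequently, if a pp formula $\phi$ is a $\D\langle\Div\rangle$-generator of the pp type $p$ of some finite tuple in $M$ --- that is, $\phi\in p$ and $\phi\leq_{\D\langle\Div\rangle}\psi$ for every $\psi\in p$ --- then also $\phi\leq_{\Tf}\psi$ for every $\psi\in p$, so $\phi$ already $\Tf$-generates $p$. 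Hence every $\D\langle\Div\rangle$-atomic module is $\Tf$-atomic.

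Putting the two steps together, $M$ being $\Div$-Mittag-Leffler implies $M$ is $\D\langle\Div\rangle$-atomic, hence $\Tf$-atomic, which is the claim. There is essentially no obstacle here; the only point to keep straight is the direction of the containments: the inclusion of definable subcategories runs one way, the induced inclusion of formula-preorders runs the other, and it is the \emph{larger} definable subcategory that yields the \emph{stronger} atomicity notion. (The reverse implication --- and thus equality of $\Tf$-atomic and $\Div$-Mittag-Leffler --- would require $\langle\Tf\rangle=\D\langle\Div\rangle$ rather than mere containment; this is precisely where the P-coherence hypothesis of Theorem \ref{divtfD} is needed, which explains why the corollary as stated is only one-directional.)
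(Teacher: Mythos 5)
Your proof is correct and follows essentially the same route as the paper: the paper likewise combines Fact \ref{MThm}(1) (identifying $\Div$-Mittag-Leffler with $\D\langle\Div\rangle$-atomic) with the containment of relations $\leq_{\D\langle\Div\rangle}\ \subseteq\ \leq_{\Tf}$ extracted from Corollary \ref{subdual}. Your version merely spells out in more detail the steps the paper compresses into two sentences, and your closing remark about why the converse needs P-coherence matches the paper's subsequent discussion.
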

\begin{proof}
As in the previous proof, $\leq_{\D\langle\Div\rangle}$ implies $\leq_{\Tf}$. Thus $\D\langle\Div\rangle$-atomic, which is the same as $\Div$-Mittag-Leffler, implies $\Tf$-atomic. \end{proof}

The converse will follow over right P-coherent rings from the next result showing that over such rings $\Div$ and $\Tf$ form dual definable subcategories.

The proof is a version of the proof of W\"urfel's theorem  given in \cite[Thm.4.4]{PRZ2}. Considering only the A-statements axiomatizing divisibility (as opposed to full absolute purity) allows us to derive a result that corresponds exactly to what is known for absolutely pure right  modules and flat left modules and right coherent rings, Fact \ref{ES}. Precursors were, first of all,   \cite[Prop.8(ii)]{Hat}, which proved (i) $\Leftrightarrow$ (ii), then   \cite[Prop.1.5]{DF}, where (iv) was  verified for right coherent rings, and finally \cite[Thm.2.7]{MD}, which exhibited the
 equivalence of (i)--(iv) (together with the equivalent condition of existence of $\Tf$-preenvelopes for every left $R$-module).

\begin{thm}\label{divtfD}
\begin{enumerate}  [\upshape (1)]
 \item The following are equivalent for any ring $R$.
 
 \begin{enumerate}  [\upshape (i)]
\item $R$ is right P-coherent.
\item The class $\Tf$ of torsion-free left $R$-modules is closed under direct product.
\item The class $\Div$ of divisible right $R$-modules is closed under direct limit.
\item A right $R$-module $M$ is divisible if and only if its character module $M^*$ is a torsion-free left $R$-module.

\item $\Tf$  is (axiomatizable by finitary pp implications and hence) a definable subcategory.
\item  $\Div$ is (axiomatizable by finitary pp implications  and hence) a definable subcategory.
\end{enumerate} 
\item In this case,  the definable subcategories  $\Div$  and  $\Tf$ are (elementarily) dual.
\end{enumerate} 
\end{thm}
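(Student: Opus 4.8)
The theorem is a cyclic chain of equivalences plus a duality statement, so the plan is to prove a sequence of implications circling through (i)--(vi) and then read off part (2). I would organize the cycle as $(i)\Rightarrow(ii)\Rightarrow(v)\Rightarrow(vi)\Rightarrow(iii)\Rightarrow(i)$, with $(iv)$ woven in via the character-module machinery, since $(iv)$ is essentially a strengthening of Proposition \ref{newL}(2) and is the natural bridge between the two sides.

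\textbf{The key steps.} First, $(i)\Rightarrow(v)$: assume $R$ is right P-coherent, i.e. every left annihilator $\lann(r)$, and more generally every $\D\phi({_R}R)$ for a unary pp formula $\phi$, is finitely generated. By Fact \ref{abspure}(iii) the analogous statement holds dually, but here we need the torsion-free side: a left module is torsion-free iff $\ann_M(r)\subseteq\rann(r)M$ for all $r$, and by Hattori this is $\Tor_1(R/rR,M)=0$. The point is that P-coherence makes the relevant right ideals $\D\phi({_R}R)$ finitely generated, so the F-sentences from Fact \ref{flat}(iii) restricted to the generators $\phi$ that define torsion-freeness become \emph{finitary}; hence $\Tf$ is axiomatized by finitary pp implications and is a definable subcategory (closure under products, direct limits, pure submodules being automatic for finitary pp-axiomatized classes). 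Dually, via the A-/F-sentence correspondence of Fact \ref{PRZ} and the explicit duals $\D(rx=0)\sim(r|x)$, $\D(r|x)\sim(rx=0)$ from Section \ref{elemdual}, the A-sentences axiomatizing $\Div$ become finitary too, giving $(i)\Rightarrow(vi)$ and, simultaneously, that the axiom sets for $\Tf$ and $\Div$ are term-by-term dual --- this is exactly part (2). Then $(v)\Rightarrow(ii)$ and $(vi)\Rightarrow(iii)$ are trivial (a definable subcategory is closed under products and direct limits). For $(ii)\Rightarrow(i)$: if $\Tf$ is closed under products, apply this to a product of copies of $R/rR$-style torsion-free modules --- more precisely, take $_RR$ which is torsion-free, form a suitable power, and use that failure of finite generation of $\lann(r)$ would let one build an element in the product witnessing $rm=0$ without $\rann(r)\mid m$; this is the standard Chase-type argument showing that closure of a torsion-theoretic class under products forces a coherence condition on annihilators. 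Symmetrically $(iii)\Rightarrow(i)$ uses that a direct limit of divisible modules fails to be divisible unless the left annihilators are finitely generated. Finally $(iv)$: Proposition \ref{newL}(2) gives one direction unconditionally; for the converse under $(i)$, use Fact \ref{PRZ}(1) applied to the now-finitary F-sentences axiomatizing torsion-freeness --- since they are finitary, their duals behave well in both directions, so $M^*$ torsion-free forces $M$ divisible; conversely Proposition \ref{newL}(1) dualized. So $(i)\Rightarrow(iv)$, and $(iv)\Rightarrow$ one of the closure conditions (e.g. $(iii)$, since character duals convert products to a retract of a product and direct limits interact with $(-)^*$) closes the loop.

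\textbf{Part (2).} Once $(i)$ holds, the dual of $\Tf$ is by definition the definable subcategory on the other side axiomatized by the duals of the implications holding in $\Tf$. Because under P-coherence both $\Tf$ and $\Div$ are axiomatized by the \emph{explicitly dual} finitary A-/F-sentence pairs from clauses (iii) of Facts \ref{flat} and \ref{abspure} (restricted to the torsion-free/divisible axioms), we get $\D\Tf=\Div$ directly, just as in the coherent case of Fact \ref{ES}(2). Alternatively one invokes Corollary \ref{subdual} for the inclusion $\langle\Tf\rangle\subseteq\D\langle\Div\rangle$ and proves the reverse inclusion the same way with the roles of $\Div$ and $\Tf$ swapped (using Proposition \ref{newL} on the other side), both inclusions becoming equalities because $\Tf$ and $\Div$ are themselves definable by $(v)$, $(vi)$.

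\textbf{Main obstacle.} The delicate point is the implication $(ii)\Rightarrow(i)$ (and its twin $(iii)\Rightarrow(i)$): one must actually \emph{produce} a counterexample module when $\lann(r)$ is infinitely generated. The construction --- taking an appropriate power or direct limit and exhibiting an element that is annihilated by $r$ but not divisible by $\rann(r)$ in the product/limit --- is where all the real work sits; it is the analogue of Chase's theorem characterizing coherent rings by closure of flats under products, and adapting it from the full-purity setting to RD-purity (and to the one-sided P-coherence condition, which only constrains \emph{principal} ideals rather than all finitely generated ones) requires care about exactly which annihilators are being controlled. Everything else --- the finitary-axiomatization bookkeeping, the character-module transfers --- is routine once Section \ref{elemdual} and Facts \ref{flat}, \ref{abspure}, \ref{PRZ} are in hand.
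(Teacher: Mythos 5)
Your cycle for (i), (ii), (iii), (v), (vi) and your part (2) match the paper's proof in substance: (ii)$\Leftrightarrow$(v) and (iii)$\Leftrightarrow$(vi) because product/direct-limit closure is the only missing ingredient for definability; (i)$\Rightarrow$(v),(vi) because right P-coherence makes the right annihilators $\rann(r)$ finitely generated, so the defining F- and A-sentences $rx=0\to\rann(r)|x$ and $\bigwedge xr_i=0\to r|x$ become finitary and term-by-term dual; (ii)$\Rightarrow$(i) is exactly the Chase-type argument you describe (the paper puts the enumeration of $\rann(r)$ into $R^{|R|}$ and reads off finite generation). One small slip: right P-coherence controls the \emph{right} annihilators $\rann(r)$, not $\lann(r)$, and certainly not all ideals $\D\phi({_R}R)$; but since the torsion-freeness axioms only involve $\rann(r)$, your restriction "to the generators that define torsion-freeness" is the correct fix.

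The genuine gap is the return from (iv). You prove (i)$\Rightarrow$(iv) correctly, but your proposed closing move "(iv)$\Rightarrow$(iii), since character duals convert products to a retract of a product and direct limits interact with $(-)^*$" does not work. If $M=\varinjlim M_i$ with each $M_i$ divisible, then $M^*\cong\varprojlim M_i^*$ is a \emph{submodule of a product} of torsion-free modules, and $\Tf$ is closed under neither submodules nor products in general --- those closure properties are precisely the right-PF and right-P-coherence conditions you are trying to derive, so the argument is circular. The paper instead proves (iv)$\Rightarrow$(i) by an explicit construction, and this is the hardest part of the whole proof: assuming $I=\rann(r)$ is not finitely generated, list the finitely generated right ideals $I_i\subseteq I$, take $M=\bigoplus_i R/I_i$ with a character $f=\sum f_i$ killing $\ann_M(I)$ but no $1+I_i$, embed $M$ in a divisible module $N$, extend $f$ to a character $g$ of $N$ killing $\ann_N(I)=Nr$ (using Fact \ref{abspure} to identify this pp subgroup), so that $rg=0$ in $N^*$; then torsion-freeness of $N^*$ would force $g=\sum_k s_kg_k$ with the $s_k$ generating some $I_i$, whence $g(1+I_i)=0$, a contradiction. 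Without this (or some substitute for (iv)$\Rightarrow$(something)), condition (iv) is only shown to be implied by, not equivalent to, the others. Your "main obstacle" paragraph flags (ii)$\Rightarrow$(i) as the hard step, but that one is a short computation; the real work sits in (iv)$\Rightarrow$(i).
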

\begin{proof}
First of all, (ii) $\Leftrightarrow$ (v) and  (iii) $\Leftrightarrow$ (vi), because (ii) and, resp., (iii) express the only property missing in order to be a definable subcategory (i.e., a subcategory closed under pure submodule (which is true for both), direct limit and direct product). 

(ii) $\Rightarrow$ (i). 
Let
$\rann(r)=\{r_i:i\in |R|\}$ and consider $\rho=(r_i:i\in R)\in R^{|R|}$\,. Then $r\rho=0$, hence, by
torsion-freeness, there must be $s_j\in\rann(r)$ and $\rho_j=(\rho_{ji}:i\in
R)\in{R^{|R|}}$ ($j<k$) such that $\rho=\sum_{j<k}s_j\rho_j$\,. Then
$r_i=\sum_{j<k}s_j\rho_{ji}\in\sum_{j<k}s_jR$ for all $i$\,, whereby
$\rann(r)=\sum_{j<k}s_jR$\,.

For (iii) $\Rightarrow$ (i) one may adjust the proof of the corresponding result about $\sharp$ being definable using reduced products as given in the proof of \cite[Thm.3.4.24]{P2}.

Assuming (i) we show that the F-sentence expressing that $M^*$ is torsion-free becomes finitary, namely an implication of pp formulas. Its truth in  $M^*$ is therefore equivalent to the truth of its dual in $M$, which is the corresponding A-sentence that also becomes finitary for the same reason.

Here are the details. Let $M$ be a divisible  right $R$-module. In order to show that $M^*$ is torsion-free, let $rf=0$ with $f$ a character on $M$ and $r$ in $R$ (the other direction of (iv) is always true by Proposition \ref{newL}(2)). Write the right annihilator of $r$ as $I=\sum_{i<n}r_iR$. We have to show that $I$ divides $f$. This is equivalent to saying that $f$ satisfies the formula $\exists y_0 \dots y_{n-1} (x=\sum_{i<n} r_i y_i)$. It suffices therefore to show that  $M^*$ satisfies the pp implication $rx=0\to \exists y_0 \dots y_{n-1} (x=\sum_{i<n}r_i y_i)$ (which is what the original F-sentence is equivalent to under the assumption on $I$). Its dual is $(\bigwedge_{i<n} x r_i=0) \to r|x$, which is satisfied in $M$ because of  divisibility (for the antecedent is equivalent to $xI=0$), as desired. This proves that (i) implies (iv), (v), and (vi) (and thus the equivalence of (i) -- (iii) and (v) and (vi)).

For the remaining direction (iv) $\Rightarrow$ (i), assume $I :=\rann(r)$ is  not finitely generated. We exhibit a divisible module whose character dual is not torsion-free.

Let $I_i$ be a list of all finitely generated right ideals inside $I$. Consider $M_i= R/I_i$ for all $i$ and let $A_i$ be the annihilator of $I$ in $M_i$. As $1+I_i\not\in A_i$, there's a character $f_i$ on $M_i$ annihilating all of $A_i$ but not  $1+I_i$. 

Now form
the direct sum $M$ of all $M_i$. Since the annihilator $A$ of $I$ in $M$ is  the direct sum of all the $A_i$, the character $f$ on $M$ which is the sum of all the $f_i$ annihilates all of $A$. 

Choose any divisible module $N$ extending $M$ (e.g., its injective hull) and let $B$ be  the annihilator of $I$ in $N$. As $B\cap M=A$, setting $f'=0_B+f$ defines  a map $f'$ on $B+M$ that extends $f$ and annihilates $B$. Using the injectivity of the abelian group $\mathbb{Q}/\mathbb{Z}$ we finally extend $f'$ to a character $g$ on $N$. Then $g$ annihilates $B$, but none of the $1+I_i\in M_i\subseteq M\subseteq N$.

Since  $B=\ann_N I$ and $I$ is the right annihilator of $r$ (i.e., the subgroup defined in $_RR$ by $rx=0$), the description of pp subgroups of absolutely pure modules, Fact \ref{abspure}, yields that $B$ is the subgroup defined in $N$ by the dual $r|x$ of $rx=0$. In other terms, $B=Nr$. Thus $g(Nr)=0$, which means that $rg=0$, i.e., $g$ satisfies  $rx=0$ in $N^*$.

If $N^*$ were torsion-free, there would be finitely many $s_k$ in $I$ and $g_k$ in $N^*$ such that $g=\sum_k s_k g_k$. Let $I_i$ be the right ideal generated by these $s_k$. Then clearly $g(1+I_i)=0$, contradiciting the choice of $g$.

(2) In view of Fact \ref{inclusion}, it suffices to verify that $\phi\leq_{\Div}\psi$ if and only if $\D\psi\leq_{\Tf}\D\phi$. 

The direction from left to write follows from Proposition \ref{newL}(1): to verify $\D\psi\leq_{\Tf}\D\phi$, let $M\in\Tf$. Since then $M^*\in\Div$, we have  $\phi\leq_{M^*}\psi$, hence $\D\psi\leq_{M}\D\phi$ by what was said at the end of Section \ref{elemdual} about duality and the character dual.

Conversely, if $M\in\Div$, then $M^*\in\Tf$ by (iv), hence, by hypothesis, $\D\psi\leq_{M^*}\D\phi$ and therefore $\phi\leq_{M}\psi$, as desired.
\end{proof}

\begin{cor}\label{DivML}
 Over a right P-coherent ring, a left module is $\Div$-Mittag-Leffler if and only if it is $\Tf$-atomic (and a right module is $\Tf$-Mittag-Leffler if and only if it is $\Div$-atomic).
 \end{cor}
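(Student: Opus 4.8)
The plan is to combine the duality established in Theorem~\ref{divtfD} with the general characterization of relativized Mittag-Leffler modules in Fact~\ref{MThm}(1). First I would observe that the hypothesis of Fact~\ref{MThm}(1) is precisely that $\cal K$ and $\cal F$ (classes on opposite sides) generate mutually dual definable subcategories; by Theorem~\ref{divtfD}(2), over a right P-coherent ring $R$ this holds with $\cal K=\Div_R$ and $\cal F={_R\Tf}$, since both are already definable subcategories (not merely generators of ones) and $\D\Div=\Tf$. Hence Fact~\ref{MThm}(1) applies verbatim and yields that a left module is $\Div$-Mittag-Leffler if and only if it is $\Tf$-atomic.

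For the parenthetical second assertion I would note that right P-coherence of $R$ is exactly the condition (i) of Theorem~\ref{divtfD}(1) read for the ring $R$, and that $\Div_R$ and ${_R\Tf}$ being dual is a symmetric relation: applying $\D$ to the equality $\D\Div_R={_R\Tf}$ (legitimate since $\D^2=1$) gives $\Div_R=\D({_R\Tf})$, so the pair $({_R\Tf},\Div_R)$—now with $\cal K={_R\Tf}$ a class of \emph{right} modules and $\cal F=\Div_R$ a class of \emph{left} modules—again satisfies the hypothesis of Fact~\ref{MThm}(1). That fact then delivers: a right module is $\Tf$-Mittag-Leffler if and only if it is $\Div$-atomic.

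There is essentially no obstacle here: the corollary is a direct instantiation of Fact~\ref{MThm}(1) once Theorem~\ref{divtfD} supplies the needed duality, so the only thing to be careful about is bookkeeping of sides—making sure that ``$\Div$-Mittag-Leffler'' is being applied to left modules (divisible modules live on the right) and ``$\Tf$-atomic'' correspondingly, and then swapping left/right consistently for the parenthetical statement. I would state the proof in two short sentences: one invoking Fact~\ref{MThm}(1) with $(\cal K,\cal F)=(\Div,\Tf)$ via Theorem~\ref{divtfD}(2), and one remarking that, by symmetry of elementary duality, the same fact applies with the roles of $\Div$ and $\Tf$ interchanged.
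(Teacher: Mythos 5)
Your proposal is correct and is exactly the argument the paper intends: Corollary~\ref{DivML} is stated without a separate proof precisely because it is the instantiation of Fact~\ref{MThm}(1) with $(\cal K,\cal F)=(\Div_R,{_R\Tf})$, which applies once Theorem~\ref{divtfD}(2) supplies the duality of these (already definable) subcategories over a right P-coherent ring. Your handling of the parenthetical statement via the symmetry $\D^2=1$ matches the paper's implicit reading as well.
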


As we know from Fact \ref{MThm}, for the conclusion to be true, all we need is that $\Div$ and $\Tf$ \emph{generate} mutually dual subcategories. This may well be true over a bigger class of rings, or, as in  the case of absolutely pure and flat modules, over all rings, see Fact \ref{IvoH}---however, we don't know. (One point is that the proof for absolutely pure and flat modules relies on the description in them of \emph{all} pp subgroups, which we don't seem to have here.)

\begin{ques}\label{ques}
What are the rings over which the definable subcategories generated by $\Div$ and $\Tf$ are mutually dual?
\end{ques}

\section{Torsion theory}

We are interested in the torsion theory cogenerated by $\Tf$ as considered in \cite{Hat} (and follow the terminology of 
 \cite[Ch.VI]{S}). In particular, we call a module $T$  \emph{torsion} if $\Hom(T, F)=0$ for every $F\in\Tf$ \cite[p.153]{Hat}. Denote the class of (left) torsion modules by $\To$.

Hattori proves that $\Tf$ is a torsion-free class if and only if  the ring is right PP. More precisely, \cite[Prop.5]{Hat} verifies that  $\Tf$ is always closed under extension,  \cite[Prop.7]{Hat} that it is closed under submodule if and only if the ring is right PF, and   \cite[Prop.8(ii)]{Hat} that it is closed under direct product if and only if it is right P-coherent. Now recall from Section \ref{ring}  that PF + P-coherent = PP. See also  \cite[Prop.13]{Hat}.

So over a right PP ring  $\Tf$ cogenerates the  torsion theory $(\To, \Tf)$. But just as in Hattori's work, several of our results on $(\To, \Tf)$ do not depend on it actually being a torsion theory. 

For example, it is shown in \cite[Prop.14]{Hat} that the tensor product of a divisible module with a torsion module is zero. As products of divisible modules are divisible \cite[Prop.8$^\prime$]{Hat}, this trivially shows 
 
\begin{prop}\label{torsionisML}
Any torsion module is $\Div$-Mittag-Leffler (=$\Tf$-atomic), over any ring. 
\end{prop}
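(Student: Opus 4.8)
The plan is to use the tensor-product criterion for $\cal K$-Mittag-Leffler modules directly, with $\cal K=\Div$. Let $M$ be a torsion left $R$-module. We must show that for every family $\{N_i:i\in I\}$ of divisible right $R$-modules, the canonical map $(\prod_I N_i)\otimes M\to\prod_I(N_i\otimes M)$ is a monomorphism. First I would observe that, by \cite[Prop.$8^\prime$]{Hat}, the product $\prod_I N_i$ is again divisible, since $\Div$ is closed under direct products; this is the key structural fact that makes the argument work. Then, by \cite[Prop.14]{Hat}, the tensor product of a divisible right module with a torsion left module is zero, so $(\prod_I N_i)\otimes M=0$. Hence the canonical map in question has zero domain and is trivially injective. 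Therefore $M$ is $\Div$-Mittag-Leffler, and by Corollary \ref{DivML} (or rather by the relativized theory, Fact \ref{MThm}(1), which does not require P-coherence for the implication we are using) it is $\Tf$-atomic.

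The only subtlety, and the step I expect to be the main (indeed the sole) obstacle, is making sure no hypothesis on the ring is smuggled in. The statement claims this holds over \emph{any} ring, so I would double-check that both invoked results of Hattori are ring-independent: that divisible modules are closed under products and that divisible $\otimes$ torsion $=0$ hold with no coherence or PP assumption. Both are stated unconditionally in \cite{Hat} (the text of the excerpt explicitly notes ``As products of divisible modules are divisible \cite[Prop.$8^\prime$]{Hat}'' and ``the tensor product of a divisible module with a torsion module is zero \cite[Prop.14]{Hat}''), so there is no issue. Note in particular that we do \emph{not} need $(\To,\Tf)$ to be a genuine torsion theory here, nor do we need $\Div$ to be a definable subcategory.

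For the parenthetical identification ``$=\Tf$-atomic'', I would be slightly careful: the clean equivalence ``$\Div$-Mittag-Leffler $\Leftrightarrow$ $\Tf$-atomic'' of Corollary \ref{DivML} is stated for right P-coherent rings, whereas Proposition \ref{torsionisML} is claimed over any ring. So in the proof I would phrase the $\Tf$-atomic claim via Corollary \ref{subdual}: every $\Div$-Mittag-Leffler module is $\Tf$-atomic over any ring (that corollary does not assume P-coherence), and it is exactly the converse direction that needs P-coherence. Alternatively one can simply cite Fact \ref{MThm}(1) with $\cal K=\Div$ and $\cal F=\langle\Tf\rangle$ noting that $\D\langle\Div\rangle\supseteq\langle\Tf\rangle$ always, which is enough to pass from $\Div$-Mittag-Leffler to $\Tf$-atomic. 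Since the vanishing $(\prod_I N_i)\otimes M=0$ does all the real work, the write-up will be just a couple of lines.
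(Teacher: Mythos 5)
Your proposal is correct and is essentially the paper's own argument: the paper derives the proposition "trivially" from exactly the two Hattori facts you cite, namely that products of divisible modules are divisible and that divisible $\otimes$ torsion is zero, so the domain of the canonical map vanishes. Your extra care about the parenthetical ``$=\Tf$-atomic'' (using the one direction that holds over any ring, via Corollary \ref{subdual} and its corollary, rather than the P-coherent equivalence of Corollary \ref{DivML}) is consistent with what the paper actually establishes.
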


The radical associated with  the  torsion theory $(\To, \Tf)$ is denoted $\T$, where $\T(M)$ is the largest torsion submodule of $M$. This  exists, by \cite[Cor.\ to Prop.12]{Hat}, even for arbitrary rings (where it may not be a radical), which means that  $\T$ is always a preradical. 

We now turn to the easy direction of our final result.

\begin{cor}\label{EasyDirect}
 Suppose $M$ is a module with $\T(M)$ pure in $M$.
 
 If  $M/\T(M)$ is $\Div$-Mittag-Leffler, so is $M$.
\end{cor}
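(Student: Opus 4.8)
The plan is to realize $M$ as a pure extension of two modules that are already known to be $\Div$-Mittag-Leffler, and then quote closure of the class of $\Div$-Mittag-Leffler modules under pure extensions, Fact \ref{MThm}(3).

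First I would observe that the hypothesis that $\T(M)$ is pure in $M$ says precisely that the canonical short exact sequence
\[
0 \to \T(M) \to M \to M/\T(M) \to 0
\]
is pure-exact (in the sense of full purity). Next, $\T(M)$ is torsion by its very definition as the largest torsion submodule of $M$, so Proposition \ref{torsionisML} applies and tells us $\T(M)$ is $\Div$-Mittag-Leffler (over any ring). The quotient $M/\T(M)$ is $\Div$-Mittag-Leffler by assumption. Thus $M$ lies in a pure-exact sequence whose submodule and quotient are both $\Div$-Mittag-Leffler, and Fact \ref{MThm}(3) — which asserts that the class of $\cal K$-Mittag-Leffler modules is closed under pure extensions — yields that $M$ itself is $\Div$-Mittag-Leffler.

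There is essentially no obstacle here; the corollary is a direct combination of Proposition \ref{torsionisML} with the closure property in Fact \ref{MThm}(3), which is exactly why it is flagged as "the easy direction". The one point to keep in mind is that "pure extension" in Fact \ref{MThm}(3) refers to full purity rather than RD-purity, and it is precisely the standing hypothesis "$\T(M)$ pure in $M$" that supplies this; as the surrounding discussion stresses, the genuinely delicate work — producing full purity of $\T(M)$ in $M$ over suitable (semihereditary RD-) rings, rather than merely the RD-purity that comes for free from $\Tf$ being a torsion-free class — is carried out elsewhere and is not part of this statement.
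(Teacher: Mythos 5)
Your argument is identical to the paper's: both invoke Proposition \ref{torsionisML} to see that $\T(M)$ is $\Div$-Mittag-Leffler, and then apply the closure under pure extensions from Fact \ref{MThm}(3) to the pure-exact sequence $0\to\T(M)\to M\to M/\T(M)\to 0$. The proposal is correct and matches the paper's proof.
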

\begin{proof} 
By  Proposition \ref{torsionisML}, the torsion part $\T(M)$ is $\Div$-Mittag-Leffler. So, if also  $M/\T(M)$ is  $\Div$-Mittag-Leffler, then, being a pure extension of these, so is $M$ (cf.\ Fact \ref{MThm}(3)). 
\end{proof}

The remainder of the paper is  devoted to  the converse of this, see especially Theorem \ref{thmRD}.

Consider two subfunctors of $\T$. First of all, there is the classical $\tor$,  which collects all the  $\Sreg$-torsion elements (with $\Sreg$, the multiplicatively closed subset of all regular elements, i.e., non-zero-divisors, of $R$). In other terms, $\tor(M)=\{a\in M\, |\,  \textrm{$sa=0$ for some $s\in \Sreg$}\}$. This is a preradical if  $\Sreg$ is a left denominator set, i.e., if the ring is left Ore \cite[Prop.II.1.6]{S}. It is then actually a torsion radical (i.e., also hereditary)  \cite[Exple.VI.1.2, p.138]{S}.

 And then we have an `infinitary pp functor,' $\s$, which, in a given module $M$,  by definition singles out the sum of all pp subgroups $\theta(M)$ where $\theta$ runs over all (1-place) pp formulas that are $\Tf$-equivalent to $x=0$. Call $\s$ the \emph{elementary torsion preradical} of $M$.
That it is functorial follows from the fact that pp formulas are preserved by homomorphisms. It is easy to see that it is in fact a radical when $\s(M)$ is a pure submodule of $M$ for all $M$. 

\begin{lem} For any ring, $\s$ is a subpreradical of the preradical $\T$.

 For every module $M$, we have $\tor(M)\subseteq \s(M)\subseteq\T(M)$.
\end{lem}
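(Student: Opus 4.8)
The plan is to establish the two containments $\tor(M)\subseteq\s(M)$ and $\s(M)\subseteq\T(M)$ separately; functoriality of $\s$ as a preradical is immediate from the fact that pp formulas are preserved under homomorphisms (so $\s$ restricts on submodules and corestricts on quotients), and the statement ``$\s$ is a subpreradical of $\T$'' follows formally once we know $\s(M)\subseteq\T(M)$ for every $M$ together with this functoriality. So the content is the chain of inclusions.

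For $\tor(M)\subseteq\s(M)$: take $a\in\tor(M)$, so $sa=0$ for some regular $s\in\Sreg$. Then $a$ satisfies the pp formula $\theta(x)$ given by $sx=0$ in $M$. I claim $\theta$ is $\Tf$-equivalent to $x=0$, i.e. $\theta(F)=0$ for every torsion-free $F$: indeed, by the definition of torsion-freeness in the introduction, if $sb=0$ in $F$ then $\rann(s)\mid b$; but $s$ is regular, so $\rann(s)=0$, forcing $b=0$. Hence $\theta\sim_{\Tf} (x=0)$, and therefore $a\in\theta(M)\subseteq\s(M)$ by the definition of $\s$ as the sum of all such $\theta(M)$.

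For $\s(M)\subseteq\T(M)$: since $\T(M)$ is a submodule, it suffices to show $\theta(M)\subseteq\T(M)$ whenever $\theta\sim_{\Tf}(x=0)$. Because $\T(M)$ is by definition (via \cite[Cor.\ to Prop.12]{Hat}) the largest torsion submodule, it is enough to check that the submodule $\theta(M)$ is itself torsion, i.e. $\Hom(\theta(M),F)=0$ for every $F\in\Tf$. Let $h\colon\theta(M)\to F$ be a homomorphism and $a\in\theta(M)$; then $h(a)\in\theta(F)$ by preservation of pp formulas under $h$ (restricting $\theta$ to the submodule generated by $a$ and mapping in), but $\theta(F)=0$ since $\theta$ is $\Tf$-equivalent to $x=0$, so $h(a)=0$. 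Thus $h=0$, $\theta(M)$ is torsion, and $\theta(M)\subseteq\T(M)$. Summing over all relevant $\theta$ gives $\s(M)\subseteq\T(M)$.

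The only mild subtlety — and the ``hard part,'' such as it is — is making sure the pp-formula arguments are applied correctly when $a$ lies in a submodule rather than all of $M$: one must either argue directly that $a\in\theta(M)$ implies $a$ lies in $\theta$ of any module containing it and mapping homomorphically, or invoke the standard fact that pp formulas are preserved by all module homomorphisms (not just embeddings), which is what legitimizes both the functoriality of $\s$ and the torsion computation for $\theta(M)$. No real calculation is needed; the proof is essentially a two-line unwinding of the definitions of torsion-freeness, torsion, and $\s$.
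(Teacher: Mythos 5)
Your verification of $\tor(M)\subseteq\s(M)$ is correct and is essentially the paper's argument (for regular $s$ one has $\rann(s)=0$, so $sx=0$ is $\Tf$-equivalent to $x=0$). But there are two genuine gaps elsewhere. First, you never check that $\s(M)$ is an $R$-submodule of $M$. Over a noncommutative ring a pp-definable subgroup $\theta(M)$ of a left module is only a subgroup (an endomorphism-invariant one), not an $R$-submodule --- for instance $\ann_M(s)=\{m:sm=0\}$ need not be closed under left multiplication by $r$, since $srm$ need not vanish --- so the sum $\s(M)=\sum_{\theta\sim_{\Tf}(x=0)}\theta(M)$ is a priori only a subgroup. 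This is precisely where the paper's proof spends its effort: the set of pp formulas $\Tf$-equivalent to $x=0$ is closed under sums of pp formulas and, for each $r\in R$, under $\theta\mapsto\exists y\,(x=ry\wedge\theta(y))$, which defines $r\theta(M)$; only after that is $\s(M)$ a submodule and the phrase ``$\s$ is a preradical'' meaningful. Your claim that functoriality is ``immediate'' skips this, and your later references to ``the submodule $\theta(M)$'' and to $\Hom(\theta(M),F)$ presuppose it.

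Second, the step ``$h(a)\in\theta(F)$ by preservation of pp formulas under $h$'' for $h\colon\theta(M)\to F$ does not hold: a pp formula $\theta=\exists\bar y\,\alpha(x,\bar y)$ is preserved by a homomorphism only when the domain contains witnesses $\bar b$ with $\alpha(a,\bar b)$, and those witnesses lie in $M$ but in general not in $\theta(M)$; nor does your parenthetical fix help, since $a$ need not satisfy $\theta$ in the submodule it generates. So your argument that $\theta(M)$ is torsion does not go through as written. The paper takes a different route for this inclusion: having first established that $\s$ is a preradical that vanishes on every torsion-free module, it concludes that $\s(M)$ is a torsion submodule of $M$ and hence contained in the largest one, $\T(M)$; the homomorphisms exploited there are defined on all of $M$ (where the witnesses are available), via $g(\s(M))\subseteq\s(F)=0$ for every $g\colon M\to F$ with $F\in\Tf$, rather than on the subgroup $\theta(M)$ alone.
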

\begin{proof} 
Let $\Phi$ be the set of 1-place pp formulas that are $\Tf$-equivalent to $x=0$. This set is closed under addition of pp formulas, so also $\s(M)$ is closed under addition. To see that it is closed under scalar multiplication, let   $r\in R$. For every pp formula $\theta\in\Phi$, clearly also the pp formula that defines $r\theta(M)$ (in \emph{every} module $M$), i.e., the formula $\exists y(x=ry\wedge \theta(y))$, is in $\Phi$. Thus $\s(M)$ is closed under $r$, and, as $r$ was arbitrary, a submodule of $M$. That $\s$ is a preradical now follows from the fact that pp formulas are preserved under homomorphisms.

To see that $\s(M)\subseteq\T(M)$, simply  note that $\s$ is zero in torsion free modules (by definition), so $\s(M)$ is a torsion submodule of $M$ and must therefore be contained in the largest one, $\T(M)$.

It remains to verify that $\tor(M)\subseteq \s(M)$. But, as is easily seen, a torsion-free module (in our sense) has no $\Sreg$-torsion elements, whence $\tor(M)$ is a subsum of $\s(M)$. \end{proof}

\begin{prop}\label{s=T} $\s(A)=\T(A)$, for every countably generated $\Tf$-atomic module $A$.
\end{prop}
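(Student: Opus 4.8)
The plan is to exploit the free realization machinery of Section \ref{F-at}. One inclusion, $\s(A)\subseteq\T(A)$, is already provided by the preceding lemma (indeed it holds for every module), so the whole content is the reverse inclusion $\T(A)\subseteq\s(A)$.

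Fix $a\in\T(A)$. Applying Proposition \ref{freereal} to the $1$-tuple $(a)$ in the countably generated $\Tf$-atomic module $A$ yields a unary pp formula $\phi=\phi(x)$ that $(F,a)$ freely realizes for $\Tf$; moreover, as $\phi$ is built as a $\Tf$-generator of the pp type of $a$ in $A$, we have $a\in\phi(A)$. The key claim is that $\phi$ is $\Tf$-equivalent to $x=0$. Since $(x=0)\leq\phi$ always holds (every pp subgroup contains $0$), this amounts to showing $\phi(F)=0$ for every torsion-free $F$. Suppose not: pick $F\in\Tf$ and $c\in\phi(F)$ with $c\neq 0$. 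By the free realization property there is a homomorphism $f\colon A\to F$ with $f(a)=c$. But the restriction of $f$ to $\T(A)$ is a homomorphism from the torsion module $\T(A)$ into the torsion-free module $F$, hence is zero by the very definition of torsion; thus $c=f(a)=0$, a contradiction. Therefore $\phi$ is $\Tf$-equivalent to $x=0$, and hence $a\in\phi(A)\subseteq\s(A)$, as desired.

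I do not expect a genuine obstacle here. The only points to keep straight are that the free realization of a single element may be taken to be a $\Tf$-generator of that element's pp type (so that $a$ actually lies in $\phi(A)$), and that a torsion-free module receives no nonzero homomorphism from a torsion module. Countable generation of $A$ is used essentially, as it is precisely what licenses the appeal to Proposition \ref{freereal}.
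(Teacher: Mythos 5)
Your argument is correct and is essentially identical to the paper's own proof: both take a $\Tf$-generator $\theta$ of the pp type of $a\in\T(A)$, use Proposition \ref{freereal} to produce a map $(A,a)\to(F,c)$ for any $c\in\theta(F)$ with $F$ torsion-free, and conclude $c=0$ because the map kills the torsion submodule $\T(A)$. The two points you flag (that the freely realized formula generates the type, hence $a\in\theta(A)$, and that $\Hom(\T(A),F)=0$ by the definition of torsion) are exactly the ones the paper relies on.
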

\begin{proof} To prove the inclusion from right to left, let the pp type of $a\in \T(A)$ be $\Tf$-generated by $\theta$. We claim $\theta\sim_{\Tf} (x=0)$ (and hence $a\in \theta(A)\subseteq\s(A)$, as desired).

To this end, let $F$ be any torsion-free module and $c\in \theta(F)$. We need to show $c=0$. By Proposition \ref{freereal}, there's a map $(A, a)\to (F, c)$. Its restriction to the torsion module $\T(A)$ must be $0$, hence $c=f(a)=0$, as claimed.
\end{proof} 

\begin{rem}
 It is obvious that, mutatis mutandis, the same holds true for any torsion theory $(\cal T, \cal F)$  with radical $\T$.
\emph{
Define the elementary torsion preradical via $\s_{\cal F}(M)=\sum_{\sigma\sim_{\cal F} (x=0)} \sigma(M)$. Then $\s_{\cal F}$  is a subpreradical of the radical $\T$ such that $\s_{\cal F}(M)=\T(M)$ for every countably generated $\cal F$-atomic module $M$.
}
\end{rem}

In order to extend this to $\Tf$-atomic modules of arbitrary size, we assume the torsion theory to be hereditary. 

\begin{cor}\label{s=T'} 
 If  the torsion theory  $(\To, \Tf)$ is hereditary (i.e., $\To$ is), then $\s(M)=\T(M)$, for every  $\Tf$-atomic $R$-module $M$.
\end{cor}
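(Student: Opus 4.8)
The goal is to remove the "countably generated" hypothesis from Proposition \ref{s=T}, now assuming in addition that $\To$ is closed under submodules. The strategy is to reduce the general case to the countably generated case via Proposition \ref{K-RG} (with $\cal F=\Tf$), which tells us that a $\Tf$-atomic module $M$ is covered by countably generated $\Tf$-pure submodules that are themselves $\Tf$-atomic. The inclusion $\s(M)\subseteq\T(M)$ is already known from the lemma preceding Proposition \ref{s=T} and holds over any ring, so the real content is the reverse inclusion $\T(M)\subseteq\s(M)$.

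\textbf{Key steps.} First I would fix $a\in\T(M)$ and, using Proposition \ref{K-RG}, choose a countably generated $\Tf$-pure submodule $N\subseteq M$ with $a\in N$ and $N$ again $\Tf$-atomic. Second, I would argue that $a\in\T(N)$: the cyclic submodule $Ra\subseteq\T(M)$ is torsion, and since the torsion theory is hereditary, $Ra$ is torsion; as $Ra\subseteq N$, the element $a$ lies in the largest torsion submodule $\T(N)$ of $N$. (It is precisely here that heredity of $\To$ is used, and it is the only place — this is why the hypothesis appears.) Third, apply Proposition \ref{s=T} to the countably generated $\Tf$-atomic module $N$ to get $a\in\T(N)=\s(N)$, so $a\in\sum_{\theta\sim_{\Tf}(x=0)}\theta(N)$. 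Finally, I would transfer this back to $M$: write $a=\sum_{j}b_j$ with $b_j\in\theta_j(N)$ and each $\theta_j\sim_{\Tf}(x=0)$; since $\theta_j(N)\subseteq\theta_j(M)$ trivially, each $b_j\in\theta_j(M)$, hence $a\in\sum_j\theta_j(M)\subseteq\s(M)$. This gives $\T(M)\subseteq\s(M)$, and combined with the reverse inclusion, $\s(M)=\T(M)$.

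\textbf{Main obstacle.} There is no serious obstacle; the argument is a routine "$\aleph_1$-covering" reduction once Proposition \ref{K-RG} and Proposition \ref{s=T} are in hand. The one point that must be handled carefully is the second step: one needs that a single element $a$ of $\T(M)$ already sits inside $\T(N)$ for the chosen countably generated piece $N$, and this genuinely requires heredity of the torsion class (otherwise $Ra$, though a submodule of a torsion module, need not be torsion, and $a$ could fail to be in $\T(N)$). A minor bookkeeping subtlety is that the sum expressing $a\in\s(N)$ is finite for each fixed $a$ (a single element lies in a finite subsum), so the transfer to $M$ involves only finitely many formulas $\theta_j$ and no convergence issues arise. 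Everything else is immediate from preservation of pp formulas under the inclusion $N\hookrightarrow M$.
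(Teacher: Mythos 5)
Your proposal is correct and follows essentially the same route as the paper: fix $a\in\T(M)$, use Proposition \ref{K-RG} to place it in a countably generated $\Tf$-atomic submodule, use heredity to see $a$ lies in the torsion part of that submodule (the paper phrases this as $\T(A)=\T(M)\cap A$, you argue via the cyclic submodule $Ra$ --- same content), then apply Proposition \ref{s=T} and the monotonicity of pp-definable subgroups to conclude $a\in\s(M)$. No gaps.
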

\begin{proof} Let $a\in \T(M)$. Using Proposition \ref{K-RG}, choose a countably generated   $\Tf$-atomic submodule $A\subseteq M$ containing $a$.  By heredity, $\T(A)= \T(M)\cap A$ \cite[Prop.1.7 and 3.1]{S}. So $a\in \T(A)$, hence $a\in\s(A)\subseteq \s(M)$ by the proposition.
\end{proof} 

\begin{rem}\label{hered}
It is well known that a torsion theory $(\cal T, \cal F)$ is hereditary if and only if it can be cogenerated by an injective torsion-free module (see \cite[Prop.15]{Hat} or \cite[Prop.VI.3.7]{S})  if and only if $\cal F$ is closed under injective envelopes \cite[Prop.VI.3.2]{S}---or, in plain terms,  every submodule of any torsion module is torsion.
\end{rem}

The next lemma is the key fact and the reason for the introduction of the elementary torsion preradical.

\begin{lem}\label{tf-factor}
 Let $M$ be a $\Tf$-atomic module with $\s(M)=\T(M)$ and  such that this submodule is pure in $M$.  Then $M/\T(M)$ is  $\Tf$-atomic as well.
\end{lem}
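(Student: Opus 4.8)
The plan is to show that $M/\T(M)$ is $\Tf$-atomic by producing, for each tuple $\bar a + \T(M)$, a $\Tf$-generator of its pp type. The natural candidate is the image of a $\Tf$-generator $\phi$ of the pp type of $\bar a$ in $M$, so the first step is to fix such a $\phi$ and let $\bar\phi$ denote the pp formula satisfied by $\bar a + \T(M)$ in the quotient; since pp formulas are preserved by the projection $\pi\colon M\to M/\T(M)$, the formula $\phi$ is indeed in the pp type of $\bar a + \T(M)$. What must be checked is that $\phi\leq_{\Tf}\psi$ whenever $\psi$ lies in the pp type of $\bar a + \T(M)$ in $M/\T(M)$.

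So suppose $M/\T(M)\models\psi(\bar a + \T(M))$. The point of purity of $\T(M)$ in $M$ is that pp formulas lift: there is a tuple $\bar t$ in $\T(M)$ with $M\models\psi(\bar a - \bar t)$ — more precisely, one uses that $\pi$ is a pure epimorphism, so a solution of $\psi$ in the quotient over $\bar a + \T(M)$ pulls back to a solution in $M$ over some preimage of $\bar a + \T(M)$, i.e. over $\bar a - \bar t$ for some $\bar t\in\T(M)^{l(\bar x)}$. Now I would use that $\T(M)=\s(M)$: each entry of $\bar t$ lies in $\theta(M)$ for some pp formula $\theta\sim_{\Tf}(x=0)$, and by taking a common such $\theta$ (the set of formulas $\Tf$-equivalent to $x=0$ is closed under the relevant operations, as established in the lemma preceding Proposition \ref{s=T}) we get a pp formula $\Theta(\bar x)$ with $\Theta\sim_{\Tf}(\bar x = \bar 0)$ and $M\models\Theta(\bar t)$. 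Then $\bar a$ satisfies the pp formula $\chi(\bar x):=\exists\bar z\,(\psi(\bar x - \bar z)\wedge\Theta(\bar z))$ in $M$, so $\chi$ is in the pp type of $\bar a$ in $M$, hence $\phi\leq_{\Tf}\chi$. But over any torsion-free module $\Theta$ forces $\bar z=\bar 0$, so $\chi\sim_{\Tf}\psi$; combining, $\phi\leq_{\Tf}\psi$, which is exactly what was needed. Thus $\phi$ is a $\Tf$-generator of the pp type of $\bar a + \T(M)$, and $M/\T(M)$ is $\Tf$-atomic.

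The main obstacle I expect is the careful handling of the lifting step for \emph{tuples} rather than single elements, and the bookkeeping needed to pass from "each coordinate of $\bar t$ lies in some $\theta_i(M)$ with $\theta_i\sim_{\Tf}(x=0)$" to a single many-place pp formula $\Theta$ with $\Theta\sim_{\Tf}(\bar x=\bar 0)$ — here one should cite the closure properties of the family $\Phi$ of pp formulas $\Tf$-equivalent to $x=0$ and the standard fact that a finite conjunction (suitably relabelled across disjoint variable blocks) of such formulas is again $\Tf$-equivalent to $\bar x=\bar 0$. A secondary point to get right is the precise sense in which $\pi$ is a pure epimorphism: purity of $\T(M)$ in $M$ means exactly that for every pp formula $\psi$, $\psi(M/\T(M)) = (\psi(M)+\T(M))/\T(M)$, which is the statement used above; no deeper input is required.
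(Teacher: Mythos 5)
Your proposal is correct and follows essentially the same route as the paper's own proof: take a $\Tf$-generator of the pp type of the representing tuple in $M$, lift a formula from the quotient via purity of $\T(M)$, use $\s(M)=\T(M)$ to absorb the torsion correction into a pp formula $\Tf$-equivalent to $\bar x=\bar 0$, and conclude by $\Tf$-equivalence of $\psi$ with the combined formula (your $\chi$ is just the paper's sum formula $\psi+\sigma$ in disguise). The bookkeeping points you flag are exactly the ones the paper handles, and they go through as you describe.
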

\begin{proof}
 Let $\bar{b}+\T(M)$ an arbitrary tuple in  $M/\T(M)$ and $\theta$ a pp formula that $\Tf$-generates the pp type of $\bar{b}$ in $M$. We claim that this formula  $\Tf$-generates also the pp type of $\bar{b}+\T(M)$  in  $M/\T(M)$ (i.e., independently of the choice of the representative $\bar{b}$).
 So let  $\phi$ be a pp formula that $\bar{b}+\T(M)$ satisfies. It remains to  verify $\theta\leq_{\Tf}\phi$. 
 
 Using purity choose a preimage $\bar{b}'$ in $\phi(M)$, i.e., such that $\bar{b}-\bar{b}'$ is in $\T(M)$. More precisely, if $\bar{b}$ has entries $b_i$ ($i<n$), then $\bar{b}'\in\phi(M)$ has entries $b_i'$  such that $b_i-b_i'\in\T(M)$ for $i<n$.  By hypothesis, each $b_i-b_i'$ satisfies some pp formula $\sigma_i\sim_{\Tf} (x=0)$, hence $\bar{b}-\bar{b}'$ satisfies an $n$-place pp formula $\sigma\sim_{\Tf} (\bar{x}=\bar{0})$ in $M$ (namely, their conjunction). Then $\bar{b} \in \bar{b}' +\sigma(M)\subseteq\phi(M)+\sigma(M)$, hence $\phi+\sigma$ is in the type of  $\bar{b}$ in $M$. Hence $\theta\leq_{\Tf}\phi+\sigma$ and thus  $\theta\leq_{\Tf}\phi$, for  $\sigma\sim_{\Tf} (\bar{x}=\bar{0})$.
\end{proof}

\section{RD-rings}
 An \emph{RD-ring} is a ring over which RD-purity \emph{is} purity. This is, as Menal and V\'amos noticed, a two-sided notion. The class of all RD-rings is not a small one. It contains all commutative Pr\"ufer domains. Moreover, Warfield showed that all commutative rings whose localizations at maximal ideals are  valuation rings (call these \emph{commutative Pr\"ufer rings}) are RD, and he proved the converse for commutative rings. Further,   all von Neumann regular rings are RD, and so are all serial rings and all Dedekind prime rings, in particular, the first Weyl algebra $A_1$ over a field of characteristic $0$ is an RD-domain. See \cite{PPR} or  \cite[Sect.2.4.2]{P2} for this and more on RD-rings.

As mentioned before,  Hattori noticed, and this is easy to prove directly from the definitions, that the torsion-free modules are, so to speak, the RD-flat modules and the divisible modules are the absolutely RD-pure ones. Hence, over an RD-ring, the classes $\Tf$ and $\flat$ coincide, as well as do $\Div$ and $\sharp$, \cite[Prop.3]{Hat} (on whichever side we want, as RD is  two-sided). Thus, our initial duality, over  P-coherent rings, of $\Div$ and $\Tf$  now, over RD-rings, becomes the usual duality of  $\sharp$ and $\flat$, for which we no longer need P-coherence, cf.\ Fact \ref{IvoH}. 

Recall that absolutely pure right modules over a right noetherian ring are injective. Hence, over a right noetherian RD-ring,  divisible right modules are injective.

\begin{rem}
 In view of  Proposition \ref{s=T},  all results in this section hold for \emph{countably generated} $M$ even when the torsion theory is not hereditary.
\end{rem}

\subsection{Semi-hereditary RD-rings}
Even though we no longer need P-coherence for the duality of  $\sharp$ and $\flat$, in order to have $\T(M)$ always pure in $M$, which seems to be an essential part of our proof, we are led to assume  the ring to be   right PP so that Hattori's torsion theory is available (for left modules), for then $M/T(M)$ is torsion-free and therefore  $\T(M)$ must  be RD-pure (hence pure) in $M$. Note that a right PP-ring which is also RD is automatically right semi-hereditary \cite[Prop.2.21]{PPR}. So we work for the remainder over a right semi-hereditary RD-ring. We further assume the torsion theory to be hereditary, so that   Corollary \ref{s=T'}  applies.
Invoking Lemma \ref{tf-factor}  we now obtain
 
\begin{lem}
 Let $R$ be a right semi-hereditary RD-ring with $(\To, \Tf)$  hereditary.
 
Then $M/\T(M)$ is 
  $\Tf$-atomic for \emph{every}  $\Tf$-atomic module $M$.\qed
 \end{lem}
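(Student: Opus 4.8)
The plan is to combine the two immediately preceding results. We are given a right semi-hereditary RD-ring $R$ over which the torsion theory $(\To, \Tf)$ is hereditary, and a $\Tf$-atomic module $M$; we want to conclude that $M/\T(M)$ is again $\Tf$-atomic. The natural route is to check that the hypotheses of Lemma \ref{tf-factor} are all met, since that lemma gives exactly the desired conclusion.

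First I would verify the equality $\s(M)=\T(M)$. Since we have assumed the torsion theory $(\To,\Tf)$ to be hereditary, Corollary \ref{s=T'} applies directly and yields $\s(M)=\T(M)$ for every $\Tf$-atomic module $M$, in particular for the one at hand. Second, I would check that $\T(M)$ is pure in $M$. Here is where the ring-theoretic hypotheses enter, exactly as spelled out in the discussion preceding the lemma: because $R$ is right semi-hereditary, by Hattori's work $\Tf$ is the torsion-free class of a torsion theory, so $M/\T(M)$ is torsion-free, whence $\T(M)$ is RD-pure in $M$ (every short exact sequence ending in a torsion-free module is RD-pure, by \cite[Prop.3]{Hat}); and because $R$ is an RD-ring, RD-purity coincides with purity, so $\T(M)$ is in fact pure in $M$. (Strictly, a right PP ring that is RD is right semi-hereditary by \cite[Prop.2.21]{PPR}, and conversely the standing assumptions of this subsection furnish the right PP property needed to invoke Hattori's torsion theory; the excerpt has already arranged these implications in the paragraph opening the subsection.)

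With $\s(M)=\T(M)$ established and this submodule pure in $M$, Lemma \ref{tf-factor} applies verbatim and concludes that $M/\T(M)$ is $\Tf$-atomic. That is the entire argument; no further computation is needed, as both ingredients have been isolated precisely for this purpose.

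There is no real obstacle here — the lemma is essentially a bookkeeping step assembling Corollary \ref{s=T'} and Lemma \ref{tf-factor} under the standing hypotheses of the subsection. If anything, the only point requiring care is making sure that the chain of ring-theoretic implications (right semi-hereditary RD-ring $\Rightarrow$ right PP with Hattori's torsion theory available for left modules $\Rightarrow$ $M/\T(M)$ torsion-free $\Rightarrow$ $\T(M)$ RD-pure $\Rightarrow$ $\T(M)$ pure) is invoked correctly; but all of these links have already been recorded in the text immediately above, so this amounts to a citation rather than a proof.
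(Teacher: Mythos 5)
Your proposal is correct and matches the paper exactly: the paper states this lemma with no separate proof (it ends in \qed), precisely because it is, as you say, the assembly of Corollary \ref{s=T'} (giving $\s(M)=\T(M)$ under heredity of the torsion theory) and Lemma \ref{tf-factor}, with purity of $\T(M)$ supplied by the chain right semi-hereditary $\Rightarrow$ right PP $\Rightarrow$ $M/\T(M)$ torsion-free $\Rightarrow$ $\T(M)$ RD-pure $\Rightarrow$ pure, all recorded in the paragraph opening the subsection.
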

 
\begin{thm}\label{thmRD}
Suppose $R$ is an  RD-ring  with $(\To, \Tf)$  hereditary and $M$ is  a left $R$-module with  $F:= M/\T(M)$ its largest torsion-free factor (which is flat).
\begin{enumerate}[\upshape (1)]
\item If $R$ is  right semi-hereditary,
then   $M$ is  $\sharp$-Mittag-Leffler if and only if  $F$ satisfies the equivalent conditions  {\rm (A) -- (D)} (from Conditions \ref{cond}).

\item   If $R$ is  left countably hereditary and right  semi-hereditary,  
then   $M$ is  $\sharp$-Mittag-Leffler if and only if  $F$ satisfies the equivalent conditions  {\rm (A) -- (E)}.
\end{enumerate}

\end{thm}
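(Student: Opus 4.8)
The plan is essentially to harvest: the substantive work lies in the earlier sections, and the proof of Theorem \ref{thmRD} consists of feeding the hypotheses of each part into the right earlier statement while keeping track of the RD-identifications $\Div=\sharp$ and $\Tf=\flat$. Fix the hypotheses of the part being proved. Since a right semi-hereditary ring is right PP, over a right semi-hereditary RD-ring Hattori's torsion theory $(\To,\Tf)$ is a genuine torsion theory; hence $F=M/\T(M)$ is torsion-free and therefore (the ring being RD) flat, and, $M/\T(M)$ being torsion-free, $\T(M)$ is RD-pure, hence pure, in $M$. These are the ambient facts used below, and both parts supply ``$R$ right semi-hereditary,'' so they are available throughout.

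For the direction ``$F$ satisfies {\rm (A) -- (D)} (resp.\ {\rm (A) -- (E)}) $\Rightarrow$ $M$ is $\sharp$-Mittag-Leffler'': condition (A) says in particular that $F$ is $\sharp$-Mittag-Leffler, i.e.\ $\Div$-Mittag-Leffler over the RD-ring. Since $\T(M)$ is pure in $M$, Corollary \ref{EasyDirect} yields that $M$ is $\Div$-Mittag-Leffler $=\sharp$-Mittag-Leffler. (Unpacked: $\T(M)$ is torsion, hence $\Div$-Mittag-Leffler by Proposition \ref{torsionisML}; $M$ is a pure extension of $\T(M)$ by $F$; apply Fact \ref{MThm}(3).)

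For the converse, suppose $M$ is $\sharp$-Mittag-Leffler, which over the RD-ring reads $M$ is $\flat$-atomic $=\Tf$-atomic (Fact \ref{MThm}(1)). In both parts $R$ is a right semi-hereditary RD-ring with $(\To,\Tf)$ hereditary, so the lemma just above applies and gives that $F=M/\T(M)$ is $\Tf$-atomic. Thus $F$ is $\flat$-atomic $=\sharp$-Mittag-Leffler, and $F$ is flat, being torsion-free over an RD-ring; so $F$ satisfies condition (A). It remains to quote Fact \ref{*}: under the hypotheses of (1) the ring is (left or) right semi-hereditary, so by Fact \ref{*}(1) conditions {\rm (A) -- (D)} are equivalent and $F$ satisfies all of them; under the hypotheses of (2) the ring is moreover left countably hereditary, so by Fact \ref{*}(2) conditions {\rm (A) -- (E)} are equivalent and $F$ satisfies all of them.

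The main obstacle is not in this assembly but in the two results it leans on. The lemma that $M/\T(M)$ is $\Tf$-atomic whenever $M$ is rests on Lemma \ref{tf-factor}, which needs both $\s(M)=\T(M)$ for all $\Tf$-atomic $M$ (Proposition \ref{s=T} together with the hereditary hypothesis, via Corollary \ref{s=T'}) and the purity of $\T(M)$ in order to lift a $\Tf$-generating pp formula from $M$ to the quotient; and the equivalence of {\rm (A) -- (E)}, in particular the step from flat $\sharp$-Mittag-Leffler to flat Mittag-Leffler, is Theorem \ref{alreadyML}. The only thing to check carefully in the assembly itself is that each implication uses exactly the hypotheses listed in its part: right semi-heredity (with the hereditary torsion theory) for the quotient step, plus the extra left countable heredity solely to upgrade {\rm (A) -- (D)} to {\rm (A) -- (E)}.
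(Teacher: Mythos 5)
Your proposal is correct and follows essentially the same route as the paper: the easy direction via Corollary \ref{EasyDirect} (using purity of $\T(M)$ from the RD property and the PP/semi-hereditary hypothesis), and the converse via the unnumbered lemma preceding the theorem (that $\Tf$-atomicity passes to $M/\T(M)$) together with Fact \ref{*} to upgrade condition (A) to the full list. Your unpacking of which hypotheses feed which step matches the paper's (terser) argument exactly.
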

\begin{proof} First of all, by the introductory discussion, the factor module $F$ is, under the hypotheses, indeed flat. 
 
 Since $\sharp=\Div$, the direction from right to left follows from  Corollary \ref{EasyDirect}  (over any ring and for any  torsion theory---so long as $\T(M)$ is pure in $M$).\footnote{Nothing is gained from this generality in the commutative domain case, see the concluding remarks.}
For the converse apply Fact \ref{*}.
\end{proof}

\begin{rem} As von Neumann regular rings are semi-hereditary and RD, we see that all modules are torsion-free and divisible. So `$\Div$-Mittag-Leffler' and `$\Tf$-Mittag-Leffler' may be added to the equivalent concepts in  Fact \ref{reg} (and the rest of the theory is redundant, since always $\T(M)=0$).
\end{rem}

\begin{cor}
 Suppose $R$ is a Dedekind prime ring  with $(\To, \Tf)$  hereditary and $M$ is a left  $R$-module with  $F:= M/\T(M)$ its largest torsion-free factor (which is flat).

Then  $M$ is  $\sharp$-Mittag-Leffler if and only if   $F$ satisfies the equivalent conditions {\rm (A) -- (E)} from Conditions \ref{cond}.
\end{cor}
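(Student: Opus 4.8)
The plan is to read the statement straight off Theorem \ref{thmRD}(2), after checking that a Dedekind prime ring meets every one of its hypotheses. So the first step is purely ring-theoretic bookkeeping. By definition a Dedekind prime ring is two-sided hereditary and noetherian; in particular it is left hereditary, and hence \emph{a fortiori} left countably hereditary (every countably generated---indeed every---left ideal is projective, so $_RR$ is $\aleph_1$-projective, cf.\ Proposition \ref{c-hered}), and it is right hereditary, hence right semi-hereditary (finitely generated right ideals are projective). The second step is to invoke the fact, recorded in the opening discussion of the section on RD-rings, that every Dedekind prime ring is an RD-ring (see \cite{PPR} or \cite[Sect.2.4.2]{P2}); in particular $\sharp=\Div$ and $\Tf=\flat$ on either side.

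With these two observations in hand, $R$ is an RD-ring that is simultaneously left countably hereditary and right semi-hereditary, and by hypothesis the torsion theory $(\To,\Tf)$ is hereditary. These are exactly the assumptions of Theorem \ref{thmRD}(2). Applying that theorem to the left $R$-module $M$, with $F:=M/\T(M)$ (which is flat, as noted there because $R$ is right semi-hereditary RD and $\T(M)$ is therefore pure in $M$), we conclude that $M$ is $\sharp$-Mittag-Leffler if and only if $F$ satisfies the equivalent conditions {\rm (A) -- (E)} of Conditions \ref{cond}. That is the assertion.

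There is no real obstacle here: the content is entirely in Theorem \ref{thmRD}(2), Fact \ref{*}, and Theorem \ref{alreadyML}, and the only thing to be careful about is the trivial check that ``Dedekind prime'' delivers two-sided heredity and the RD property. One may add, as a sanity check, that since $R$ is moreover left noetherian, Corollary \ref{noeth} shows that the ``left countably hereditary'' hypothesis here is no weaker than plain left heredity, and that the first Weyl algebra $A_1(k)$ over a field $k$ of characteristic $0$, being a Dedekind domain and hence a Dedekind prime ring, is subsumed as a special case.
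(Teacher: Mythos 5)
Your proposal is correct and follows the paper's own route exactly: the paper likewise notes that a Dedekind prime ring is RD (citing \cite[Cor.2.11]{PPR}) and by definition two-sided hereditary, and then reads the conclusion off Theorem \ref{thmRD}(2). Your extra bookkeeping (hereditary $\Rightarrow$ countably hereditary on the left, hereditary $\Rightarrow$ semi-hereditary on the right) is just an expanded version of the same verification.
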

\begin{proof}
  A Dedekind prime ring is RD \cite[Cor.2.11]{PPR} (whence $F$ is flat) and, by definition, two-sided hereditary. (Is the heredity hypothesis on the torsion theory also two-sided?---it is for domains, see Lemma \ref{T=t} below.)
\end{proof}

\subsection{RD-domains}
Examples are, as mentioned,  all commutative Pr\"ufer domains (and these are the only commutative RD-domains), but there are  non-commutative examples, see Corollary \ref{DedDom} and Remark \ref{3}(1) below and \cite[Sect.5]{PPR}. Since  $\mathbb{Z}$ is an RD-domain, everything applies to abelian groups.

\begin{rem}\label{thmRDdom} RD-domains are (left and right) semi-hereditary and coherent \cite[Cor.5.1]{PPR}, hence
Theorem \ref{thmRD}(1) applies to any RD-domain with   hereditary torsion theory $(\To, \Tf)$. Part (2) applies if  $R$ is, in addition, left countably hereditary.
\end{rem}

\subsection{Ore domains and hereditary torsion theories}
In view of the preceding remark, we now consider RD-domains over which the torsion theory  $(\To, \Tf)$ is hereditary.

  \cite[Prop.18]{Hat} noticed that an arbitrary domain  is left Ore  if and only $\T$ is the usual $\Sreg$-torsion radical $\tor$ (for left modules).
  We can say slightly more for RD-domains.
 
\begin{lem}\label{T=t} The following are equivalent for any RD-domain $R$.
 \begin{enumerate}  [\upshape (i)]
 \item  $\T=\tor$.
 \item  $R$ is left Ore.
 \item  $(\To, \Tf)$ is hereditary.
 \item The same conditions on the right.
 \end{enumerate}
\end{lem}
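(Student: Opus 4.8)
The plan is to prove the cycle of implications (i) $\Rightarrow$ (ii) $\Rightarrow$ (iii) $\Rightarrow$ (i), and then obtain (iv) by left-right symmetry of the RD hypothesis. The implication (ii) $\Rightarrow$ (i) is already available: Hattori \cite[Prop.18]{Hat} shows that, for an arbitrary domain, $R$ being left Ore is equivalent to $\T=\tor$ on left modules, so (i) $\Leftrightarrow$ (ii) requires no new work. The implication (iii) $\Rightarrow$ (i) is also essentially for free: if $(\To,\Tf)$ is hereditary then, since $\tor$ is always a subpreradical of $\T$ (as noted in the lemma preceding Proposition \ref{s=T}) and $\tor(M)$ is visibly contained in a hereditary-closure argument, one checks that $\tor$ itself is hereditary and cogenerates a torsion theory with the same torsion-free class $\Tf$; comparing torsion classes forces $\tor=\T$. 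Concretely: heredity of $(\To,\Tf)$ means $\To$ is closed under submodules, so for any $M$ and any $a\in\T(M)$ the cyclic submodule $Ra$ is torsion; but over a domain a cyclic torsion module $R/I$ with $I\neq 0$ contains a regular element in $I$, so $a$ is $\Sreg$-torsion, giving $\T(M)\subseteq\tor(M)$, hence equality, which is (i).

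The remaining and only substantive step is (i) $\Rightarrow$ (iii), equivalently (ii) $\Rightarrow$ (iii): over a left Ore RD-domain the classical $\Sreg$-torsion theory $\tor$ is hereditary. Here I would use that $\Sreg$ is a left denominator set (left Ore), so by \cite[Exple.VI.1.2, p.138]{S} the $\Sreg$-torsion radical is already a torsion radical in Stenström's sense, i.e.\ hereditary: a submodule of an $\Sreg$-torsion module is again $\Sreg$-torsion, because killing an element by a regular element is inherited by subobjects. Combined with (i) $\Leftrightarrow$ (ii) this identifies $\To$ with the $\Sreg$-torsion class, which is closed under submodules, establishing (iii). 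So the logical skeleton is: (ii) $\Leftrightarrow$ (i) by Hattori; (ii) $\Rightarrow$ (iii) by the Ore/denominator-set fact about $\tor$; (iii) $\Rightarrow$ (i) by the cyclic-module argument above; and (iv) follows because ``RD-domain'' and ``$\T=\tor$'' are each symmetric only after invoking that RD is two-sided while Ore need not be—so more care is needed there.

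The main obstacle I anticipate is precisely clause (iv): the equivalences (i)--(iii) are all phrased for \emph{left} modules, and left Ore is genuinely one-sided in general. The point must be that an RD-domain automatically has \emph{both-sided} relationships forced here—presumably because, over an RD-domain, the duality between $\Tf$ (left) and $\Div$ (right) provided by Theorem \ref{divtfD} (RD-domains are coherent by \cite[Cor.5.1]{PPR}), together with Remark \ref{hered}'s characterization of heredity via closure of $\Tf$ under injective hulls, transfers heredity of the left torsion theory to heredity of the right one and vice versa. I would therefore structure the proof of (iv) by showing ``$(\To,\Tf)$ hereditary on the left'' $\Leftrightarrow$ ``$\Tf$ closed under injective envelopes on the left'' $\Leftrightarrow$ (via elementary/character duality and $\D\Tf=\Div$) the corresponding statement on the right, rather than trying to relate ``left Ore'' to ``right Ore'' directly (which would be false without RD). If a cleaner route exists it is probably to note that for an RD-domain being left Ore is in fact equivalent to being right Ore—this may be the content of an available result in \cite{PPR}—in which case (iv) is immediate from the left-hand equivalences applied on the right; I would check \cite[Sect.5]{PPR} for such a statement before writing the duality argument.
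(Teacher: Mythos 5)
Your cycle for (i)--(iii) is sound, but it is organized differently from the paper's, and the difference matters for clause (iv). The paper proves (i) $\Rightarrow$ (iii) (as obvious), then (iii) $\Rightarrow$ (ii) by translating heredity, via \cite[Prop.VI.3.2]{S}, into closure of $\Tf$ under injective envelopes; this produces a non-zero torsion-free injective module, and \cite[Lemma 5.2+Prop.5.3]{PPR} then gives \emph{both} the left and the right Ore property from RD. So the two-sidedness in (iv) is not an afterthought but is delivered by that single step, and your second guess is the right one: the needed fact does sit in \cite[Sect.5]{PPR}, and for RD-domains left Ore is indeed equivalent to right Ore (the paper uses this again later, remarking that ``the Ore property is two-sided in RD-rings''). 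Your substitute implication (iii) $\Rightarrow$ (i) via cyclic submodules is correct and is actually more elementary --- $Ra\cong R/\ann(a)$ cannot be torsion with $\ann(a)=0$ since $R$ is torsion-free over a domain, and any non-zero element of a domain is regular --- and it needs no RD hypothesis at all; but precisely for that reason it stays entirely on the left and cannot by itself yield (iv). Your (ii) $\Rightarrow$ (iii) via the denominator-set fact and your appeal to Hattori for (i) $\Leftrightarrow$ (ii) match what the paper does or quotes. The one piece you leave genuinely unfinished is the duality argument you sketch as a fallback for (iv); you do not need it, and I would drop it in favor of citing \cite[Lemma 5.2+Prop.5.3]{PPR} as the paper does, since the character-dual route (``$\D\Tf=\Div$'' transferring heredity across sides) is not worked out and would require its own verification.
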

\begin{proof}
(i) $\Rightarrow$ (iii) is obvious (and well-known).  

(iii) $\Rightarrow$ (ii).
By \cite[Prop.VI.3.2]{S}, we have (iii) exactly when $\Tf$ is closed under injective envelopes. In that case, there is a non-zero torsion-free and injective left $R$-module, which, under the hypothesis of RD, implies the  left  and right Ore properties by \cite[Lemma 5.2+Prop.5.3]{PPR}. 

(ii) $\Rightarrow$ (i). When (ii) holds (on the left, say), then, for any left $R$-module $M$, the factor  $M/\tor(M)$ is torsion-free in the usual sense \cite[ex.10.19]{L}, hence also in our sense, as those two are the same for domains. Hence $\T(M/\tor(M))=0$. Since $\tor(M)\subseteq\T(M)$, these must be the same.
\end{proof}

Call such rings \emph{RD-Ore domains}. 

\begin{thm}\label{thmRDOre} Let $R$ be an  RD-Ore domain.
 \begin{enumerate}[\upshape (1)]
\item The following are equivalent for any $R$-module $M$.
         \begin{enumerate}  [\upshape (i)]
	\item $M$ is $\sharp$-Mittag-Leffler (= $\Div$-Mittag-Leffler).
	\item $M/\tor(M)$ is $\sharp$-Mittag-Leffler.
	\item $M/\tor(M)$ is Mittag-Leffler.
	\item  $M/\tor(M)$ is $\aleph_1$-$*$-projective.
 	\item $M/\tor(M)$ is $\aleph_0$-$*$-projective.
	\end{enumerate}
 \item   If $R$ is, in addition, left countably hereditary (equivalently, left noetherian),  another equivalent condition is:

\hspace{-1em}{\rm (vi)}  $M/\tor(M)$ is $\aleph_1$-projective.
\end{enumerate}  
\end{thm}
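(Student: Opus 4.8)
The plan is to deduce Theorem~\ref{thmRDOre} from the machinery already assembled, treating part~(1) and part~(2) in turn. For part~(1), first invoke Lemma~\ref{T=t}: since $R$ is an RD-Ore domain, the torsion theory $(\To,\Tf)$ is hereditary and $\T=\tor$, so $M/\tor(M)=M/\T(M)$ and this factor is torsion-free in both the classical and the Hattori sense. By Remark~\ref{thmRDdom}, an RD-domain is (left and right) semi-hereditary and coherent, so Theorem~\ref{thmRD}(1) applies directly: $M$ is $\sharp$-Mittag-Leffler if and only if $F:=M/\tor(M)$ satisfies the equivalent conditions (A)--(D) of Conditions~\ref{cond}. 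Now I unwind what (A)--(D) say about the \emph{flat} module $F$: condition (A) is ``flat and $\sharp$-Mittag-Leffler,'' which for the already-flat $F$ is exactly ``$F$ is $\sharp$-Mittag-Leffler,'' giving (ii); condition (B) is ``flat and Mittag-Leffler,'' i.e. (since $F$ is flat) ``$F$ is Mittag-Leffler,'' giving (iii); condition (C) is ``$F$ is $\aleph_1$-$*$-projective,'' giving (iv); condition (D) is ``$F$ is $\aleph_0$-$*$-projective,'' giving (v). Since Conditions~\ref{cond} (via Fact~\ref{*}(1)) asserts (A)--(D) are equivalent over a left or right semi-hereditary ring, and RD-domains are semi-hereditary, the chain (i)$\Leftrightarrow$(ii)$\Leftrightarrow$(iii)$\Leftrightarrow$(iv)$\Leftrightarrow$(v) follows.

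For part~(2), assume in addition that $R$ is left countably hereditary. First I would record the parenthetical equivalence ``left countably hereditary $\Leftrightarrow$ left noetherian'': an RD-Ore domain is left semi-hereditary and left Ore, hence of finite (in fact uniform dimension one) Goldie dimension, so Corollary~\ref{noeth} gives that left countably hereditary implies left noetherian; conversely a left noetherian ring has all left ideals finitely generated, so countably generated left ideals are finitely presented and, being submodules of a (semi-hereditary) ring, projective—hence $R$ is left countably hereditary. With this hypothesis in force, Theorem~\ref{thmRD}(2) applies (RD-domain is right semi-hereditary, and now also left countably hereditary, with $(\To,\Tf)$ hereditary by Lemma~\ref{T=t}), yielding that $M$ is $\sharp$-Mittag-Leffler iff $F=M/\tor(M)$ satisfies the equivalent conditions (A)--(E). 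The only new condition beyond part~(1) is (E), ``$F$ is $\aleph_1$-projective,'' which is condition (vi). So (i)--(vi) are all equivalent, by Fact~\ref{*}(2).

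I do not anticipate a genuine obstacle here, since essentially everything has been pre-packaged; the one point requiring a little care is the bookkeeping that translates the abstract conditions (A)--(E)—stated for an arbitrary module—into the listed statements (ii)--(vi) about the specific flat module $F$, using repeatedly that $F$ is flat to collapse ``flat and $X$'' to ``$X$.'' The mildly delicate verification is the bracketed ``equivalently, left noetherian'' in part~(2): one must be sure the finite-Goldie-dimension hypothesis of Corollary~\ref{noeth} is met, which it is because a left Ore domain has left uniform dimension one, so $R$ as a left module over itself is uniform, hence certainly of finite Goldie dimension, and Corollary~\ref{noeth} then gives left noetherianness (indeed, for a left Ore domain, left countably hereditary forces $R$ to be a left PID in the Bézout case, but we only need noetherianness here). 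Everything else is a direct citation of Lemma~\ref{T=t}, Remark~\ref{thmRDdom}, Theorem~\ref{thmRD}, and Fact~\ref{*}.
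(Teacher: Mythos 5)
Your proposal is correct and follows essentially the same route as the paper: the paper's own proof is simply ``In view of the above discussion and Lemma \ref{T=t}, both parts follow from Theorem \ref{thmRD},'' with Corollary \ref{noeth} supplying the parenthetical equivalence with left noetherianness. Your additional bookkeeping (unwinding (A)--(E) into (ii)--(vi) using flatness of $F$, and checking the finite Goldie dimension hypothesis via uniform dimension one for Ore domains) just makes explicit what the paper leaves implicit.
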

\begin{proof}
In view of the  above discussion and Lemma \ref{T=t},  both parts  follow from Theorem \ref{thmRD}.  (By Corollary \ref{noeth}, an RD-Ore domain is left countably hereditary if and only if it is left noetherian.)
\end{proof}

\begin{cor}\label{DedDom}
If $M$ is a module over  a Dedekind domain (i.e.,   two-sided  hereditary  and noetherian domain without idempotent   two-sided ideals), then {\rm (i) -- (vi)} above are equivalent.
\end{cor}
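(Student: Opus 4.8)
The plan is to verify that a Dedekind domain $R$ meets all the hypotheses of Theorem \ref{thmRDOre} — that is, that it is an RD-Ore domain which is moreover left countably hereditary — and then simply invoke both parts of that theorem.

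First I would record that a Dedekind domain is, in particular, a Dedekind prime ring (a domain being trivially prime), so by \cite[Cor.2.11]{PPR} it is an RD-ring; being also a domain, it is an RD-domain, and in particular $M/\tor(M)$ is flat. Next, since $R$ is (left and right) noetherian, it satisfies the Ore condition on both sides — a left noetherian domain is left Ore, as recalled around Lemma \ref{T=t} (see \cite[ex.10.19]{L}). Hence $R$ is an RD-Ore domain, so by Lemma \ref{T=t} the torsion theory $(\To,\Tf)$ is hereditary and $\T=\tor$. Theorem \ref{thmRDOre}(1) then applies verbatim and gives the equivalence of conditions (i)--(v).

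It remains to bring in condition (vi). Since $R$ is two-sided hereditary, every left ideal, hence in particular every countably generated left ideal, is projective, so $R$ is left countably hereditary; equivalently, by Corollary \ref{noeth} this is already forced by $R$ being left noetherian. Thus Theorem \ref{thmRDOre}(2) applies as well, and (vi) joins the list. (One may also note that this covers the first Weyl algebra $A_1(k)$ over a field $k$ of characteristic $0$, which is a Dedekind domain, cf.\ Remark \ref{3}(1).)

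The argument is purely a matter of assembling hypotheses, so there is no real obstacle; the only points worth a word are that the RD property of Dedekind prime rings is borrowed from \cite{PPR}, that noetherian domains are automatically Ore, and that heredity of the torsion theory — which for general rings had to be assumed — is here free of charge via Lemma \ref{T=t} precisely because $R$ is simultaneously RD and Ore.
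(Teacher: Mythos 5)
Your proposal is correct and follows essentially the same route as the paper, which simply notes that a Dedekind prime ring is RD \cite[Cor.2.11]{PPR} and Ore \cite[Prop.II.1.7]{S} and then invokes Theorem \ref{thmRDOre} (with left countable heredity for condition (vi) coming for free from the noetherian/hereditary hypotheses). The only cosmetic difference is that you derive the Ore condition from noetherianity of domains rather than citing it directly for Dedekind prime rings; both are standard and equally valid.
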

\begin{proof}
 Any Dedekind prime ring is RD \cite[Cor.2.11]{PPR} and Ore \cite[Prop.II.1.7]{S}.
\end{proof}

\begin{rem}\label{3}
\begin{enumerate}[\upshape (1)]
 \item The corollary applies, in particular, to $\sharp$-Mittag-Leffler (= $\Div$-Mittag-Leffler) modules over the first Weyl algebra $A_1$ over a field of characteristic $0$, since it is a Dedekind domain. See \cite[5.2.11]{MR} for this and other exmples, cf.\ also \cite[Rem.2.13]{PPR} or \cite{P2}.
 \item For torsion-free modules over hereditary RD-Ore domains, the equivalence of (iii) -- (vi) above was proved in \cite[Thm.6.13]{habil}, as well as two more equivalences on $M/\tor(M)$:
 
 (vii) Every finite subset   is contained in a finitely presented pure submodule.
 
 (viii) Every finite rank submodule is finitely generated projective (hence finitely presented).
\end{enumerate}
\end{rem}

 Next we impose the B\'ezout condition onto our RD-domain (which makes it an Ore domain again, see e.g.\ \cite[Prop.II.1.8]{S}). For RD-domains, this is also a two-sided property, see the discussion below.
 
Note that we would gain nothing from relaxing B\'ezout to semi-fir, for even $2$-firs satisfying the Ore condition  are   B\'ezout already, see the discussion before Fact \ref{Alb}. (For RD-domains that are not semi-firs and semi-firs that are not RD, see \cite[Rem.5.6]{PPR}.)

\begin{cor}\label{corBez}
\begin{enumerate}[\upshape (1)]
  \item If $R$ is a B\'ezout domain that is also RD, then {\rm (1)} of the theorem holds true with `projective' everywhere replaced by `free.'  

 \item  If $R$ is a left principal ideal domain that is also RD, then a left module $M$ is $\sharp$-Mittag-Leffler (= $\Div$-Mittag-Leffler) if and only if  $M/\tor(M)$ satisfies conditions {\rm (ii) -- (vi)} with  `projective' everywhere replaced by `free.'

  \end{enumerate}
 \end{cor}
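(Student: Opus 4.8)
The plan is to read this off from Theorem \ref{thmRDOre} together with Corollary \ref{Bezout}, exactly as the paper's own remark before the corollary suggests ("Remember, by Corollary \ref{noeth}, a left countably hereditary left B\'ezout domain is already a left PID"). First I would note that a one-sided B\'ezout domain is a semi-fir (stated in Section \ref{ring}) and, by Fact \ref{Alb}, over a semi-fir projective equals free; it is also Ore (\cite[Prop.II.1.8]{S}), hence RD-Ore once we add the RD hypothesis, so Theorem \ref{thmRDOre}(1) applies and gives the equivalence of (i)--(v) for $M$ and $M/\tor(M)$. For part (1), since $R$ is a semi-fir, every countably generated projective module occurring in conditions (A)--(D) (equivalently (ii)--(v) via $M/\tor(M)$) is automatically free; thus "projective" may be replaced by "free" throughout, which is precisely what Corollary \ref{Bezout}(1) records for modules over a semi-fir. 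This yields statement (1).

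For part (2), the extra hypothesis is that $R$ is a left PID, which is in particular left (countably) hereditary, so Theorem \ref{thmRDOre}(2) kicks in and adds condition (vi), $M/\tor(M)$ is $\aleph_1$-projective, to the list. A left PID is a left $\aleph_0$-fir (indeed a left fir), so by Corollary \ref{Bezout}(2) (or by combining Remark \ref{aleph0fir} with Fact \ref{*}(2) and Fact \ref{Alb}) the word "projective" in (vi)—and in (ii)--(v)—may again be replaced by "free". Hence $M$ is $\sharp$-Mittag-Leffler iff $M/\tor(M)$ satisfies the free versions of (ii)--(vi), which is the assertion of (2). One should double-check that "$\aleph_1$-$*$-free" and "$\aleph_0$-$*$-free" make sense as stated: since over a semi-fir pure projective submodules are projective hence free, the starred conditions (C$'$), (D$'$) are the verbatim translations of (C), (D), with no ambiguity.

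The only mild subtlety—and the closest thing to an obstacle—is bookkeeping about sides: the RD property is two-sided, Ore-ness for B\'ezout domains is automatic on the relevant side, and one must make sure the "left countably hereditary = left noetherian" clause of Corollary \ref{noeth} is invoked correctly so that a left PID genuinely satisfies the hypotheses of Theorem \ref{thmRDOre}(2). Since a left PID is left noetherian by fiat, this is immediate. No genuine calculation is needed; the corollary is a transcription of the preceding theorem through the dictionary "projective $=$ free over (semi-)firs" supplied by Fact \ref{Alb} and Corollary \ref{Bezout}.
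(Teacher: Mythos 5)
Your proposal is correct and follows the paper's own route: the paper's entire proof is ``Apply Corollary \ref{Bezout},'' i.e., it reads the statement off from Theorem \ref{thmRDOre} via the dictionary projective $=$ free over semi-firs (resp.\ left $\aleph_0$-firs) supplied by Fact \ref{Alb} and Corollary \ref{Bezout}, exactly as you do. Your additional bookkeeping about Ore-ness, sides, and the left-PID case being covered by Theorem \ref{thmRDOre}(2) is accurate and just makes explicit what the paper leaves to the reader.
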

\begin{proof}
 Apply Corollary \ref{Bezout}.
\end{proof}

\begin{cor}\label{AG}
 An abelian group $M$ is $\sharp$-Mittag-Leffler  if and only if $M/\tor(M)$ is $\aleph_1$-free.
\end{cor}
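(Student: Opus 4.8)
The plan is to read off the corollary as the $R=\mathbb{Z}$ special case of what has already been assembled. First I would observe that $\mathbb{Z}$ is an RD-domain (it is a PID, hence B\'ezout, and RD-purity over $\mathbb{Z}$ is ordinary purity), and moreover a commutative Ore (indeed noetherian) domain, so it is an RD-Ore domain in the sense just defined and the torsion theory $(\To,\Tf)$ is the classical one; in particular $\T(M)=\tor(M)$ for every abelian group $M$ by Lemma~\ref{T=t}. Also, over $\mathbb{Z}$ the classes coincide: $\Div=\sharp$ and $\Tf=\flat$, with `torsion-free' $=$ `flat' and `divisible' $=$ `absolutely pure' $=$ `injective', so $\sharp$-Mittag-Leffler is the same notion discussed throughout.

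Next I would invoke Theorem~\ref{thmRDOre}: since $\mathbb{Z}$ is an RD-Ore domain, $M$ is $\sharp$-Mittag-Leffler if and only if $M/\tor(M)$ is Mittag-Leffler (conditions (i) $\Leftrightarrow$ (iii) there). Because $M/\tor(M)$ is torsion-free, hence flat over $\mathbb{Z}$, being Mittag-Leffler is the same as being \emph{flat} Mittag-Leffler. Finally, $\mathbb{Z}$ is a (left and right) principal ideal domain, hence left countably hereditary, so Corollary~\ref{Bezout}(3) (equivalently the Azumaya--Facchini corollary) applies and tells us that for the torsion-free group $M/\tor(M)$, being flat Mittag-Leffler is equivalent to being $\aleph_1$-free. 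Chaining these equivalences gives exactly the statement: $M$ is $\sharp$-Mittag-Leffler $\iff$ $M/\tor(M)$ is $\aleph_1$-free.

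Since every ingredient is already established, there is essentially no obstacle; the only thing to be careful about is citing the right specializations. One could alternatively phrase it directly via part (2) of Theorem~\ref{thmRDOre}, which already lists condition (vi), ``$M/\tor(M)$ is $\aleph_1$-projective'', as equivalent for left countably hereditary RD-Ore domains, and then note that over $\mathbb{Z}$ ``$\aleph_1$-projective'' collapses to ``$\aleph_1$-free'' (projective $=$ free for modules over a PID, so an $\aleph_1$-projective $\mathbb{Z}$-module has all its countably generated submodules free, i.e.\ is $\aleph_1$-free, and conversely). Thus the proof is simply: apply Theorem~\ref{thmRDOre}(2) (or Corollary~\ref{corBez}(2)) with $R=\mathbb{Z}$, using that over $\mathbb{Z}$ projective modules are free.

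\begin{proof}
Since $\mathbb{Z}$ is a principal ideal domain, it is a left (and right) B\'ezout domain that is RD (RD-purity over $\mathbb{Z}$ being ordinary purity), and it is noetherian, hence left countably hereditary; it is also an Ore (commutative) domain, so it is an RD-Ore domain and $\T(M)=\tor(M)$ for every abelian group $M$ by Lemma~\ref{T=t}. Over $\mathbb{Z}$ we have $\Div=\sharp$, so $\sharp$-Mittag-Leffler is the notion considered throughout. By Corollary~\ref{corBez}(2) (or Theorem~\ref{thmRDOre}(2) together with the fact that over $\mathbb{Z}$ projective$=$free, so ``$\aleph_1$-projective'' means ``$\aleph_1$-free''), $M$ is $\sharp$-Mittag-Leffler if and only if $M/\tor(M)$ is $\aleph_1$-free.
\end{proof}
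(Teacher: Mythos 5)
Your proof is correct and follows exactly the route the paper intends: Corollary \ref{AG} is stated as the immediate specialization of Corollary \ref{corBez}(2) (equivalently Theorem \ref{thmRDOre}(2) with projective $=$ free) to $R=\mathbb{Z}$, which is an RD PID, hence an RD-Ore domain with $\T=\tor$. The verifications you supply (that $\mathbb{Z}$ is RD, noetherian hence countably hereditary, and that $\Div=\sharp$ over $\mathbb{Z}$) are precisely the hypotheses the paper takes for granted at that point.
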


\begin{rem}
 While all left modules over a left noetherian ring are $\flat$-Mittag-Leffler \cite{Goo}, we conclude that this is not true for the dual notion: there are abelian groups that are not $\sharp$-Mittag-Leffler, take for instance $\mathbb{Q}$. This follows from Theorem \ref{alreadyML} and the known fact that $\mathbb{Q}$ is not Mittag-Leffler. One can verify this directly however: the same argument that shows  $\mathbb{Q}$ is not Mittag-Leffler also shows that it is not $\sharp$-Mittag-Leffler.
\end{rem}

By Corollary \ref{noeth}, a left countably hereditary RD-B\'ezout domain is  a left PID, whence the last two cases in Corollary \ref{Bezout} collapse into one once the RD property is imposed---namely, into part (2) of Corollary \ref{corBez}. Nevertheless, the hypotheses in (2) are not redundant, since there are left PID's that are not right Ore \cite[p.53]{S}; such rings  cannot be RD, for the Ore property is two-sided in RD-rings  (and  left PID's  are certainly left Ore---even left B\'ezout domains are \cite[Prop.II.1.8]{S}). 
While an RD-domain that is  B\'ezout on one side is so on the other \cite[Cor.5.5]{PPR}, it is not known whether two-sided B\'ezout domains have to be RD  \cite[Quest.6]{PPR}.  However, a left PID that is right B\'ezout is indeed RD (and Ore) \cite[Prop.5.7]{PPR}.

It is unknown whether there are any RD-domains that are not Ore  \cite[Quest.5]{PPR}. So it is unclear if $T$ and $t$ can ever  differ in an RD-domain and hence, whether Theorems \ref{thmRD} and  \ref{thmRDOre} say anything different about RD-domains. 

Cohn and Schofield found  examples of left B\'ezout and right PIDs that are not left PIDs, cf.\ end of \cite{C'} (or of the second edition of \cite{C}). Such rings are right hereditary and right noetherian (hence also Ore) RD-domains that are neither left noetherian nor left hereditary \cite[Rem.5.8]{PPR}.

\section{Concluding remarks}

One may ask what happens over \emph{one}-sided Ore domains (which are certainly not  RD). Our proofs used the purity of 
$\T(M)$, which is guaranteed over  RD-rings. For commutative domains this is  the only way. Namely, then torsion parts are  pure submodules if and only if the domain in question is Pr\"ufer  \cite[Prop.I.8.12]{FS}, hence indeed RD. What can be said about this if the ring contains zero-divisors or is no longer commutative, I do not know.


\begin{thebibliography}{99}

\bibitem[Alb]{Alb} Albrecht, F., On projective modules over semihereditary rings, Proc.\ London Math.\ Soc.\ 23 (1971).

\bibitem[AF$^\prime$]{AF'} Albrecht, U.\ and Facchini, A., Mittag-Leffler modules and semi-hereditary rings, Rend.\ Sem.\ Mat.\ Univ.\ Padova 95 (1996) 175--188.

\bibitem[AF]{AF} Azumaya, G.\ and Facchini, A., Rings of Pure Global
Dimension Zero and Mittag-Leffler Modules, J.\ Pure and Appl.\ Alg.\
 62 (1989) 109-122.

\bibitem[Bas]{Bas} Bass, H., Projective modules over free groups are free, J.\ Algebra  1(1964) 367-373.

\bibitem[BT]{BT} Braun, G.\ and Trlifaj, J., Strong submodules of almost projective modules, Pacific J.\ Math., 254(1) (2011) 73Ð87. 

\bibitem[C]{C} Cohn, P.M.,  Free rings and their relations, London Math.\ Soc.\ Monographs No.\ 2, Academic Press 1971; 2nd ed.,  London Math.\ Soc.\ Monographs No.\ 19, 1985.

\bibitem[C$^\prime$]{C'} Cohn, P.M.,  Free Ideal Rings and Localization in General Rings, new mathematical monographs, vol.\ 3, Cambridge University Press 2006.


\bibitem[DF]{DF} Dauns, J. and Fuchs, L., Torsion-freeness for rings with zero-divisors,  J. Algebra Appl.\ 3 (2004) 221-237.

\bibitem[EM]{EM} Eklof, P.\ and Mekler, A.,  Almost free modules, 2nd ed., North Holland 2002. 



\bibitem[F]{F} Fuchs, L., Infinite Abelian Groups, Academic Press 1970.

\bibitem[FS]{FS} Fuchs, L. and Salce, L., Modules over Non-Noetherian Domains, Mathematical Surveys and Monographs, vol.\  84, American Mathematical Society, Providence, RI, 2001


\bibitem[Goo]{Goo} Goodearl, K.\ R., Distributing tensor product over direct product, Pacific J. Math.\  43 (1972) 107-110.

\bibitem[Hat]{Hat} Hattori, A., A foundation of torsion theory for modules over general rings,  Nagoya Math.\ J.\ 17 (1960) 147-158. 



\bibitem[Her]{Her}Herzog, I., Elementary duality of modules,
Trans.\ A.M.S.\ 340(1) (1993) 37-69.

           
 \bibitem[L]{L} Lam, T.Y., Lectures on Modules and Rings, Graduate Texts in Math., vol.\ 189, Springer 1999.          

\bibitem[MD]{MD}  Mao, L. and Ding N, On divisible and torsionfree modules, Comm.\ Algebra 36 (2008)708-731.

\bibitem[MR]{MR} McConnel, J.C.\ and  Robson, J.C., Noncommutative Noetherian Rings, revised edition,
AMS Graduate Studies in Math., vol.\ 30, 2000.


\bibitem[P1]{P} Prest, M., {\em Model Theory and Modules,}  London Math.\ Soc.\ Lecture Note Series, vol.\ 130, Cambridge University Press 1987.

\bibitem[Pre]{Pre} Prest, M., Remarks on elementary duality, Ann.
	Pure and Appl. Logic {62} (1993)    183-205.

\bibitem[P2]{P2} Prest, M., {\em Purity, Spectra and Localisation,} Encyclopedia of Math.\ and its Applications, vol.\ 121, Cambridge University Press 2009.

\bibitem[PRZ1]{PRZ1}  Prest, M., Rothmaler, Ph., and  Ziegler, M., Absolutely Pure and Flat Modules and ``Indiscrete" Rings, J.\  Algebra 174 (1995) 349--372.

\bibitem[PRZ2]{PRZ2}  Prest, M., Rothmaler, Ph., and  Ziegler, M., Extensions of elementary duality, J.\ Pure and Appl.\ Algebra 93 (1994) 33--56.

\bibitem[PPR]{PPR} Puninski, G., Prest, M.,  and Rothmaler, Ph., Rings described by various purities, Comm.\ Algebra 27(5) (1999) 2127-2162.

\bibitem[PR]{PR} Puninski, G.\ and Rothmaler, Ph., Pure-projective modules, J.\ London Math.\ Soc.\ (2) 71(2005) 304--320.

\bibitem[RG]{RG}Raynaud, M. and Gruson, L.,  Crit\`eres de platitude et
		de projectivit\'e, Seconde partie,
	Invent. Math.\ 13(1971) 52-89.

\bibitem[R1]{habil} Rothmaler, Ph., {\em Mittag-Leffler modules and positive atomicity}, {
          Habilitationsschrift}, Chri\-stian-Albrechts-Universit\"at zu Kiel
        1994. 
        
  \bibitem[R2]{ML} Rothmaler, Ph.,   Mittag-Leffler Modules, \emph{Proc.\ of Conf.\ on Model Theory 
        and Algebra, Florence 1995}, Ann.\ of Pure and Appl.\ Logic 
     88(1997) 227--239.
        
\bibitem[R]{R} Rothmaler, Ph., 
	  Introduction to model theory, {\em Algebra, Logic and
	    Application Series}, vol.\ 15, 
	   Gordon \& Breach 2000 
	   
\bibitem[R3]{MLII} Rothmaler, Ph., 	  Mittag-Leffler modules and definable subcategories, in preparation. 

\bibitem[S]{S} Stenstr\"om, B.,  Rings of Quotients, Springer 1975.


\bibitem[W]{W} Wisbauer R., Foundations of Module and Ring Theory,
Gordon and Breach  1991.

\bibitem[W\"ur]{Wur} W\"urfel, T., \"Uber absolut reine Ringe, J.\ Reine Angew.\ Math.\ 262/263(1973) 381--391. 


\bibitem[Zim]{Zim} Zimmermann, W., Rein injektive direkte Summen von
Moduln, Comm.\ Alg. {5}(10) (1977)
1083-1117.
 
\bibitem[Z-HZ]{ZZ-H} Zimmermann-Huisgen, B. and Zimmermann, W., On the
sparsity of rings of pure global dimension zero, Trans.\ A.M.S. 320(2)
(1990) 695-711.

\end{thebibliography}
\end{document}